\documentclass[12pt]{article}
\usepackage{amsmath,amsthm}
\usepackage{amsfonts,mathrsfs}
\usepackage[colorlinks,anchorcolor=blue,citecolor=blue]{hyperref}
\usepackage[authoryear]{natbib}

\textheight 230mm \setlength{\topmargin}{-20mm}
\textwidth 160mm \setlength{\oddsidemargin}{-5mm}
\setlength{\arraycolsep}{2pt}
\allowdisplaybreaks
\theoremstyle{plain}
\newtheorem{thm}{\noindent\bf Theorem}
\newtheorem{cor}{\noindent\bf Corollary}
\newtheorem{lem}{\noindent\bf Lemma}
\newtheorem{prop}{\noindent\bf Proposition}
\theoremstyle{remark}
\newtheorem{rmk}{\noindent \bf \textit{Remark}}

\newcommand{\D}{\displaystyle}

\newcommand{\bP}{\mathbb{P}}
\newcommand{\bq}{\mathbb{Q}}
\newcommand{\bE}{\mathbb{E}}

\usepackage{ulem}
\newcounter{num}

\newcommand{\wo}{W^{(\omega)}}

\newcommand{\ho}{H^{(\omega)}}
\usepackage{enumerate}
\usepackage{ushort} 

\newcommand{\aspzero}{\hyperlink{asp:0}{$\mathbb{H}_{0}$}}
\newcommand{\aspone}{\hyperlink{asp:1}{$\mathbb{H}_{1}$}}
\newcommand{\asptwo}{\hyperlink{asp:2}{$\mathbb{H}_{2}$}}

\begin{document}

\title{On the explosion of a class of continuous-state nonlinear branching processes}


\author{
	{\sc Bo Li}\\[.01in]
	School of Mathematics and LPMC\\
	Nankai University, Tianjin 300071, PR China\\
	[.01in]\\
	{\sc Xiaowen Zhou}\\[.01in]
	Department of Mathematics and Statistics\\
	Concordia University, Montreal, Canada H3G 1M8\\
	E-mail: xiaowen.zhou@concordia.ca\\[.01in]
}

\maketitle

\begin{abstract}
In this paper, we consider a class of generalized continuous-state branching processes obtained by Lamperti type time changes of spectrally positive L\'evy processes using different rate functions.
When explosion occurs to such a process, we show that the process converges to infinity in finite time asymptotically along a deterministic curve, and identify the speed  of explosion for rate function in different regimes.
To prove the main theorems, we also establish a new asymptotic result  for  scale function of spectrally positive L\'evy process.
\end{abstract}

\noindent
{\bf Keywords}: continuous-state branching process, spectrally positive L\'evy process, Lamperti transform, explosion.  

\noindent
{\bf AMS subject classification 2020}: 60J80, 60J50.

\section{Introduction}
A continuous-state branching process is a nonnegative real-valued Markov process satisfying the additive branching property.
It arises as time-space scaling limit of discrete Bienaym\'e-Galton-Watson processes.
On the other hand, it can also be obtained by the Lamperti time change of a spectrally positive L\'evy process stopped at hitting $0$ for the first time. 
We refer to \cite{Li_Z2012} and Chapter 12 of \cite{Kyprianou2014:book:levy} for nice introductions on continuous-state branching processes.

The classical Bienaym\'e-Galton-Watson branching processes had been generalized to those with nonadditive branching mechanism; see for example, \cite{SeZu74}, \cite{Kle84}, \cite{Chen02} and \cite{Chen08}. In the same spirit,
continuous-state branching processes with nonadditive branching have been proposed in recent years. In particular, the continuous-state polynomial branching process is introduced in \cite{LiPS2018} as the unique nonnegative solution to a generalized version of the stochastic differential equation in \cite{DaLi06},
which can be identified as a continuous-state branching process with nonadditive, population dependent branching mechanism. The behaviors of extinction, explosion and coming down from infinity for such a process are discussed in \cite{LiPS2018}. A more general class of continuous-state branching processes is proposed in \cite{Foucart2019} via Lamperti type time change of stopped spectrally positive L\'evy processes using rate functions $R$ defined on $(0, \infty)$, where the classical continuous-state branching process corresponds to the linear rate function of $R(x)=x$ and the model in \cite{LiPS2018} corresponds to the rate function of $R(x)=x^\theta$.
The above continuous-state nonlinear branching processes are further generalized in \cite{LYZ19} as solutions to more general versions of the Dawson-Li equation.

For the continuous-state nonlinear branching processes, on one hand, the nonadditive branching mechanism allows richer boundary behaviors such as coming down from infinity; on the other hand, many classical techniques based on the additive branching property fail to work.
Criteria for extinction, explosion and coming down from infinity are developed in 
\cite{LiPS2018}, \cite{LYZ19} and \cite{Foucart2019}
for the respective continuous-state nonlinear branching processes via a martingale approach and fluctuation theory for spectrally positive L\'evy processes.

The speed of coming down from infinity for such processes is studied in \cite{Foucart2019} by analyzing the asymptotic behaviors of weighted occupation times for the associated spectrally positive L\'evy process. Sufficient conditions are found under which the continuous-state nonlinear branching process comes down from infinity along a deterministic curve.

For the continuous-state nonlinear branching processes introduced in \cite{Foucart2019}, explosion occurs when the process $X$ has a positive drift and the rate function increases fast enough near infinity.
In this paper we study the explosion behaviors for such a continuous-state branching process $X$. In particular, we identify the speed of explosion that is defined as the asymptotic of $X(T^+_\infty-t)$ as $t\rightarrow 0+$ for the explosion time $T^+_\infty$. We are not aware of any previous results on the speed of explosion for general Markov processes or for solutions to general stochastic differential equations with jumps. In addition, when explosion happens, using techniques from \cite{LP2018} we also express the potential measure of the process $X$ using the generalized scale functions for the associated spectrally negative L\'evy process.

To find the speed of explosion, we treat separately two classes of rate functions, the so called slow regime of rate functions that are perturbations of power functions and the fast regime of rate functions that are perturbations of exponential functions.
Our approach relies on analyzing the weighted occupation time for spectrally positive L\'evy process.
 For the process $X$ with rate function from the slow regime, given the explosion occurs we can show that the normalization of random variable $T^+_\infty-T^+_x$ converges to $1$ in the conditional probability, where $T^+_x$ denotes the first upcrossing time of level $x$. Similarly, if the rate function belongs to the fast regime, under the conditional probability of explosion the random variable $T^+_\infty-T^+_x$, after rescaling, converges in distribution to a random variable whose distribution can be specified using functionals of spectrally positive L\'evy process. The convergence results in both cases lead to an asymptotics on the running maximum of the process near the explosion time. By comparing values of the associated spectrally positive L\'evy process with its running maximum, we can
show that for rate functions in both regimes the explosion occurs in an asymptotically deterministic fashion. In particular, in the fast regime the speed of explosion is asymptotically proportional to $-\log t$ as time $t\rightarrow 0+$.

Some parts of our approach resemble those in \cite{Foucart2019} and in \cite{Bansaye16} for studying the coming down from infinity behaviors of the respective processes.
But an additional difficulty emerges in our work due to the overshoot when the nonlinear branching process first upcrosses a level $x$ at time $T^+_x$. 
We remark that this difficulty seems to be essential and we do not see schemes such as time reversal can easily get around the problem caused by overshoot.

To overcome this difficulty, for the associated spectrally positive L\'evy process we identify the Laplace transform of its stationary overshoot distribution, and we obtain a new asymptotic result on the corresponding scale function. For the case of fast regime, instead of showing the convergence of Laplace transform for the weighted occupation time as in \cite{Foucart2019}, we apply the occupation density theorem to the weighted occupation time and the properties of regularly varying functions to show the almost sure convergence that eventually leads to the desired convergence in law.

We also want to point out that our condition on rate function, \asptwo\ in Section 3, for the convergences of the rescaled explosion time  can be more general than those in \cite{Foucart2019}.

The rest of the paper is arranged as follows. In Section 2 we first introduce some preliminary results on spectrally positive L\'evy processes and the associated scale functions together with the exit problems and the weighted occupation times. The continuous-state nonlinear branching processes are also defined via the Lamperti type transforms in this section. The main results are presented in Section 3.  All the proofs are deferred to Section 4. Several intermediate results are also posed and proved in Section 4.


\section{Spectrally positive L\'evy processes and continuous-state nonlinear branching processes}

Let $\xi$ be a spectrally positive L\'evy process (SPLP), that is a real-valued stochastic process with stationary independent increments and with no negative jumps, defined on a filtered probability space $(\Omega,\mathscr{F}, (\mathscr{F}_{t})_{t\geq0}, \bP)$.
Its Laplace exponent is well-defined and of the L\'{e}vy-Khintchine form,
i.e. for $s\geq0$,
\begin{equation*}
\begin{split}
\D \psi(s)
&:=t^{-1}\log\bE\big(\exp(-s \xi_{t})\big)\\
&=\frac{\sigma^{2}}{2} s^{2}- \mu s
 +\int_{(0, \infty)}\big(e^{-s x}-1+ s x\mathbf{1}(x<1) \big) \Pi (d x),
\end{split}
\end{equation*}
where $\mu\in\mathbb{R}$, $\sigma\geq 0$ and the L\'{e}vy measure $\Pi$ is a $\sigma$-finite measure on $(0,\infty)$ such that $\int_{0}^{\infty} (1\wedge x^{2}) \Pi (dx)<\infty$.
It is well-known that $\psi(\cdot)$ is continuous and strictly convex on $[0,\infty)$,
its right inverse is defined by $\Phi(t):=\sup\{s\geq0, \psi(s)=t\}$.

Denote by $\bP_{x}$ the probability law of $\xi$ for $\xi_{0}=x$,
and write $\bP$ when $\xi_{0}=0$.
We denote throughout this paper
\begin{equation}\label{eqn:defn:c}
p:=\Phi(0)\quad\text{and}\quad
\gamma:=\bE(\xi_{1})=-\psi'(0).
\end{equation}
Notice that $\gamma<\infty$ if and only if $\int_{0}^{\infty}(x\wedge x^{2})\Pi(dx)<\infty$, and $\gamma=\int_{1}^{\infty}x\Pi(dx)+\mu$.
If $p>0$, then $\psi'(0)\in(0,\infty]$, $\xi$ is transient and goes to $\infty$ as $t\rightarrow\infty$, and the following result holds.
\begin{lem}\label{lem:5}
Write $\D\bar{\xi}_{t}:=\sup_{0\leq s\leq t} \xi_s$ for the running maximum of $\xi$. 
If $p>0$, we have
\[\xi_{t}/\bar{\xi}_{t}\underset{t\to\infty}{\longrightarrow} 1
\quad\text{$\bP$-a.s. and}\quad
\inf_{s>t}{\xi_{s}}/{\xi_{t}}\underset{t\to\infty}{\longrightarrow} 1
\quad\text{in}\,\, \bP.\]
\end{lem}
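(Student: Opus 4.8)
The plan is to exploit the standard fluctuation theory for spectrally positive L\'evy processes, which I will apply to the dual (spectrally negative) process $\hat\xi := -\xi$. Since $p=\Phi(0)>0$ means $\psi'(0)>0$, i.e.\ $\mathbb E(\xi_1)>0$ whenever the mean exists, and more precisely $\xi_t\to+\infty$ $\bP$-a.s.\ (this is exactly the drift-to-$+\infty$ regime, characterized by $\Phi(0)>0$ for a process which is not a subordinator in the relevant direction). First I would establish the first assertion, $\xi_t/\bar\xi_t\to1$ a.s. The key is that $\xi$ has \emph{no negative jumps}, so between the times at which new maxima are recorded the process can only move down continuously-in-the-upward-direction; more usefully, the reflected process $\bar\xi_t-\xi_t$ and the last time $\xi$ was at its current running maximum are controlled by the descending ladder structure. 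Concretely, let $g_t:=\sup\{s\le t:\xi_s=\bar\xi_s\}$ be the last time before $t$ that $\xi$ attains its running maximum. Then $\bar\xi_t=\xi_{g_t}$ and $\bar\xi_t-\xi_t=(\xi_{g_t}-\xi_t)$ is the current depth of the excursion of $\xi$ below its maximum. Because $\xi_t\to\infty$, we have $\bar\xi_t\to\infty$, so it suffices to show $\bar\xi_t-\xi_t=o(\bar\xi_t)$, or even just that $\liminf_t \xi_t/\bar\xi_t\ge 1$ (the reverse inequality is trivial).

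For that I would argue along the sequence of record times. Let $T_n^+$ denote the first passage time of $\xi$ above level $n$ (using the notation $T^+_x$ already introduced in the paper). Since $\xi$ is spectrally positive and transient to $+\infty$, each $T_n^+<\infty$ a.s., and $\bar\xi_{T_n^+}=n$ exactly because there is no overshoot upward would be false for SPLP — wait, SPLP jumps \emph{up}, so there \emph{is} an overshoot; instead I should pass through the \emph{minimum} after $T_n^+$. The cleaner route: by the strong Markov property and stationarity, after each passage time the post-infimum decomposition of $\xi$ shifted to start at its eventual running-maximum-so-far has a law not depending on $n$. The deepest subsequent dip $\inf_{s\ge T_n^+}\xi_s - \xi_{T_n^+}$ is, by spatial homogeneity and the Markov property, an i.i.d.-like sequence (strictly, it is governed by the overall infimum of $\xi$ from a fixed start, which is a.s.\ finite since $\xi\to\infty$); hence $\bar\xi_t - \inf_{s\ge t}\xi_s$ stays $O(1)$ along these epochs while $\bar\xi_t\to\infty$, giving both $\xi_t/\bar\xi_t\to1$ a.s.\ and a first handle on the second claim. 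I would phrase this via: for $\epsilon>0$, $\bP\big(\xi_t/\bar\xi_t<1-\epsilon\text{ i.o.}\big)=0$ by Borel--Cantelli applied to the events $\{\inf_{s\ge T_n^+}\xi_s < (1-\epsilon)n\}$, whose probabilities are summable because $\inf_{s\ge 0}(\xi_s - \xi_0)$ has exponential-type tails (its law is governed by $\Phi$; in fact $\bP_x(\inf_s\xi_s\le 0) = e^{-p\,x\,?}$ up to the correct constant via the Wiener--Hopf factor, which decays geometrically in $n$).

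For the second assertion, $\inf_{s>t}\xi_s/\xi_t\to1$ in probability: fix $\epsilon>0$; we need $\bP(\inf_{s>t}\xi_s < (1-\epsilon)\xi_t)\to0$. Split on the event $\{\xi_t \ge (1-\epsilon/2)\bar\xi_t\}$, which has probability $\to1$ by the first part, and on that event $\inf_{s>t}\xi_s \ge \bar\xi_t - D_t$ where $D_t:=\bar\xi_t - \inf_{s>t}\xi_s$ is the total subsequent drawdown from the running max; using the strong Markov property at $g_t$ and stationarity, $D_t$ is stochastically dominated by $-\inf_{s\ge0}(\xi_s-\xi_0)=:I$, a fixed a.s.-finite random variable. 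Then $\inf_{s>t}\xi_s/\xi_t \ge (\bar\xi_t - I)/\bar\xi_t \to 1$ in probability since $\bar\xi_t\to\infty$ a.s. Putting the two events together closes the argument.

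The main obstacle I anticipate is handling the upward overshoots correctly: because $\xi$ is only \emph{spectrally positive}, first passage \emph{upwards} is by a jump, so one cannot directly say $\bar\xi_{T_n^+}=n$, and the clean ladder-height structure is on the descending side. The fix, which I would want to get exactly right, is to organize everything in terms of the descending ladder process / the overall post-$t$ infimum $\inf_{s\ge t}\xi_s$ rather than the ascending passage times — i.e.\ use that $\bar\xi_t - \inf_{s\ge t}\xi_s$ is tight (indeed has light tails, dominated by $-\inf_{s\ge0}\xi_s$ under $\bP_0$, whose Laplace transform is explicit from the Wiener--Hopf factorization for SPLP), and that $\bar\xi_t\to\infty$. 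The only genuinely technical point is the summability estimate $\sum_n \bP(-\inf_{s\ge0}\xi_s > \epsilon n)<\infty$ needed for Borel--Cantelli in the almost-sure statement; this follows from the exponential tail of $-\inf_{s\ge0}\xi_s$, which in turn follows from $p=\Phi(0)>0$, since the Laplace exponent $\psi$ being strictly convex with $\psi(p)=0$, $p>0$, forces the infimum to be exponentially integrable.
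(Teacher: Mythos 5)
Your proposal is correct in substance, but it proves the lemma by a genuinely different route from the paper. The paper's proof of the a.s.\ assertion uses It\^o excursion theory for the reflected process $\bar{\xi}-\xi$: it counts the excursions whose height exceeds $\varepsilon$ times the running maximum at their starting level, evaluates the excursion measure via $n(\bar{\epsilon}>y\mid\bar{\epsilon}>1)=W(1)/W(y)$ together with $W(y)\geq W_{p}(1)e^{py}$, and shows the expected number of such excursions is finite using the Laplace exponent of the ladder process. You replace all of this by a Borel--Cantelli argument along the passage times $\tau_{n}^{+}$ over integer levels: by the strong Markov property, $\xi(\tau_{n}^{+})\geq n$ (the upward overshoot only helps), and the $q=0$ case of \eqref{exit_Lap} gives $\bP\big(\inf_{s\geq\tau_{n}^{+}}\xi_{s}<(1-\varepsilon)n\big)\leq\bP\big(-\inf_{s\geq0}\xi_{s}>\varepsilon n\big)=e^{-p\varepsilon n}$, which is summable; interpolating (if $n<\bar{\xi}_{t}$ then $\tau_{n}^{+}\leq t$, so eventually $\xi_{t}\geq(1-\varepsilon)(\bar{\xi}_{t}-1)$, and $\bar{\xi}_{t}\to\infty$) yields $\liminf_{t}\xi_{t}/\bar{\xi}_{t}\geq1-\varepsilon$ a.s. This is more elementary than the paper's argument — it needs only the exponential tail of $-\inf_{s\geq0}\xi_{s}$ coming from $p>0$ and no excursion machinery — while the paper's computation with the excursion measure and ladder subordinator is the template it reuses elsewhere; both deliver the same conclusion.

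Two small repairs. First, do not appeal to the strong Markov property at $g_{t}$: the last time at the maximum is not a stopping time. It is also unnecessary: for the convergence in probability it suffices to apply the (ordinary) Markov property at the deterministic time $t$, writing $\inf_{s>t}\xi_{s}/\xi_{t}=1+\xi_{t}^{-1}\inf_{s>t}(\xi_{s}-\xi_{t})$, where $-\inf_{s>t}(\xi_{s}-\xi_{t})$ has the fixed law of $I:=-\inf_{s\geq0}\xi_{s}$ and $\xi_{t}\to\infty$ a.s.; this is exactly the paper's one-line argument for the second assertion. Relatedly, $D_{t}=\bar{\xi}_{t}-\inf_{s>t}\xi_{s}$ is \emph{not} stochastically dominated by a single copy of $I$ (it dominates one), but it is dominated by the sum of two independent copies, and tightness is all you need. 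Second, with the paper's convention $\gamma=-\psi'(0)=\bE(\xi_{1})$, the condition $p>0$ corresponds to $\psi'(0)<0$, not $\psi'(0)>0$; the fact you actually use, namely $\xi_{t}\to\infty$ $\bP$-a.s.\ when $p>0$, is correct.
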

\begin{rmk}
If $\psi'(0)=0$, then $\xi$ oscillates and $\D \limsup_{t\to\infty} \xi_{t}/\bar{\xi}_{t}=1$\,\, $\bP$-a.s.
\end{rmk}

For $q\geq 0$, the $q$-scale function $W^{(q)}$
is a continuous and increasing function on $[0,\infty)$
with $W^{(q)}(x)=0$ for $x<0$,
which satisfies
\[
\int_{0}^{\infty} e^{-sy} W^{(q)}(y)\,dy=\frac{1}{\psi(s)-q} \quad\text{for $s>\Phi(q)$}.
\]
We write $W(x)=W^{(0)}(x)$ when $q=0$.
Define the first passage times of $\xi$ as
\[
\tau_{x}^{-}:=\inf\{t>0, \xi_{t}<x\}
\quad\text{and}\quad
\tau_{x}^{+}:=\inf\{t>0, \xi_{t}>x\}
\]
with the convention $\inf\emptyset=\infty$.
Given the scale function,
the following first passage results can be found in section 8.1 and 8.2 of \cite{Kyprianou2014:book:levy},
for $q\geq0$ and $c<x<b$
\begin{equation}\label{exit_Lap}
\bE_{x}\big(e^{-q\tau_{c}^{-}}; \tau_{c}^{-}<\tau_{b}^{-}\big)
= \frac{W^{(q)}(b-x)}{W^{(q)}(b-c)}
\quad\text{and}\quad
\bE_{x}\big(e^{-q\tau_{c}^{-}}\big)=e^{-\Phi(q)(x-c)}
\end{equation}
with the convention $e^{-\infty}=0$.
The potential measure of $\xi$ killed upon leaving interval $[0,\infty)$ is given by
\begin{equation}\label{eqn:resolvent}
\begin{array}{rl}
U(x,dy):=&\D \int_{0}^{\infty}\bP_{x}(\xi_{t}\in dy; t<\tau_{0}^{-}) dt\\
=&\D \big(e^{-p x}W(y)-W(y-x)\big) dy
=:u(x,y)\,dy
\quad\text{for $x,y>0$}.
\end{array}
\end{equation}

Change of measure is another useful tool for the fluctuation theory of L\'evy processes.
For $\alpha\in\mathbb{R}$ with $\psi(\alpha)<\infty$, the process $(e^{-\alpha\xi_{t}- \psi(\alpha) t})_{t\geq0}$ is a martingale under $\bP$.
Define the probability measure $\bP^{(\alpha)}$ by
\[
\left.\frac{d \bP^{(\alpha)}}{d \bP} \right|_{\mathscr{F}_t}= e^{-\alpha \xi_t-\psi(\alpha) t} \quad\text{for $t>0$}.
\]
It is well-known that $\xi$ is still a SPLP under $\bP^{(\alpha)}$. The associated Laplace exponent and scale functions under $\bP^{(\alpha)}$ are denoted similarly with subscript $\alpha$.
A direct calculation shows that
\[
\psi_{\alpha}(s)= \psi(\alpha+ s)- \psi(\alpha)\quad\text{and}\quad
\Phi_{\alpha}(s)=\Phi(\psi(\alpha)+s)- \alpha\quad \text{for}\quad s\geq0,
\]
and $W_{\alpha}^{(q)}(x)= e^{-\alpha x} W^{(q+\psi(\alpha))}(x)$.
In particular,
\begin{equation}\label{scale_limit}
W_{p}(y)=e^{-py}W(y)\uparrow W_{p}(\infty)=\Phi'(0).
\end{equation}
We refer to \cite{Kuznetsov2012:scalefunction} and \cite{Hubalek2011:scalefunction:examples} for a more detailed discussions and examples of scale functions.

The following limiting result on the resolvent density in \eqref{eqn:resolvent} is useful in this paper, and we refer to Theorem I.21 of \cite{Bertoin96:book} for a similar result called ``renewal theorem''.
\begin{lem} \label{lem:1}
If $p,\gamma\in(0,\infty)$, we have for any $k\in\mathbb{R}$,
\[
\lim_{y\to\infty}(e^{-px}W(x+y)-W(y))=\frac{1-e^{-px}}{\gamma}
\]
uniformly for all $x\in [k,\infty)$. Therefore,
\[\lim_{y,x\to\infty} \big(e^{-px}W(x+y)-W(y)\big)=1/\gamma.\]
\end{lem}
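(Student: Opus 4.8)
The plan is to reduce the two-variable statement to a one-variable asymptotic and then prove the latter by uncovering a renewal structure inside the scale function. Set $V(z):=\Phi'(0)e^{pz}-W(z)$ for $z\ge 0$. Using $W(z)=e^{pz}W_p(z)$ from \eqref{scale_limit}, a direct rearrangement gives, for all $x,y$ with $x+y\ge 0$,
\[
e^{-px}W(x+y)-W(y)=e^{py}\bigl(W_p(x+y)-W_p(y)\bigr)=V(y)-e^{-px}\,V(x+y),
\]
and $V(z)=e^{pz}\bigl(\Phi'(0)-W_p(z)\bigr)\ge 0$. Thus the whole lemma follows once I show $\lim_{z\to\infty}V(z)=1/\gamma$: granting this, for $x\in[k,\infty)$ one has $x+y\ge k+y$, hence
\[
\Bigl|\,e^{-px}W(x+y)-W(y)-\tfrac{1-e^{-px}}{\gamma}\,\Bigr|\le\bigl|V(y)-\tfrac1\gamma\bigr|+e^{-pk}\sup_{w\ge k+y}\bigl|V(w)-\tfrac1\gamma\bigr|\xrightarrow[y\to\infty]{}0
\]
uniformly in $x\ge k$ (the right-hand side is free of $x$ and $k+y\to\infty$), and the final display of the lemma follows from $(1-e^{-px})/\gamma\to 1/\gamma$ as $x\to\infty$.

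To prove $V(z)\to 1/\gamma$, I would pass to the Lebesgue--Stieltjes measure $U_p$ of the nondecreasing function $W_p$ on $[0,\infty)$. By \eqref{scale_limit} it is finite with $U_p([0,\infty))=\Phi'(0)$, so that $\Phi'(0)-W_p(z)=U_p\bigl((z,\infty)\bigr)$, and integrating $\int_0^\infty e^{-\lambda z}W_p(z)\,dz=1/\psi(p+\lambda)$ by parts gives $\int_{[0,\infty)}e^{-\lambda z}\,U_p(dz)=\lambda/\psi(p+\lambda)$ for $\lambda>0$. Hence the tilted measure $\rho(dz):=e^{pz}U_p(dz)$ satisfies $\int_{[0,\infty)}e^{-\mu z}\rho(dz)=(\mu-p)/\psi(\mu)$ for $\mu>p$. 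Since $p>0$, the spectrally negative process $-\xi$ drifts to $-\infty$, and its Wiener--Hopf factorization $\psi(\mu)=(\mu-p)\,\widehat\kappa(\mu)$ (see Chapter~8 of \cite{Kyprianou2014:book:levy}) exhibits $\widehat\kappa(\mu)=\psi(\mu)/(\mu-p)$ as the Laplace exponent of a non-killed subordinator $\widehat H$; a one-line computation from $\psi(0)=0$ and $\psi'(0)=-\gamma$ yields $\bE[\widehat H_1]=\widehat\kappa'(0+)=\gamma/p\in(0,\infty)$. Thus $\rho$ is exactly the renewal measure of $\widehat H$. Writing, for $z\ge 0$,
\[
V(z)=e^{pz}\,U_p\bigl((z,\infty)\bigr)=e^{pz}\!\int_{(z,\infty)}\!e^{-py}\,\rho(dy)=\int_{(0,\infty)}\!e^{-pw}\,\rho(z+dw),
\]
the renewal theorem for subordinators (Theorem~I.21 of \cite{Bertoin96:book}) gives $\rho(z+dw)\to(p/\gamma)\,dw$ as $z\to\infty$; combined with the uniform bound $\sup_{z\ge0}\rho\bigl((z,z+1]\bigr)\le\rho([0,1])<\infty$, which dominates the tail $\int_M^\infty e^{-pw}\rho(z+dw)$ uniformly in $z$, this yields $V(z)\to\int_0^\infty e^{-pw}(p/\gamma)\,dw=1/\gamma$, as required.

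I expect the substantive step to be the identification of $\rho$ as the renewal measure of a subordinator of mean $\gamma/p$: one must verify that $\psi(\mu)/(\mu-p)$ is an honest Laplace exponent with vanishing killing term, which is exactly where the spectral negativity of $-\xi$ and the Wiener--Hopf factorization are used. The remaining points are routine: rewriting $e^{-px}W(x+y)-W(y)$ through \eqref{scale_limit}, and upgrading the vague (Blackwell-type) convergence supplied by the renewal theorem to convergence of the integral against the unbounded-support weight $e^{-pw}$, which is handled by the stated translation bound on $\rho$. In the degenerate case where $\widehat H$ is a lattice-supported compound Poisson subordinator the lattice form of the renewal theorem applies verbatim, without affecting the conclusion.
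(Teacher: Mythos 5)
Your proof is correct, but it takes a genuinely different route from the paper's. The paper argues probabilistically: it combines the point-hitting identity $\bP(\tau^{\{y\}}<\tau_{-x}^{-})=\big(e^{-px}W(y+x)-W(y)\big)/W_{p}(y+x)$ (quoted from Lemma 3.1 of \cite{LZ:localtime}) with the strong Markov property, and then lets $y\to\infty$ using Lemma \ref{lem:6}, i.e.\ the overshoot convergence $\bE\big(e^{-p(\xi(\tau_{y}^{+})-y)}\big)\to(\gamma\Phi'(0))^{-1}$, the uniformity in $x\in[k,\infty)$ coming from $W_{p}\uparrow\Phi'(0)$. You instead stay on the analytic side: the decomposition $e^{-px}W(x+y)-W(y)=V(y)-e^{-px}V(x+y)$ with $V(z)=e^{pz}\big(\Phi'(0)-W_{p}(z)\big)$ reduces the whole statement, uniformity included, to the single limit $V(z)\to 1/\gamma$, which you get by identifying $e^{pz}\,dW_{p}(z)$ through its Laplace transform as the renewal measure of the ladder-height subordinator with exponent $\psi(\mu)/(\mu-p)$ and mean $\gamma/p$, and then applying Blackwell's renewal theorem (Theorem I.21 of \cite{Bertoin96:book}) with a tail domination for the weight $e^{-pw}$. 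Both arguments ultimately rest on the same ladder-height renewal structure (the paper's Lemma \ref{lem:6} is built from the very same exponent $\psi(\beta)/(\beta-p)$), but your version is more self-contained and directly realizes the ``renewal theorem'' analogy the paper itself mentions before the lemma; the paper's version has the side benefit of producing the exact identity \eqref{eqn:lem1:1}, which is reused later (for instance for the bound $u(x,y)\le W_{p}(y)$ in the proof of Theorem \ref{thm:4}).

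Two small points to tighten. First, the bound $\sup_{z\ge0}\rho\big((z,z+1]\big)\le\rho\big([0,1]\big)$ deserves a word of justification (strong Markov property of the subordinator at its first passage above level $z$, the standard subadditivity of renewal measures). Second, your closing sentence about the lattice case is the one inaccurate spot: if the ladder-height subordinator were genuinely lattice compound Poisson, Blackwell's theorem would only give convergence of $V(z)$ along the lattice, and the limit over all real $z\to\infty$ need not exist. The case is in fact vacuous here: the renewal measure you identified is $e^{pz}\,dW_{p}(z)$, and since $W$ is continuous and strictly increasing on $(0,\infty)$ this measure charges every interval and cannot be supported on a lattice; it is cleaner to rule the case out this way than to assert that the lattice form of the renewal theorem gives the same conclusion.
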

\begin{rmk}
By change of measure, we obtain the following general result
where a light-tailed condition on $\Pi$ is required.
For $q\geq0$, let $\phi(q)$ be the left-root of $t\to \psi(t)-q$ and $\psi'(\phi(q))\in(-\infty,0)$, 
that is that is $\phi(q)<\Phi(q)$ with $\psi(\phi(q))=q$, then $\phi(q)\leq 0$ and
\[
e^{-\phi(q) y}\big(e^{-\Phi(q)x}W^{(q)}(x+y)-W^{(q)}(y)\big)
\underset{y\to\infty}{\longrightarrow}
\frac{e^{(\phi(q)-\Phi(q))x}-1}{\psi'(\phi(q))}
\]
where $-\phi(q)$ is also known as the unique nonnegative root of the Cram\'er-Lundberg equation $\psi(-t)=q$ in risk theory.
\end{rmk}

The proof is based on the following result used in \cite{Doring2015:integral},
see also Theorem 5.7 of \cite{Kyprianou2014:book:levy} and \cite{BERTOIN199965}.
If $\gamma\in(0,\infty)$ then
\begin{equation}
\bP\big(\xi(\tau_{y}^{+})-y\in dz\big)
\underset{y\to\infty}{\Longrightarrow}
 \rho(dz)
\end{equation}
for some non-degenerate weak limit $\rho$ on $[0,\infty)$,
called \textit{the stationary overshoot distribution} in \cite{Doring2015:integral},
which is characterized in the following lemma, see also Lemma 3 of \cite{BERTOIN2011} for L\'evy process in a half-line.
\begin{lem}\label{lem:6}
If $p,\gamma\in(0,\infty)$, we have for $s\geq0$
\begin{equation}
\widehat{\rho}(s):=
\int_{0-}^{\infty}e^{-sz}\rho(dz)
=\frac{p\psi(s)}{\gamma s(s-p)}.\label{eqn:defn:rho}
\end{equation}
In particular, $\widehat{\rho}(p)=(\gamma \Phi'(0))^{-1}$.
\end{lem}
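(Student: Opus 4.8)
\emph{Proof proposal.} I would prove this by identifying $\rho$ as the equilibrium (stationary‐excess) law of the ascending ladder height subordinator of $\xi$, and then making that subordinator explicit from the absence of negative jumps together with \eqref{exit_Lap}. Since $\gamma\in(0,\infty)$ the process drifts to $+\infty$, so $\tau_y^+<\infty$ a.s.\ for every $y$, and as $z\mapsto e^{-sz}$ is bounded and continuous the weak convergence of the overshoot recalled above gives $\widehat\rho(s)=\lim_{y\to\infty}\bE\big(e^{-s(\xi(\tau_y^+)-y)}\big)$. Let $H$ be the ascending ladder height process of $\xi$; since $\overline\xi_\infty=+\infty$ it is a genuine subordinator, with Laplace exponent $\kappa$. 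The first passage of $\xi$ above $y$ occurs exactly when $H$ first exceeds $y$ (at that instant $\xi$ sits at a record), so $\xi(\tau_y^+)-y$ is precisely the overshoot of the subordinator $H$ over level $y$; the renewal theorem for subordinators (Theorem~5.7 of \cite{Kyprianou2014:book:levy} and the references already cited) then identifies $\rho$ as the equilibrium law of $H$, whose Laplace transform is $\kappa(s)/(\kappa'(0+)\,s)$, the hypothesis $\gamma<\infty$ being what guarantees $\kappa'(0+)\in(0,\infty)$. This reduces the lemma to the two facts
\[
\widehat\rho(s)=\frac{\kappa(s)}{\kappa'(0+)\,s}\qquad\text{and}\qquad \kappa(\lambda)=\frac{\psi(\lambda)}{\lambda-p}\ \ \text{up to normalisation}.
\]

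For the form of $\kappa$ I would argue that, $\xi$ having no negative jumps, it attains every new running infimum continuously, so its descending ladder height process is a pure drift killed at the rate at which $\xi$ leaves its infimum forever; by \eqref{exit_Lap}, $\bP(-\underline\xi_\infty>x)=e^{-px}$, i.e.\ $-\underline\xi_\infty\sim\mathrm{Exp}(p)$, so that rate is $p=\Phi(0)$ and the descending ladder height exponent equals $\lambda+p$ up to a positive local‐time constant. Plugging this into the Wiener–Hopf factorisation of $\xi$ gives $\psi(\lambda)=c\,\kappa(\lambda)(\lambda-p)$ for some $c>0$; since $\psi(p)=0$, $\psi(\lambda)/(\lambda-p)$ is finite and $C^1$ on $[0,\infty)$, and absorbing $c$ into the free local‐time normalisation for $H$ I may take $\kappa(\lambda)=\psi(\lambda)/(\lambda-p)$ — a genuine Bernstein function, with $\kappa(0+)=0$ (no killing, as expected from $\overline\xi_\infty=+\infty$) and $\kappa'(0+)=-\psi'(0)/p=\gamma/p\in(0,\infty)$ by the quotient rule at $0$. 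Substituting back gives $\widehat\rho(s)=p\psi(s)/(\gamma s(s-p))$, which is \eqref{eqn:defn:rho} (the values at $s=0$ and $s=p$ read off by continuity); and differentiating $\psi\circ\Phi=\mathrm{id}$ at $0$, which gives $\psi'(p)\Phi'(0)=1$, yields $\widehat\rho(p)=\psi'(p)/\gamma=(\gamma\Phi'(0))^{-1}$.

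The only genuinely delicate point, I expect, is the identification $\kappa(\lambda)=\psi(\lambda)/(\lambda-p)$: one must invoke the Wiener–Hopf factorisation in the spectrally positive case and keep the local‐time normalisation constants under control (they cancel in $\kappa(s)/(\kappa'(0+)s)$, but this deserves a line). If one prefers to stay inside Section~2 and bypass the ladder machinery, the alternative is to compute $\bE\big(e^{-s(\xi(\tau_y^+)-y)}\big)$ for finite $y$ by applying the compensation formula to the jumps of $\xi$ straddling level $y$ — which brings in the $0$‑potential measure of $\xi$ killed on first passage above $y$, of density $e^{-pu}W(y)-W(y-u)$ after the natural change of variables, plus a creeping term $\tfrac{\sigma^2}{2}\big(W'(y)-pW(y)\big)$ — and then let $y\to\infty$ via Lemma~\ref{lem:1}; there the obstacle moves to justifying the passage to the limit under the integral (one needs the uniform bound $e^{-pu}W(y)-W(y-u)\le Bu$, extracted from a refinement of Lemma~\ref{lem:1}), after which the resulting integral against $\Pi$ collapses to $p\psi(s)/(\gamma s(s-p))$ precisely because $\psi(p)=0$.
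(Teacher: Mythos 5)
Your proposal is correct and follows essentially the same route as the paper: identify the overshoot of $\xi$ over a level with that of its ascending ladder height subordinator, whose Laplace exponent is (a version of) $\psi(\beta)/(\beta-p)$, and then apply the stationary-overshoot/renewal result for subordinators (Theorem 5.7 of \cite{Kyprianou2014:book:levy}), with $\kappa'(0+)=\gamma/p$ giving \eqref{eqn:defn:rho}. The only cosmetic difference is that you re-derive the ladder exponent via Wiener--Hopf and the killed-drift descending ladder, whereas the paper simply cites Theorem VII.4 of \cite{Bertoin96:book} and writes the equilibrium law in terms of the drift and L\'evy measure of the ladder height; these are equivalent.
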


\

The continuous-state nonlinear Branching process $X$ considered in this paper is defined in \cite{LiPS2018} by time changing a spectrally positive L\'evy process.
More precisely, for a function $R(\cdot)$ on $(0,\infty)$, which is positive and locally bounded away from $0$, define an additive functional
\begin{equation}\label{eqn:integral}
\eta(t):=\int_{0}^{t}\frac{1}{R(\xi_{s})}\,ds
\quad\text{for $t<\tau_{0}^{-}$},
\end{equation}
and $\eta(\infty):=\lim_{t\to\infty}\eta(t)$ on the event $\{\tau_{0}^{-}=\infty\}$. 
Its right inverse function is defined as $\D \eta^{-1}(t):=\inf\{s>0, \eta(s)>t\}$ for $t<\eta(\tau_{0}^{-})$.
Then the process $X$ is defined, stopped at time $\eta(\tau_{0}^{-})\leq\infty$, by letting $\D X_{t}:=\xi(\eta^{-1}(t))$ for $t\in [0,\eta(\tau_{0}^{-}))$. It is true that $X$ is a well-defined positive-valued Markov process with absorbing states $\{0,\infty\}$.

Define the first passage times of $X$ by
\[
T_{x}^{-}:=\inf\{t>0, X_{t}<x\},
\quad
T_{x}^{+}:=\inf\{t>0, X_{t}>x\},
\]
for $x\in(0,\infty)$ and
\[\D T_{0}^{-}=\inf\{t>0, X_{t}=0\},\quad T_{\infty}^{+}=\inf\{t>0, X_{t}=\infty\},\]
with the convention $\inf\emptyset=\infty$.
 The following identities on the first passage times follow immediately from the Lamperti type transform. For any $x>0$ we have
 \begin{equation}\label{eqn:passagetimes}
T_{x}^{+}=\eta(\tau_{x}^{+})
\text{\ on the event $\{\tau_{x}^{+}<\tau_{0}^{-}\}$ and \ }
T_{x}^{-}=\eta(\tau_{x}^{-})
\text{\ on the event $\{\tau_{x}^{-}<\infty\}$}.
\end{equation}
In addition, for the absorbing time $\eta(\tau_{0}^{-})$, we have 
\[\eta(\tau_{0}^{-})
=\bigg\{\begin{array}{l@{\text{\quad on the event\quad }}l}
T_{0}^{-}=\eta(\tau_{0}^{-}) & \{\tau_{0}^{-}<\infty\},\\
T_{\infty}^{+}=\eta(\infty) & \{\tau_{0}^{-}=\infty\}.
\end{array}\]

More precisely, at $\eta(\tau_{0}^{-})$, 
the process $X$ \textit{becomes extinct} at the finite time $T_{0}^{-}=\eta(\tau_{0}^{-})$ with $X(T_{0}^{-})=0$ on the event $\{\eta(\tau_{0}^{-})<\infty, \tau_{0}^{-}<\infty\}$;
it \textit{extinguishes} when $\D\lim_{t\to\infty}X(t)=0$ on the event $\{\eta(\tau_{0}^{-})=\infty, \tau_{0}^{-}<\infty\}$;
it \textit{explodes} at the finite time $T_{\infty}^{+}=\eta(\infty)$ with $X(T_{\infty}^{+})=\infty$ on the event $\{\eta(\infty)<\infty,\tau_{0}^{-}=\infty\}$;
and it \textit{drifts to infinity} when $\D\lim_{t\to\infty}X(t)=\infty$ on the event $\{\eta(\infty)=\infty,\tau_{0}^{-}=\infty\}$.
 $\D T_{0}^{-}$ is called the \textit{extinction time} of $X$ if $\D T_{0}^{-}<\infty$, and $\D T_{\infty}^{+}$ is called the \textit{explosion time} of $X$ if $\D T_{\infty}^{+}<\infty$.

We first characterize the extinction and explosion conditions for the process $X$ using integral tests. Note that similar results are obtained in \cite{LiPS2018} for power function $R$.

\begin{prop}\label{cor:1}
Extinction occurs for the process $X$ with a positive probability, that is
\[
\bP_{x}(\eta(\tau_{0}^{-})<\infty, \tau_{0}^{-}<\infty)>0
\quad\text{if and only if}\quad
\int_{0+}\frac{W(z)}{R(z)}\,dz<\infty
\]
Moreover, in this case, $\bP_{x}(\eta(\tau_{0}^{-})<\infty|\tau_{0}^{-}<\infty)=1$ for all $x>0$.

If $p,\gamma\in(0,\infty)$, the process $X$ explodes with a positive probability, that is
\[
\bP_{x}(\eta(\infty)<\infty, \tau_{0}^{-}=\infty)>0
\quad\text{if and only if}\quad
\int^{\infty}{\frac{1}{R(z)}}\,dz<\infty
\]
Moreover, in this case, $\bP_{x}(\eta(\infty)<\infty|\tau_{0}^{-}=\infty)=1$ for all $x>0$.
\end{prop}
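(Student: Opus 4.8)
The plan is to handle the two assertions separately, and in each case reduce the question about the time-changed process $X$ to a question about the additive functional $\eta$ of the L\'evy process $\xi$, using the identities in \eqref{eqn:passagetimes}.

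\textbf{Extinction.} On $\{\tau_0^-<\infty\}$ the extinction time equals $\eta(\tau_0^-)=\int_0^{\tau_0^-}R(\xi_s)^{-1}\,ds$, so I must decide when this is a.s.\ finite. First I would note that $\bP_x(\tau_0^-<\infty)>0$ always (since $\xi$ is spectrally positive it either drifts to $-\infty$ or oscillates unless $p>0$; even when $p>0$, $\bP_x(\tau_0^-<\infty)=e^{-px}>0$), so the event has positive probability and it suffices to show $\eta(\tau_0^-)<\infty$ a.s.\ on $\{\tau_0^-<\infty\}$ is equivalent to the integral test. The natural tool is the occupation formula: since $\xi$ has a local time and the potential density of $\xi$ killed at $\tau_0^-$ is $u(x,y)=e^{-px}W(y)-W(y-x)$ from \eqref{eqn:resolvent}, we get
\begin{equation*}
\bE_x\big(\eta(\tau_0^-)\big)=\int_0^\infty \frac{u(x,y)}{R(y)}\,dy=\int_0^\infty\frac{e^{-px}W(y)-W(y-x)}{R(y)}\,dy.
\end{equation*}
The behaviour of the integrand near $0$ is $\asymp W(y)/R(y)$ (the term $W(y-x)$ vanishes for $y<x$), and near $\infty$ it is bounded by $1/\gamma$ when $p,\gamma>0$ by Lemma~\ref{lem:1}; but for finiteness of $\eta(\tau_0^-)$ itself (not its mean) the only possible source of divergence is the time $\xi$ spends near $0$, since $\tau_0^-<\infty$ forces $\xi$ to approach $0$. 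So I would argue: if $\int_{0+}W(z)/R(z)\,dz<\infty$ then $\bE_x(\eta(\tau_0^-);\tau_0^-<\infty)<\infty$ hence $\eta(\tau_0^-)<\infty$ a.s.\ on that event; conversely, if the integral diverges, I would use the lower bound on $u(x,y)$ for $y$ in a neighbourhood of $0$ (uniformly in $x$ bounded away from $0$) together with a $0$--$1$ argument (the strong Markov property applied at successive visits of $\xi$ to a level just above $0$) to show $\eta(\tau_0^-)=\infty$ a.s.\ conditionally. This last conversion from ``infinite expectation'' to ``a.s.\ infinite'' is where I expect to spend the most care: one needs that the contribution near $0$ is, conditionally on $\tau_0^-<\infty$, an a.s.-infinite additive functional, which should follow because excursions of $\xi$ below a small level $\varepsilon$ accumulate infinite $\eta$-mass with probability one when $\int_{0+}W/R=\infty$ --- essentially an application of the Borel--Cantelli-type dichotomy for the sum of i.i.d.\ excursion contributions, or equivalently the fact that an exponential functional is finite iff its mean is, when the integrand is monotone enough near $0$.

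\textbf{Explosion.} Now assume $p,\gamma\in(0,\infty)$. Then $\bP_x(\tau_0^-=\infty)=1-e^{-px}>0$, and on this event $\xi_t\to\infty$, so $T_\infty^+=\eta(\infty)=\int_0^\infty R(\xi_s)^{-1}\,ds$ and I must show this is a.s.\ finite iff $\int^\infty 1/R(z)\,dz<\infty$. Conditioning on $\{\tau_0^-=\infty\}$, the key is Lemma~\ref{lem:5}: $\xi_t/\bar\xi_t\to 1$ and $\inf_{s>t}\xi_s/\xi_t\to 1$. Using the occupation density of $\xi$ on $[0,\infty)$, or more simply a direct sandwiching, I would compare $\eta(\infty)$ with $\int^\infty du/R(u)$ via the running maximum: since $\bar\xi$ is a nondecreasing process increasing to $\infty$, a change of variables $u=\bar\xi_t$ roughly gives $\int_0^\infty R(\xi_t)^{-1}\,dt \approx \int^\infty R(u)^{-1}\,d(\bar\xi^{-1})(u)$; the point is that the ``clock'' $\bar\xi_t$ advances at a linear rate asymptotically (by the strong law $\xi_t/t\to\gamma>0$ on $\{\tau_0^-=\infty\}$ when $\gamma<\infty$), so $\bar\xi_t\sim \gamma t$ and hence $\eta(\infty)<\infty \iff \int^\infty R(u)^{-1}\,du<\infty$. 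Concretely I would sandwich: for the upper bound, split $[0,\infty)=[0,t_0)\cup[t_0,\infty)$ and on $[t_0,\infty)$ use $\xi_s\ge c\,\bar\xi_s$ and monotonicity/local boundedness of $R$; for the lower bound use $\xi_s\le\bar\xi_s$ and that $\bar\xi$ hits every large level. The renewal-type input (Lemma~\ref{lem:1}, the stationary overshoot Lemma~\ref{lem:6}) will be useful to control the potential measure $U$ at infinity, giving $\int^\infty u(x,y)R(y)^{-1}\,dy \asymp \gamma^{-1}\int^\infty R(y)^{-1}\,dy$, which proves the expectation version $\bE_x(\eta(\infty);\tau_0^-=\infty)<\infty \iff \int^\infty R^{-1}<\infty$; then the same a.s.-versus-mean dichotomy as in the extinction case (now exploiting that $R$ is locally bounded away from $0$ and the process is transient, so the tail contribution is a genuinely convergent/divergent sum of independent block contributions over successive levels) upgrades this to the a.s.\ statement and yields $\bP_x(\eta(\infty)<\infty\mid\tau_0^-=\infty)\in\{0,1\}$, hence $=1$ precisely when the integral converges.

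\textbf{Main obstacle.} In both parts the analytic heart --- computing $\bE_x$ of the additive functional via $u(x,y)$ and reading off the integral test --- is routine given \eqref{eqn:resolvent} and Lemma~\ref{lem:1}. The genuine difficulty is the passage from a statement about the expectation being finite/infinite to the almost-sure dichotomy and the sharp conditional probability $=1$. For extinction this requires showing that near $0$ the $\eta$-mass is a.s.\ infinite when $\int_{0+}W/R=\infty$, which I would do by decomposing the path of $\xi$ according to its excursions below a shrinking sequence of levels and invoking independence; for explosion one needs the analogous statement at infinity, controlled by the a.s.\ linear growth $\bar\xi_t\sim\gamma t$ together with Lemma~\ref{lem:5}. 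A secondary technical point is justifying the uniformity in $x$ so that the $0$--$1$ law (via the strong Markov property at passage times) applies with the same threshold, for which the uniform convergence in Lemma~\ref{lem:1} is exactly what is needed.
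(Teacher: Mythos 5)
There are genuine gaps in both halves, and they sit exactly at the points you flag as "the analytic heart being routine plus an upgrade to a.s. statements." For the extinction part, your asserted implication "$\int_{0+}W(z)/R(z)\,dz<\infty \Rightarrow \bE_x(\eta(\tau_0^-);\tau_0^-<\infty)<\infty$" is false in general: by the Markov property that expectation equals $\int_0^\infty u(x,y)\omega(y)e^{-py}\,dy$ with $\omega=1/R$, so its finiteness also constrains $\omega$ at infinity (see the paper's Remark \ref{rmk:3}), and the extinction statement assumes neither $p>0$ nor $\gamma<\infty$ nor $\int^\infty \omega<\infty$, so the bound "near $\infty$ it is bounded by $1/\gamma$ by Lemma \ref{lem:1}" is not available and would not suffice anyway. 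The fix is to localize: work with $\bE_x(\eta(\tau_0^-);\tau_0^-<\tau_b^+)$, whose finiteness is equivalent to $\int_{0+}W_p(z)\omega(z)\,dz<\infty$, and then let $b\to\infty$ through $\{\tau_0^-<\infty\}=\bigcup_b\{\tau_0^-<\tau_b^+\}$; the paper itself points out in Remark \ref{rmk:3} that this localized first-moment route works, but your write-up does not perform the truncation. For the divergent direction you invoke a principle ("an exponential functional is finite iff its mean is, when the integrand is monotone enough near $0$") that is not a theorem you can cite and whose monotonicity hypothesis is not assumed on $R$. The paper avoids both issues at once by proving the Laplace-transform identity $\bE_x\big(e^{-\eta(\tau_0^-)};\tau_0^-<\tau_b^+\big)=W^{(\omega)}(b,x)/W^{(\omega)}(b,0+)$ (Proposition \ref{prop:6}) and showing $W^{(\omega)}(b,0+)<\infty$ if and only if $\int_{0+}^{1}W_p(z)\omega(z)\,dz<\infty$ (Lemma \ref{prop:7}); this gives positivity of the extinction probability in one direction and a.s. infiniteness of $\eta(\tau_0^-)$ in the other, and the conditional-probability-one claim then follows from monotonicity of $q\mapsto W^{(q\omega)}$ and $W^{(q\omega)}\to W$ as $q\to0+$, a step your outline has no counterpart for.

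For the explosion part the gap is similar but deeper: with $R$ only positive and locally bounded away from $0$ (no monotonicity, no regular variation), neither the sandwich "$\xi_s\ge c\,\bar\xi_s$ plus monotonicity of $R$" nor the law-of-large-numbers substitution $\bar\xi_t\sim\gamma t$ lets you compare $\omega(\xi_t)$ with $\omega(\gamma t)$, and the expectation test is again not equivalent to the stated criterion ($\bE_x(\eta(\infty);\tau_0^-=\infty)$ involves $\omega$ near $0$ as well, cf. Remark \ref{rmk:3}), so the "mean-to-a.s. dichotomy over independent blocks" you gesture at is precisely the nontrivial content you would have to prove. The paper does not prove it: it quotes the perpetual-integral $0$--$1$ law of D\"oring and Kyprianou, displayed as \eqref{eqn:ky}, which states that for $\gamma\in(0,\infty)$ one has $\bP\big(\int_0^\infty f(\xi_s)\,ds<\infty\big)\in\{0,1\}$ with finiteness if and only if $\int^\infty f(x)\,dx<\infty$, and the explosion claim (including the conditional probability equal to $1$) is read off directly from that. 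So your proposal either needs to cite that result as the paper does, or supply a genuine proof of it, which your sketch does not.
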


We remark here that by the first passage identities \eqref{eqn:passagetimes}, 
the event of explosion is equivalent to the finiteness of the so-called perpetual integrals of 
spectrally negative L\'evy processes on the set $\{\tau_{0}^{-}=\infty\}$,
which has been studied under different conditions; see for example \cite{Doring2015:integral, LPZ:integral, kolb2020} and the references therein.

In the paper, we first introduce the following assumptions on $R$, 
\begin{center}
\hypertarget{asp:0}{$\mathbb{H}_{0}$:}
$\D \int^{\infty}\frac{dy}{R(y)}<\infty$
\quad\text{and}\quad
\hypertarget{asp:1}{$\mathbb{H}_{1}$:} $\D \int_{0+}^{\infty}\frac{W_{p}(y)}{R(y)}\,dy<\infty$,
\end{center}
and denote by, under the \textit{explosion condition} \aspzero,
\begin{equation}\label{def_phi}
\varphi(x):=\frac{1}{\gamma}\int_{x}^{\infty}\frac{dy}{R(y)}
\quad\text{for $x>0$.}
\end{equation}

\begin{rmk}	
	For general rate function $R$, we may have
$\bP\big(\{T_{\infty}^{+}=\infty\}\cap\{T_{0}^{-}=\infty\}\big)>0$.
Note that the associated process $\xi$ either reaches $0$ or goes to $\infty$ before reaching $0$. Then in this case  with a positive probability
 $X$ either drifts to $\infty$ or extinguishes.
Proposition \ref{cor:1} shows that,
under condition \aspzero\ we have
$\{T_{\infty}^{+}<\infty\}=\{\tau_{0}^{-}=\infty\}$,
and under additional condition \aspone, we have
$\{T_{0}^{-}<\infty\}=\{\tau_{0}^{-}<\infty\}$, which gives
$\bP\big(\{T_{\infty}^{+}<\infty\}\cup\{T_{0}^{-}<\infty\}\big)=1$.
\end{rmk}

\begin{rmk}
If $p>0$, then $W_{p}(\infty)<\infty$ and the condition \aspone\ is equivalent to
\[\D \int_{0+}^{1}\frac{W(y)}{R(y)}\,dy+ \int_{1}^{\infty}\frac{1}{R(y)}\,dy<\infty,\]
which, although stronger than the explosion condition \aspzero,
allows to find explicit expressions for general $R$ for further analysis; c.f. Corollary \ref{prop:3},  Remarks \ref{rmk:4} and \ref{rmk:3}.
\end{rmk}


\begin{rmk}
By Proposition VII.10 in \cite{Bertoin96:book}, $W_{p}(x)\asymp \frac{1}{x\psi_{p}(1/x)}$, we have 
\[
\begin{gathered}
\int_{0+}\frac{W(z)}{R(z)}dz<\infty
\quad\text{if and only if}\quad
\int^{\infty}\frac{1}{z\psi_{p}(z)R(1/z)}dz<\infty\\
\text{and}\\
\int^{\infty}\frac{W_{p}(z)}{R(z)}dz<\infty
\quad\text{if and only if}\quad
\int_{0+}\frac{1}{z\psi_{p}(z)R(1/z)}dz<\infty.
\end{gathered}
\]

In particular, for  $R(z)=z^{\theta}$, by change of variable, we have from Proposition \ref{cor:1}
\[
\bP_{x}\big(T_{0}^{-}<\infty\big)>0
\quad\text{if and only if}\quad
\int^{\infty}\frac{z^{\theta-1}}{\psi_{p}(z)}dz<\infty,
\]
which coincides with Theorem 1.8 of \cite{LiPS2018}.
If $p,\gamma\in(0,\infty)$, then 
\[
\bP_{x}\big(T_{\infty}^{+}<\infty\big)>0
\quad\text{if and only if}\quad \theta>1
\]
which coincides with Theorem 1.10(1) of \cite{LiPS2018}.
\end{rmk}


\section{Main results}
Under the condition \aspone, the explosion time $T_{\infty}^{+}$ has finite exponential moment 
\begin{thm}\label{thm:4}
Assume $p>0$ and the condition \aspone\ holds for function $R$.
Then $m_{n}(x):= \bE_{x}\big((T_{\infty}^{+})^{n}; T_{\infty}^{+}<T_{0}^{-}\big)$
is finite and can be obtained recursively by
\begin{equation}\label{eqn:defn:rn}
m_{n}(x)=n\int_{0}^{\infty}u(x,y)\omega(y)m_{n-1}(y)\,dy
\quad\text{with}\quad
m_{0}(x)=1-e^{-px}.
\end{equation}
For $\D |q|<\Big(\int_{0+}^{\infty}\frac{W_{p}(z)}{R(z)}\,dz\Big)^{-1}$, we have for $x>0$
\[
\bE_{x}\big(e^{q\cdot T^{+}_{\infty}}; T_{\infty}^{+}<T_{0}^{-}\big)
=\sum_{n=0}^{\infty} \frac{q^{n}}{n!} m_{n}(x)<\infty.
\]
\end{thm}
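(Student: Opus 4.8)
We write $\omega=1/R$ for the weight in the Lamperti time change \eqref{eqn:integral}, and $h(x):=1-e^{-px}$. Since \aspone\ is stronger than \aspzero, the remark following Proposition~\ref{cor:1} gives $\{T_\infty^+<T_0^-\}=\{\tau_0^-=\infty\}$ up to a $\bP_x$-null set, and on this event $T_\infty^+=\eta(\infty)=\int_0^\infty\omega(\xi_s)\,ds$ by \eqref{eqn:passagetimes}; moreover $\int_0^\infty\omega(\xi_s)\,ds<\infty$ there by Proposition~\ref{cor:1}. Hence $m_n(x)=\bE_x\bigl[(\int_0^\infty\omega(\xi_s)\,ds)^{n};\,\tau_0^-=\infty\bigr]$, and in particular $m_0(x)=\bP_x(\tau_0^-=\infty)=1-e^{-px}=h(x)$ by the second identity in \eqref{exit_Lap}.

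For the recursion \eqref{eqn:defn:rn}, set $A_r:=\int_r^\infty\omega(\xi_s)\,ds$; on $\{\tau_0^-=\infty\}$ the map $r\mapsto A_r$ is absolutely continuous with $A_r\downarrow0$ (here $A_0<\infty$ a.s.), so $A_0^{\,n}=n\int_0^\infty A_r^{\,n-1}\,\omega(\xi_r)\,dr$ by the fundamental theorem of calculus and monotone convergence. On $\{\tau_0^-=\infty\}$ one has $A_r=\bigl(\int_0^\infty\omega(\xi_s)\,ds\bigr)\circ\theta_r$ and $\mathbf 1_{\{\tau_0^-=\infty\}}=\mathbf 1_{\{r<\tau_0^-\}}\,\mathbf 1_{\{\tau_0^-=\infty\}}\!\circ\theta_r$, so the Markov property at time $r$ and Tonelli's theorem yield
\[
m_n(x)=n\int_0^\infty\bE_x\bigl[\mathbf 1_{\{r<\tau_0^-\}}\,\omega(\xi_r)\,m_{n-1}(\xi_r)\bigr]\,dr=n\int_0^\infty u(x,y)\,\omega(y)\,m_{n-1}(y)\,dy,
\]
the last step by the resolvent formula \eqref{eqn:resolvent}; since everything is nonnegative this is valid in $[0,\infty]$ prior to knowing finiteness.

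Now put $c:=\int_{0+}^\infty\frac{W_p(y)}{R(y)}\,dy\in(0,\infty)$, finite by \aspone. The key point is the pointwise bound
\[
u(x,y)\le W_p(y)\qquad\text{for all }x,y>0.\tag{$\star$}
\]
By \eqref{eqn:resolvent} and $W(z)=e^{pz}W_p(z)$ one has $u(x,y)=e^{p(y-x)}\bigl(W_p(y)-W_p(y-x)\bigr)$, with $W_p\equiv0$ on $(-\infty,0)$. For $y\le x$ this equals $e^{p(y-x)}W_p(y)\le W_p(y)$; for $y>x$, with $z:=y-x$, $(\star)$ is equivalent to $W_p(z+x)(1-e^{-pz})\le W_p(z)$, and since $W_p(z+x)\le W_p(\infty)=\Phi'(0)$ it follows from $W_p(z)\ge W_p(\infty)(1-e^{-pz})$, i.e. from the monotonicity of $z\mapsto W_p(z)/(1-e^{-pz})$ on $(0,\infty)$ together with the fact (Lemma~\ref{lem:1}) that this ratio decreases to $W_p(\infty)$ as $z\to\infty$. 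Granting $(\star)$, $\int_0^\infty u(x,y)\,\omega(y)\,dy\le c$ for every $x$, so \eqref{eqn:defn:rn} and induction from $m_0=h\le1$ give $m_n(x)\le n!\,c^n<\infty$. Finally, for $|q|<1/c$ the estimate $\sum_{n\ge0}\tfrac{|q|^n}{n!}m_n(x)\le\sum_{n\ge0}(|q|c)^n<\infty$ justifies the term-by-term expansion of $\bE_x(e^{qT_\infty^+};T_\infty^+<T_0^-)$ — by Tonelli for $q\ge0$ and dominated convergence for $q<0$ — yielding the claimed identity.

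The reduction and the recursion are routine bookkeeping. The substantive step is the sharp comparison $(\star)$, equivalently the monotonicity of $W_p(z)/(1-e^{-pz})$: this is precisely what pins the radius at $1/c$ rather than at $1/\sup_x\!\int_0^\infty u(x,\cdot)\,\omega$, and its proof calls for genuine fluctuation theory — e.g. a renewal‑type argument for the scale function combined with the convexity inequality $\psi'(p)\le\gamma$, which itself follows from $\int_0^p\psi'(s)\,ds=\psi(p)-\psi(0)=0$ and the monotonicity of $\psi''$ on $[0,\infty)$ (as a check, for Brownian motion with drift $W_p(z)/(1-e^{-pz})$ is constant, so $(\star)$ holds with asymptotic equality). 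I expect this scale‑function estimate to be the main obstacle, and it is plausibly where the ``new asymptotic result for scale functions'' announced in the abstract is used.
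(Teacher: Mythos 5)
Your reduction to the event $\{\tau_0^-=\infty\}$, your derivation of the recursion (which is exactly the paper's Proposition \ref{prop:2}, proved the same way via the fundamental theorem of calculus, the Markov property and Tonelli), and your term-by-term expansion of the exponential are all fine, and you have correctly identified the one substantive estimate, $u(x,y)\le W_p(y)$. The gap is that you do not prove it: for $y>x$ you reduce $(\star)$ to $W_p(z+x)(1-e^{-pz})\le W_p(z)$ and then invoke the monotonicity of $z\mapsto W_p(z)/(1-e^{-pz})$, which you never establish and which your final paragraph explicitly defers as ``the main obstacle,'' offering only a speculative route via $\psi'(p)\le\gamma$. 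As written, the finiteness of $m_n$ with the bound $n!\,c^n$ and hence the radius $1/c$ rest on an unproven claim. Note also that the monotonicity is stronger than what you need (the single inequality $W_p(z)\ge W_p(\infty)(1-e^{-pz})$ would do), and the appeal to Lemma \ref{lem:1} is off target: the limit $W_p(z)/(1-e^{-pz})\to W_p(\infty)$ is immediate from \eqref{scale_limit} and needs no renewal theorem, while Lemma \ref{lem:1} itself assumes $\gamma<\infty$, a hypothesis Theorem \ref{thm:4} does not make.

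In fact no asymptotic or monotonicity input is needed. The paper gets $(\star)$ in one line from the identity \eqref{eqn:lem1:1}: for $x,z>0$,
\[
\frac{e^{-px}W(x+z)-W(z)}{W_p(x+z)}=\bP\big(\tau^{\{z\}}<\tau_{-x}^{-}\big)\le 1,
\]
and after the same algebra you carried out this is precisely $W_p(x+z)(1-e^{-pz})\le W_p(z)$, i.e. $u(x,y)\le W_p(y)$ for $y>x$ (the case $y\le x$ being trivial, as you note). Letting $x\to\infty$ in the same identity also yields the inequality $W_p(z)\ge W_p(\infty)(1-e^{-pz})$ you wanted, so that inequality is true, but its source is this exit/hitting-probability identity, valid whenever $p>0$, not the renewal-type asymptotics of Lemma \ref{lem:1}; the ``new asymptotic result'' announced in the abstract enters only in the proofs of Theorems \ref{thm:1} and \ref{thm:2}. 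Apart from this missing step your argument coincides with the paper's, and your Tonelli/dominated-convergence justification of the series identity is, if anything, more direct than the paper's appeal to Carleman's condition.
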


Recall $p,\gamma$ defined in \eqref{eqn:defn:c} and $\varphi$ defined in \eqref{def_phi}.
To study the asymptotic behaviors of the process $X$ near time $T_{\infty}^{+}$ on $\{T_{\infty}^{+}<\infty\}$, 
we always assume $p,\gamma\in(0,\infty)$, the explosion condition \aspzero\ and the following condition hold,  for some $\lambda\in [0,\infty)$ and all $x>0$
\begin{center}
\hypertarget{asp:2}{$\mathbb{H}_{2}$:} $\D
\frac{\varphi(x+y)}{\varphi(y)}\underset{y\to\infty}{\longrightarrow} e^{-\lambda x}$.
\end{center}
Denote by 
\[
\begin{gathered}
\bq_{t}(x,A):= \bP_{x}(X_{t}\in A)=\bP_{x}(X_{t}\in A, t<T_{0}^{-}\wedge T_{\infty}^{+}),\\
\bq^{\uparrow}_{x}(B):= \bq_{x}\big(B\big|T_{\infty}^{+}<\infty\big)=\bP_{x}\big(B\big|\tau_{0}^{-}=\infty\big)
=:\bP_{x}^{\uparrow}\big(B\big),
\end{gathered}
\]
for $x>0, t\geq0, A\in\mathscr{B}(0,\infty)$ and $B \in \sigma\{{X}_{t}, t\geq0\}\subset \sigma\{{\xi}_{s}, s\geq0\}$. Then $\bq_{t}$ defines the semigroup of $X$ before absorption,
$\bq^{\uparrow}_{x}$ defines the probability law of $X$ conditioned on explosion,
and $\bP_{x}^{\uparrow}$ denotes the probability law of $\xi$ conditioned to stay positive. 

\begin{rmk}\label{rmk:5}
	Recall that, a function $f>0$ defined on $(0,\infty)$ is
	\textit{regularly varying with index $\alpha\in\mathbb{R}$ at $\infty$} if for any $s>0$,
	\[
	f(s x)/f(x)\to s^{\alpha}\quad\text{as}\,\,x\to\infty,
	\]
	and is \textit{slowly varying} at $\infty$ if $\alpha=0$.
	
	The condition \asptwo\ is equivalent to function
	$x\to\varphi(\log x)$ being regularly varying with index $-\lambda\in(-\infty,0]$.
	 If there exists a positive function $f$ such that 
	 $$\D
	\frac{\varphi(x+y)}{\varphi(y)}\underset{y\to\infty}{\longrightarrow} f(x)\quad\text{for all $x>0$},$$
then condition \asptwo\ necessarily holds;
	 see Theorem 1.4.1 of \cite{Bingham1987:book}. Moreover, under condition \asptwo\ we have \[\log\varphi(x+y)-\log\varphi(y)\underset{y\to\infty}{\longrightarrow}-\lambda x.\]
	 It follows from Lemma 1.4.5 of \cite{Bingham1987:book} that \[\varphi(x)=e^{-(\lambda+\epsilon(x))x}\quad \text{
for some function $\epsilon$ satisfying $\epsilon(x)\rightarrow 0 $ as $x\rightarrow \infty$},\] which can also be obtained from a representation of regularly varying function. 
	
	A sufficient condition for the condition \asptwo\ is that function $x\to R(\log x)$ varies regularly with index $\lambda\geq0$, which holds
by applying Karamata's theorem, c.f. Theorem 1.5.11 and Proposition 1.5.9.b of \cite{Bingham1987:book}.
An interesting example for $R$ is a power-like function satisfying the condition \aspzero\ with $\lambda=0$. If
\[\liminf_{x\to\infty} x^{\alpha}R(x)>0
\quad\text{and}\quad
\limsup_{x\to\infty} x^{\beta}R(x)<\infty\]
for some constants $\alpha\geq \beta$ with $\alpha-\beta<1$, 
then \asptwo\ holds with $\lambda=0$.
Actually, under condition \aspzero, we have $\beta<-1$, thus for some constant $c>0$ and $x$ large enough,
\[
\frac{\varphi(x)-\varphi(x+a)}{\varphi(x)}=\frac{\int_{x}^{x+a}\frac{1}{R(y)}\,dy}{\int_{x}^{\infty}\frac{1}{R(y)}\,dy}
\leq c \frac{\int_{x}^{x+a}y^{\alpha}\,dy}{\int_{x}^{\infty}y^{\beta}\,dy}
\leq \frac{-c a}{1+\beta} x^{\alpha-\beta-1}\to 0.
\]
\end{rmk}

We are ready to present our results on explosion
whose proofs are deferred to Section \ref{sec:proof}.
Recall Proposition \ref{cor:1} that, if $p,\gamma\in(0,\infty)$ and \aspzero\ holds, $\bq^{\uparrow}_{x}\big(T_{\infty}^{+}<\infty\big)=1$.

We first present the asymptotic of the residual explosion time after first uncrossing a level.


\begin{thm}\label{thm:1}
Suppose that $p,\gamma\in(0,\infty)$ and $R$ satisfies the conditions \aspzero\ and \asptwo, 
and let $\lambda\geq0$ be the constant in \asptwo.
\begin{enumerate}[(A)]
\item\label{case:thm1:1} If $\lambda=0$, then in $\bq_{1}^{\uparrow}$-probability
\begin{equation}
\frac{T^{+}_{\infty}-T^{+}_{x}}{\varphi(x)}
\rightarrow 1
\quad\text{as $x\to\infty$}.
\end{equation}
\item\label{case:thm1:2} If $\lambda\in(0,\infty)$ and
$\D \limsup_{x\to\infty}
\frac{1}{\varphi^{2}(x)}
\int_{x}^{\infty}\frac{1}{R^{2}(y)}\,dy<\infty$,
then as $x\to\infty$
\begin{equation}
\begin{gathered}
\frac{\varphi(X(T_{x}^{+}))}{\varphi(x)}\Big|_{\bq_{1}^{\uparrow}}
\stackrel{D}{\Longrightarrow}
e^{-\lambda \varrho},
\quad\quad
\frac{T^{+}_{\infty}-T^{+}_{x}}{\varphi(X(T_{x}^{+}))}\Big|_{\bq_{1}^{\uparrow}}
\stackrel{D}{\Longrightarrow}
\lambda\gamma \int_{0}^{\infty} e^{-\lambda \xi_{t}}\,dt\\
\text{and}\quad\quad
\frac{T^{+}_{\infty}-T^{+}_{x}}{\varphi(x)}\Big|_{\bq_{1}^{\uparrow}}
\stackrel{D}{\Longrightarrow}
\lambda\gamma e^{-\lambda \varrho} \int_{0}^{\infty} e^{-\lambda \xi_{t}}\,dt,
\end{gathered}
\end{equation}
where $Z|_{\bq_{1}^{\uparrow}}$ denotes the law of $Z$ under $\bq_{1}^{\uparrow}$,
and where $\varrho$ is a random variable independent of $\xi$
with probability law $\rho$ specified in \eqref{eqn:defn:rho}.
\end{enumerate}
\end{thm}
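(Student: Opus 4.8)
The plan is to reduce, via the Lamperti identities \eqref{eqn:passagetimes} and the strong Markov property, to an analysis of the weighted occupation time of the associated L\'evy process started from a high level, and then to combine it with the asymptotic overshoot law from Lemma \ref{lem:6}. First, work under $\bq^{\uparrow}_{1}=\bP_{1}(\,\cdot\mid\tau^{-}_{0}=\infty)$, so that $\tau^{-}_{0}=\infty$, $\xi_{t}\to\infty$, and, by Proposition \ref{cor:1}, explosion occurs almost surely; by \eqref{eqn:passagetimes},
\[
T^{+}_{\infty}-T^{+}_{x}=\int_{\tau^{+}_{x}}^{\infty}\frac{ds}{R(\xi_{s})}=\int_{0}^{\infty}\frac{dt}{R\big(\xi_{\tau^{+}_{x}+t}\big)}.
\]
Set $V_{x}:=X(T^{+}_{x})=\xi(\tau^{+}_{x})$ and $O_{x}:=V_{x}-x$ (the overshoot). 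A first-passage/Markov decomposition at the $\bq^{\uparrow}_{1}$-a.s. finite stopping time $\tau^{+}_{x}$ shows that, conditionally on $\mathscr{F}_{\tau^{+}_{x}}$, the shifted path $(\xi_{\tau^{+}_{x}+t})_{t\ge0}$ is distributed as $\xi$ under $\bP^{\uparrow}_{V_{x}}$. Hence the problem reduces to understanding, as $v\to\infty$: (a) the law of $O_{x}$ under $\bq^{\uparrow}_{1}$, and (b) the law of $\varphi(v)^{-1}I(v)$, where $I(v):=\int_{0}^{\infty}R(\xi_{t})^{-1}\,dt$ under $\bP^{\uparrow}_{v}$.

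For (a), the overshoot: by spatial homogeneity and the weak convergence $\bP(\xi(\tau^{+}_{y})-y\in\cdot)\Rightarrow\rho$ recalled before Lemma \ref{lem:6}, one has $\bP_{1}(\xi(\tau^{+}_{x})-x\in\cdot)\Rightarrow\rho$; decomposing at $\tau^{-}_{0}$ (the SPLP creeps downward, so $\xi_{\tau^{-}_{0}}=0$) and using the Markov property gives $\bP_{1}(O_{x}\in\cdot\,;\,\tau^{+}_{x}<\tau^{-}_{0})\Rightarrow(1-e^{-p})\rho$. Since $\bq^{\uparrow}_{1}$ is the Doob $h$-transform of $\bP_{1}$ with $h(y)=1-e^{-py}$, evaluating the density at $\tau^{+}_{x}$ yields $\bq^{\uparrow}_{1}(O_{x}\in\cdot)\Rightarrow\rho$, i.e. $O_{x}\Rightarrow\varrho$ with $\varrho\sim\rho$. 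Combining this with \asptwo\ (and its uniformity, Remark \ref{rmk:5}) gives $\varphi(V_{x})/\varphi(x)=\varphi(x+O_{x})/\varphi(x)\Rightarrow e^{-\lambda\varrho}$, which is the first convergence of part \ref{case:thm1:2}; when $\lambda=0$ it gives $\varphi(V_{x})/\varphi(x)\to1$ in $\bq^{\uparrow}_{1}$-probability.

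The core step is (b). Since $p>0$, the $h$-transform density of $\bP^{\uparrow}_{v}$ over $\bP_{v}$, namely $(1-e^{-pv}e^{-p(\xi_{t}-v)})/(1-e^{-pv})$, tends to $1$ as $v\to\infty$, so the centred path $Y^{(v)}:=(\xi_{t}-v)_{t\ge0}$ under $\bP^{\uparrow}_{v}$ converges in law, in the Skorokhod space, to the unconditioned SPLP $\xi$ under $\bP$ (for which $\xi_{t}\to\infty$ and $\inf_{t}\xi_{t}>-\infty$ a.s.). By Remark \ref{rmk:5}, \asptwo\ is equivalent to $y\mapsto R(\log y)$ being regularly varying of index $\lambda$, whence $R(v+a)/R(v)\to e^{\lambda a}$ locally uniformly in $a$, with Potter bounds supplying the domination used below; for $\lambda>0$ Karamata's theorem moreover gives $\gamma\varphi(v)=\int_{v}^{\infty}R(y)^{-1}\,dy\sim(\lambda R(v))^{-1}$. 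Hence, for $\lambda>0$,
\[
\frac{I(v)}{\varphi(v)}=\frac{\gamma\int_{0}^{\infty}R(v+Y^{(v)}_{t})^{-1}\,dt}{\int_{v}^{\infty}R(y)^{-1}\,dy}\underset{v\to\infty}{\Longrightarrow}\gamma\lambda\int_{0}^{\infty}e^{-\lambda\xi_{t}}\,dt,
\]
obtained by factoring out $R(v)^{-1}$, using $R(v)/R(v+Y^{(v)}_{t})\to e^{-\lambda\xi_{t}}$ along the Skorokhod coupling together with dominated convergence: for large $t$ the integrand is dominated (via Potter bounds) by something decaying like $e^{-\lambda\gamma t/2}$ by the strong law, uniformly in $v$ after restricting to $\{\inf_{t}Y^{(v)}_{t}\ge-M\}$, whose $\bP^{\uparrow}_{v}$-probability tends to $1-e^{-pM}$; here the hypothesis $\limsup_{x\to\infty}\varphi(x)^{-2}\int_{x}^{\infty}R(y)^{-2}\,dy<\infty$ provides the uniform-integrability (second-moment) estimate needed to control the tail of the occupation integral uniformly in $v$. (Alternatively, as the authors do, one may run this through the occupation-density representation $\int_{0}^{\infty}R(\xi_{t})^{-1}\,dt=\int R(y)^{-1}\Lambda(dy)$, using the renewal statement of Lemma \ref{lem:1} and a last-exit decomposition to upgrade to an almost sure convergence.) When $\lambda=0$ the rescaling degenerates and one argues directly: writing $\varphi(v)=\int_{0}^{\infty}R(v+\gamma t)^{-1}\,dt$ and splitting the time integral at a level $T$, on $[0,T]$ both integrands equal $R(v)^{-1}(1+o(1))$ uniformly (slow variation of $R(\log\cdot)$) and cancel in the ratio, while on $(T,\infty)$ the strong law $Y^{(v)}_{t}/t\to\gamma$ and slow variation of $R(\log\cdot)$ (which tolerates the $o(t)$ discrepancy $Y^{(v)}_{t}-\gamma t$) give $R(v+Y^{(v)}_{t})/R(v+\gamma t)\to1$; hence $I(v)/\varphi(v)\to1$ in $\bP^{\uparrow}_{v}$-probability.

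Finally, assembling: conditionally on $\mathscr{F}_{\tau^{+}_{x}}$ we have $(T^{+}_{\infty}-T^{+}_{x})/\varphi(V_{x})=I(V_{x})/\varphi(V_{x})$, which, since $V_{x}\to\infty$, converges in law to $\gamma\lambda\int_{0}^{\infty}e^{-\lambda\xi_{t}}\,dt$ — a limit independent of $V_{x}$; so the convergence holds unconditionally and jointly with the $\mathscr{F}_{\tau^{+}_{x}}$-measurable ratio $\varphi(V_{x})/\varphi(x)\Rightarrow e^{-\lambda\varrho}$, the two limits being independent. This gives the second convergence of part \ref{case:thm1:2}, and the third on multiplying; for $\lambda=0$ (part \ref{case:thm1:1}), combining $I(V_{x})/\varphi(V_{x})\to1$ and $\varphi(V_{x})/\varphi(x)\to1$ in $\bq^{\uparrow}_{1}$-probability gives $(T^{+}_{\infty}-T^{+}_{x})/\varphi(x)\to1$ in probability. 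I expect the main obstacle to be step (b): establishing the convergence of $\varphi(v)^{-1}I(v)$ with enough uniformity in $v$ — controlling $\int_{T}^{\infty}R(v+Y^{(v)}_{t})^{-1}\,dt$ as $v,T\to\infty$ (where the second-moment hypothesis and Potter bounds enter), handling the $\lambda=0$ case where there is no clean scaling, and coping with the fact that $\omega\mapsto\int_{0}^{\infty}e^{-\lambda\omega_{t}}\,dt$ is not Skorokhod-continuous, which forces a coupling-and-truncation argument; a secondary point is justifying the asymptotic independence of the overshoot and the post-$\tau^{+}_{x}$ path.
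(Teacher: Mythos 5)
Your overall skeleton (strong Markov at $\tau_x^+$, convergence of the overshoot to $\rho$ under $\bq_1^{\uparrow}$, then an analysis of the weighted occupation time started from a high level, finally assembling via the product/Doob $h$-transform structure) is the same as the paper's for part (B), and your step (a) on the overshoot is fine. The genuine gap is in your core step (b), and it also infects your $\lambda=0$ argument: you assert that \asptwo\ is \emph{equivalent} to $y\mapsto R(\log y)$ being regularly varying of index $\lambda$, and you then use pointwise control $R(v+a)/R(v)\to e^{\lambda a}$, Potter bounds for $R$, and (for $\lambda>0$) $\gamma\varphi(v)\sim(\lambda R(v))^{-1}$. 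None of this is available: by Remark \ref{rmk:5}, \asptwo\ is equivalent to regular variation of the \emph{tail integral} $x\mapsto\varphi(\log x)$ only; regular variation of $R(\log\cdot)$ is merely a sufficient condition, and the paper stresses that \asptwo\ is strictly weaker (this is one of its selling points over \cite{Foucart2019}). Under the theorem's hypotheses $R$ may oscillate arbitrarily badly at fixed points, so your time-domain argument ``factor out $R(v)^{-1}$ and pass to the limit in $R(v)/R(v+Y^{(v)}_t)$'' has no justification, and the same objection applies to your $\lambda=0$ splitting at time $T$, where you again invoke slow variation of $R(\log\cdot)$.

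The paper's proof is built precisely to avoid this: for $\lambda>0$ it rewrites $\int_0^\infty\omega(z+\mathring\xi_t)\,dt=\int_{\mathbb R}\omega(z+y)\mathring L(y,\infty)\,dy$ by the occupation density formula, and then applies the Abelian-type Lemma \ref{lem:3}, whose proof uses only \asptwo\ on $\varphi$ together with the second-moment hypothesis $\limsup\varphi(x)^{-2}\int_x^\infty R(y)^{-2}dy<\infty$ (via Cauchy--Schwarz and Karamata), starting from indicator test functions --- for which the spatial integral is exactly a ratio of tail integrals $\varphi(z+c)/\varphi(z)$ --- and extending by monotone approximation; Lemma \ref{lem:4} supplies the needed square-integrability of $\mathring L(\cdot,\infty)$. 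In other words, the only asymptotic information ever used about $\omega=1/R$ is through its tail integrals against spatial (not temporal) weights, which is exactly what \asptwo\ controls. For $\lambda=0$ (part (A)) the paper likewise does not argue pathwise at all: it computes $m_1(x)\sim\varphi(x)$ and $m_2(x)\sim\varphi(x)^2$ via the potential density and the renewal-type Lemma \ref{lem:1} (through Lemma \ref{lem:2}), handles the overshoot by a monotonicity argument, and concludes by an $L^2$/Chebyshev bound. So your plan would prove the theorem only under the strictly stronger assumption that $R(\log\cdot)$ is regularly varying; to work under \asptwo\ as stated you must, as in the paper, transfer the problem from the time axis to the spatial occupation measure and exploit only tail-integral asymptotics of $1/R$.
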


\begin{rmk}
If $x\to R(\log x)$ varies regularly with index $\lambda>0$, it further follows from the Karamata's theorem that
\[\D \int_{x}^{\infty}\frac{R^{2}(x)}{R^{2}(y)}\,dy\to (2\lambda)^{-1}\quad\text{and}\quad \D\int_{x}^{\infty}\frac{R(x)}{R(y)}\,dy\to \lambda^{-1}\quad\text{ as}\quad x\to\infty,\]
thus,
\[\frac{1}{\varphi^{2}(x)}\int_{x}^{\infty}\frac{1}{R^{2}(y)}\,dy\to \frac{\gamma^{2}\lambda}{2}.\]
In particular, if $R(x)e^{-\lambda x}$ varies regularly with index $\alpha$ for some $\lambda\geq0$ and $\alpha\in\mathbb{R}$, then $x\to R(\log x)$ varies regularly with index $\lambda$.
\end{rmk}

We also have the following main result concerning the speed of explosion.
\begin{thm}\label{thm:2}
Suppose that $\gamma\in(0,\infty)$ and $R$ satisfies the conditions \aspzero\ and \asptwo.
\begin{enumerate}[(a)]
\item\label{case:thm2:1}If $\lambda=0$ and
$\D \liminf_{y\to\infty} \frac{\varphi(y)}{\varphi(hy)}\in (1, \infty]$ for every $h>1$,
then we have in $\bq_{1}^{\uparrow}$-probability
\[
\frac{X(T^{+}_{\infty}-t)}{\varphi^{-1}(t)}\to 1
\quad\text{and}\quad
\frac{\inf_{0<s<t}X(T^{+}_{\infty}-s)}{\varphi^{-1}(t)}\to 1
\quad\text{as $t\to0+$},
\]
where $\varphi^{-1}(t):=\sup\{s>0, \varphi(s)>t\}$ is the right inverse of $\varphi$.
\item\label{case:thm2:2} If $\lambda>0$, then we have in $\bq_{1}^{\uparrow}$-probability
\[\frac{X(T^{+}_{\infty}-t)}{-\log t}\to \lambda^{-1}
\quad\text{and}\quad
\frac{\inf_{0<s<t}X(T^{+}_{\infty}-s)}{-\log t}\to \lambda^{-1}
\quad\text{as $t\to0+$}.\]
\end{enumerate}
\end{thm}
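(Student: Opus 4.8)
The plan is to transport the statement to the associated spectrally positive L\'evy process $\xi$ and then invoke Theorem~\ref{thm:1}. We work under $\bq_1^{\uparrow}=\bP_1^{\uparrow}$, so $\tau_0^-=\infty$ and $\xi_t\to\infty$; since $\gamma\in(0,\infty)$ forces $p>0$, Proposition~\ref{cor:1} together with \aspzero\ gives that $T_\infty^+=\eta(\infty)=\int_0^\infty R(\xi_s)^{-1}\,ds$ is finite $\bq_1^{\uparrow}$-a.s. As $\bq_1^{\uparrow}$ is absolutely continuous with respect to $\bP_1$ with a bounded density (supported on $\{\tau_0^-=\infty\}$), every a.s.\ or in-probability tail statement about $\xi$ valid under $\bP$ persists under $\bq_1^{\uparrow}$; this applies to Lemma~\ref{lem:5}, to the tightness of the overshoot $\xi(\tau_z^+)-z$ as $z\to\infty$ (weak limit $\rho$, see the display preceding Lemma~\ref{lem:6}), and, via the identity $\bP_x(\tau_c^-<\infty)=e^{-p(x-c)}$ from \eqref{exit_Lap}, to the tightness, uniform in large $z$, of $\inf_{u\ge0}\bigl(\xi(\tau_z^++u)-\xi(\tau_z^+)\bigr)$. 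Since $R$ is locally bounded away from $0$ and $\xi$ is c\`adl\`ag, $\eta\colon[0,\infty)\to[0,T_\infty^+)$ is a continuous strictly increasing bijection with $X_{\eta(r)}=\xi_r$; writing $r(t):=\eta^{-1}(T_\infty^+-t)$, which is continuous and strictly decreasing in $t$ with $r(t)\uparrow\infty$ as $t\downarrow0$, we obtain
\[
X(T_\infty^+-t)=\xi_{r(t)},\qquad \inf_{0<s<t}X(T_\infty^+-s)=\inf_{u\ge r(t)}\xi_u .
\]
It therefore suffices to establish matching two-sided asymptotics for $\xi_{r(t)}$ as $t\downarrow0$ and the matching lower bound for $\inf_{u\ge r(t)}\xi_u$ (the upper bound being immediate since $\inf_{u\ge r(t)}\xi_u\le\xi_{r(t)}$).

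These are obtained by sandwiching against first-passage times. Put $g(r):=T_\infty^+-\eta(r)=\int_r^\infty R(\xi_s)^{-1}\,ds$; then $g$ is strictly decreasing, $g(r(t))=t$, and $g(\tau_z^+)=T_\infty^+-T_z^+$ by \eqref{eqn:passagetimes}. Fix $\varepsilon\in(0,1)$; by Lemma~\ref{lem:5} the event $G_t:=\{\bar\xi_{r(t)}\le(1-\varepsilon)^{-1}\xi_{r(t)}\}$ has $\bq_1^{\uparrow}$-probability tending to $1$ as $t\downarrow0$. For a deterministic level $z=z(t)\uparrow\infty$: (i) on $\{\xi_{r(t)}>z\}$ one has $\bar\xi_{r(t)}>z$, hence $\tau_z^+\le r(t)$ and $t=g(r(t))\le g(\tau_z^+)=T_\infty^+-T_z^+$; (ii) on $G_t\cap\{\xi_{r(t)}<z\}$ one has $\bar\xi_{r(t)}<(1-\varepsilon)^{-1}z$, hence $\tau_{(1-\varepsilon)^{-1}z}^+>r(t)$ and $t\ge T_\infty^+-T_{(1-\varepsilon)^{-1}z}^+$; (iii) on $\{\bar\xi_{r(t)}>z\}$ one has $\tau_z^+\le r(t)$, so by the strong Markov property $\inf_{u\ge r(t)}\xi_u\ge\inf_{u\ge\tau_z^+}\xi_u\ge z+\inf_{u\ge0}\bigl(\xi(\tau_z^++u)-\xi(\tau_z^+)\bigr)=z+O_{\bq_1^{\uparrow}}(1)$. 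In (i) and (ii), Theorem~\ref{thm:1} pins down $(T_\infty^+-T_{z(t)}^+)/\varphi(z(t))$, so comparing with $t$ and using the shape of $\varphi$ will either confirm or contradict the assumed bound on $\xi_{r(t)}$; in (iii) one divides by $z(t)\to\infty$ to render the $O(1)$ term negligible.

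By Remark~\ref{rmk:5}, condition \asptwo\ gives $\varphi(x)=e^{-(\lambda+\epsilon(x))x}$ with $\epsilon(x)\to0$. In case~\ref{case:thm2:1} ($\lambda=0$), Theorem~\ref{thm:1}\ref{case:thm1:1} gives $(T_\infty^+-T_z^+)/\varphi(z)\to1$ in probability. Taking $z(t)=(1+\delta)\varphi^{-1}(t)$ in (i): by the hypothesis with $h=1+\delta$ there is $c>1$ with $\varphi\bigl((1+\delta)\varphi^{-1}(t)\bigr)\le t/c$ for all small $t$, so on $\{\xi_{r(t)}>z(t)\}$ we get $(T_\infty^+-T_{z(t)}^+)/\varphi(z(t))\ge c>1$, forcing $\bq_1^{\uparrow}(\xi_{r(t)}>z(t))\to0$; symmetrically, with $z(t)=(1-\delta)\varphi^{-1}(t)$ and $\varepsilon<\delta$ in (ii), the hypothesis with $h=(1-\varepsilon)/(1-\delta)>1$ gives $c'>1$ with $\varphi\bigl((1-\varepsilon)^{-1}z(t)\bigr)\ge c't$, forcing $\bq_1^{\uparrow}(\xi_{r(t)}<z(t))\to0$. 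Letting $\delta\downarrow0$ yields $\xi_{r(t)}/\varphi^{-1}(t)\to1$ in $\bq_1^{\uparrow}$-probability, and (iii) with $z(t)=(1-2\delta)\varphi^{-1}(t)$ gives the same for $\inf_{u\ge r(t)}\xi_u$. In case~\ref{case:thm2:2} ($\lambda>0$), $\varphi^{-1}(t)\sim\lambda^{-1}(-\log t)$, and Theorem~\ref{thm:1}\ref{case:thm1:2} only yields the weaker $(T_\infty^+-T_z^+)/\varphi(z)\Rightarrow\mathcal R:=\lambda\gamma\,e^{-\lambda\varrho}\int_0^\infty e^{-\lambda\xi_t}\,dt$ with $\mathcal R\in(0,\infty)$ $\bq_1^{\uparrow}$-a.s.; but then $\log\bigl((T_\infty^+-T_z^+)/\varphi(z)\bigr)$ is tight on both sides, so $-\log(T_\infty^+-T_z^+)=(\lambda+\epsilon(z))z+O_{\bq_1^{\uparrow}}(1)$ as $z\to\infty$, i.e.\ the multiplicative fluctuation becomes an additive $O(1)$ one. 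Feeding this into (i) and (ii) with $z(t)=(\lambda^{-1}\pm\delta)(-\log t)$ and using $\epsilon(z(t))\to0$, $-\log t\to\infty$, shows $\bq_1^{\uparrow}\bigl(\xi_{r(t)}>(\lambda^{-1}+\delta)(-\log t)\bigr)\to0$ and $\bq_1^{\uparrow}\bigl(\xi_{r(t)}<(\lambda^{-1}-\delta)(-\log t)\bigr)\to0$, hence $\xi_{r(t)}/(-\log t)\to\lambda^{-1}$; and (iii) with $z(t)=(\lambda^{-1}-2\delta)(-\log t)$ gives the same for $\inf_{u\ge r(t)}\xi_u$. (No extra hypothesis is needed here since $\lambda>0$ already forces $\liminf_y\varphi(y)/\varphi(hy)=\infty$ for every $h>1$.)

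The main obstacle is precisely the step carried out in (i)--(iii): Theorem~\ref{thm:1} and the overshoot estimates are available only along the first-passage times $T_z^+$, while the statement concerns \emph{every} $t$ near $0$; this gap is bridged by the monotone sandwich $\inf_{u\ge\tau_{z(t)}^+}\xi_u\le\xi_{r(t)}\le\bar\xi_{r(t)}$ together with Lemma~\ref{lem:5}. The one delicate point is that Lemma~\ref{lem:5}, the overshoot tightness, and the tightness of $\inf_{u\ge0}\bigl(\xi(\tau_z^++u)-\xi(\tau_z^+)\bigr)$ must be applied along the \emph{random} indices $r(t)\to\infty$ and $\tau_{z(t)}^+\to\infty$, which is legitimate because these are almost-sure (resp.\ uniform-in-$z$) statements and $\bq_1^{\uparrow}\ll\bP_1$ on $\{\tau_0^-=\infty\}$.
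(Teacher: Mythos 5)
Your proof is correct and follows essentially the same route as the paper: sandwich the time $t$ between the residual explosion times $J(\tau^{+}_{z})=T^{+}_{\infty}-T^{+}_{z}$ at levels $z$ slightly above and below $\varphi^{-1}(t)$ (resp.\ $\lambda^{-1}(-\log t)$), invoke Theorem \ref{thm:1} for those residual times, and use Lemma \ref{lem:5} to compare the process with its running maximum. The only minor differences are organizational: the paper first proves $\bar{X}(T^{+}_{\infty}-t)/\varphi^{-1}(t)\Rightarrow 1$ (where the comparison with $J(\tau^{+}_{\alpha(t)})$ and $J(\tau^{+}_{\beta(t)})$ is an exact event identity) and then transfers to $X$ and to $\inf_{0<s<t}X(T^{+}_{\infty}-s)$ via the two statements of Lemma \ref{lem:5}, whereas you embed Lemma \ref{lem:5} as the event $G_t$ and control the future infimum directly by strong-Markov overshoot tightness at $\tau^{+}_{z}$, which is an equally valid (and somewhat more explicit) treatment of the random-time step.
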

\begin{rmk}
If for some $M,m>0$ and $\alpha>1$, $\D m x^{\alpha}<R(x)<M x^{\alpha}$ for all $x$ large enough, then for $h>1$ we have
\[
\frac{\varphi(x)-\varphi(xh)}{\varphi(x)}
=\frac{\int_{x}^{xh}\frac{1}{R(y)}\,dy}{\int_{x}^{\infty}\frac{1}{R(y)}\,dy}
\geq \frac{m\int_{x}^{xh}y^{-\alpha}\,dy}{M\int_{x}^{\infty}y^{-\alpha}\,dy}
=\frac{m}{M} \big(1-h^{1-\alpha}\big)>0,
\]
 and the corresponding function $R$ satisfies condition \eqref{case:thm2:1} in Theorem \ref{thm:2}.
\end{rmk}
\begin{rmk}
For the asymptotic functions in Theorem \ref{thm:1} and \ref{thm:2},
\begin{itemize}
\item if $R(x)=(c+x)^{\theta}$ for $\theta>1$ and any constant $c$, then
\[\D \varphi(x)=\frac{(x+c)^{1-\theta}}{\gamma(\theta-1)}\quad
\text{and}\quad \D\varphi^{-1}(t)\sim (\gamma(\theta-1)t)^{\frac{1}{1-\theta}}\quad
\text{as}\quad t\rightarrow 0+;\]
\item if $R(x)=e^{\lambda x}$ for $\lambda>0$, then
$\D \varphi(x)=(\lambda\gamma)^{-1} e^{-\lambda x}$
and $\D \varphi^{-1}(t)\sim -\lambda^{-1}\log t$\,\, as \,\, $t\rightarrow 0+$.
\end{itemize}
\end{rmk}

\begin{rmk}	
Studying the  explosion behaviors of $X$ for rate function $R$ with arbitrary behavior near $\infty$ seems to be rather challenging since the explosion may allow different speeds when the explosion time is approached in different ways.  To this end, we assume \asptwo\ on the asymptotic behavior of the rate function, which is similar to those assumptions in \cite{Bansaye16} and \cite{Foucart2019}.
\end{rmk}

\begin{rmk}
We assume that  $\gamma\in(0,\infty)$ in both Theorem \ref{thm:1} and Theorem 
\ref{thm:2}. It remains open to identify the speed of explosion for continuous-state nonlinear processes with big jumps in the sense that $\int_1^\infty x\Pi(dx)=\infty $.
\end{rmk}

\section{Proofs}\label{sec:proof}
This section is dedicated to the proofs of the main results.
Lemmas \ref{lem:5}, \ref{lem:1} and \ref{lem:6} for SPLP are of independent interest and are proved first.
They will be applied in the proofs of main results thereafter. Recall $p$ and $\gamma$ defined in \eqref{eqn:defn:c}.

\subsection{Proofs of Lemmas \ref{lem:5}, \ref{lem:1} and \ref{lem:6}}

Our proof of Lemma \ref{lem:5} is based on the It\^{o} excursion theory, where
the compensation formula and the exponential formula for Poisson point process are applied; c.f. Chapter O of \cite{Bertoin96:book}.
Here we use the standard notions in the fluctuation theory of L\'evy process from \cite{Bertoin96:book}.
Let $\chi:=\bar{\xi}-\xi$ be the L\'evy process reflected at its running maximum,
where $\D\bar{\xi}_{t}=\sup_{s\leq t}\xi_{s}$ is the running maximum of $\xi$.
Let $l$ be a local time process of $\chi$ at $0$ and $l^{-1}$ be its right inverse.
Since $\D\lim_{t\to\infty}\xi(t)=\infty$, $\chi$ is a recurrent Markov process.
In addition, $\big(l^{-1}_{s},\xi(l^{-1}_{s})\big)_{s\geq 0}$
defines a proper bivariate subordinator on $(0,\infty)$,
called the ladder process in Chapter VI of \cite{Bertoin96:book},
with a version of its Laplace exponent given by
$\widehat{\kappa}(\alpha,\beta)=\frac{\alpha-\psi(\beta)}{\Phi(\alpha)-\beta}$.
The excursion process $(\epsilon_{s})_{s\geq0}$ of $\chi$ away from $0$, defined by
\begin{equation}
\epsilon_{s}:=\left\{
\begin{array}{c@{\quad}c}
\{\chi(t+l^{-1}_{s-}), 0\leq t<l^{-1}_{s}-l^{-1}_{s-}\}
& \text{if $l^{-1}_{s-}<l^{-1}_{s}$},\\
\Delta & \text{otherwise, }
\end{array}\right.
\end{equation}
for some isolated point $\Delta$, is a Poisson point process with characteristic measure $n$.
Denote by $\bar{\epsilon}$ the associated excursion height process.
\begin{proof}[Proof of Lemma \ref{lem:5}]
Assume $p>0$. We have $\bP(\tau_{1/\varepsilon}^{+}<\infty)=1$ for $\varepsilon\in(0,1)$.
For every $t>\tau_{1/\varepsilon}^{+}$ and $\bar{\xi}(t)-\xi(t)\geq \varepsilon\cdot\bar{\xi}(t)$, 
we have $\bar{\epsilon}_{s}>\varepsilon\cdot\bar{\xi}(t)>1$ and $t\in (l^{-1}(s-), l^{-1}(s))$ where $s=l(t)$ and $\bar{\xi}(t)=\bar{\xi}(l^{-1}_{s-})$. Therefore, by counting the number of those excursions,
\[
\Big\{\varepsilon<\limsup_{t\to\infty} \frac{\bar{\xi}(t)-\xi(t)}{\bar{\xi}(t)}\Big\}
=\Big\{
\#\big\{s>0\big| \bar{\epsilon}_{s}>1, \bar{\epsilon}_{s}>\varepsilon\cdot \bar{\xi}(l^{-1}_{s-})\big\}
:=N_{\varepsilon}=\infty\Big\}.
\]

On the other hand, since $\chi$ is absent of positive jumps,
the law of $\bar{\epsilon}$ given $\bar{\epsilon}>1$ under $n(\cdot)$
is identical to the law of $\D|\inf_{t<\tau_{0}^{+}}{\xi}(t)|$ under $\bP_{-1}$,
that is, for $y>1$
\[n(\bar{\epsilon}>y|\bar{\epsilon}>1)=
\bP_{-1}\Big(\inf_{t<\tau_{0}^{+}}{\xi}(t)<-y\Big)
=\bP_{-1}(\tau_{-y}^{-}<\tau_{0}^{+})=\frac{W(1)}{W(y)}.\]

Notice that similar to Lemma VI.2 of \cite{Bertoin96:book}, the heights of the excursion process $\chi$ are independent of
$\big(\bar{\xi}(l^{-1}_{s-}), s>0\big)$.
Therefore, conditioning on $(\bar{\xi}(l^{-1}_{s-}), s>0)$, 
$\D N_{\varepsilon}$ is Poisson distributed with parameter
\begin{equation}
\Big(\int_{0}^{\infty} \frac{n(\bar{\epsilon}>1)W(1)}{W\big(1\vee \varepsilon\cdot\bar{\xi}(l^{-1}_{s-})\big)} ds\Big).
\end{equation}

Since
$W(x)=e^{px}W_{p}(x)\geq W_{p}(1) e^{px}$ for $x\geq1$,
and
$\bar{\xi}(l^{-1}_{s-})=\bar{\xi}(l^{-1}_{s})
=\xi(l^{-1}_{s})$ for almost every $s>0$ $\bP$-almost surely,
by the right-continuity of $\xi$ and the definition of $l^{-1}$,
we have from Fubini's theorem that
\[
\bE\big(\int_{0}^{\infty}
\frac{W_{p}(1)}{W\big(1\vee \varepsilon\cdot\bar{\xi}(l^{-1}_{s-})\big)} ds\Big)\\
\leq \int_{0}^{\infty}\bE\big(e^{-p\varepsilon \cdot \xi(l^{-1}_{s})}\big) ds
=\frac{p(1-\varepsilon)}{-\psi(\varepsilon p)}<\infty,
\]
where for the equality above we use the fact that $\xi(l^{-1}_{s})$ is a subordinator
with Laplace exponent $\frac{-\psi(\beta)}{p-\beta}$.
Therefore, $\bP$ almost surely, we have
\[
\int_{0}^{\infty} \frac{n(\bar{\epsilon}>1)W(1)}{W\big(1\vee \varepsilon\cdot\bar{\xi}(l^{-1}_{s-})\big)} ds
<\infty
\quad \Leftrightarrow\quad
N_{\varepsilon}<\infty
\quad \Leftrightarrow\quad
\limsup_{t\to\infty} \frac{\bar{\xi}(t)-\xi(t)}{\bar{\xi}(t)}\leq \varepsilon,
\]
and finish the proof of the first assertion.

For the second limit,
from the previous result and
the identity $\D \inf_{s>t}\xi_{s}=\xi_{t}+ \inf_{s>t}(\xi_{s}-\xi_{t})$,
it is sufficient to check that $\D \frac{1}{\bar{\xi}_{t}}\inf_{s>t}(\xi_{s}-\xi_{t})$ converge to $0$ in probability. From the Markov property and the fact that $\bar{\xi}_{t}\to\infty$ as $t\to\infty$, the desired conclusion follows.
\end{proof}

\begin{proof}[Proof of Lemma \ref{lem:6}]
The proof is based on the observation that
$\xi$ and its Ladder height process have the same overshoot when first up-crossing a level.
Thus, the stationary overshoot is identical in law to the limit of the overshoot of the ladder process, and where we need the assumption of $\gamma=\bE[\xi_{1}]\in(0,\infty)$.

More specifically, consider a Ladder height process of $\xi$,
which is a subordinator with a version of Laplace exponent
$\widehat{\kappa}(\beta)=\frac{\psi(\beta)}{\beta-p}$,
c.f. Theorem VII.4 of \cite{Bertoin96:book}.
Let $\delta$ and $\nu(dz)$ be the associated drift parameter and jump measure, respectively. Then
we have from Theorem 5.7 of \cite{Kyprianou2014:book:levy} that
\[\begin{split}
\widehat{\rho}(s):=&\ \int_{0-}^{\infty}e^{-sz}\rho(dz)
 = \int_{0}^{\infty} dy\int_{0}^{\infty}\frac{e^{-sz}}{\mu}\nu(dz+y)
 + \frac{\delta}{\mu}\\
 =&\ \frac{1}{\mu}\big(\delta+ \frac{1}{s}\int_{0}^{\infty}(1-e^{-sz})\nu(dz)\big)
 =\frac{\psi(s)}{\mu s (s-p)},\end{split}\]
where $\mu=\widehat{\kappa}'(0)=\frac{\psi'(0)}{-p}\in(0,\infty)$, which finishes the proof.
\end{proof}

\begin{rmk}
From the L\'evy- Khintchine formula, for the case $p,\gamma\in(0,\infty)$,
\[
\widehat{\rho}(s)=\frac{p\sigma^{2}}{2\gamma}+ \frac{p}{\gamma}\int_{0}^{\infty} e^{-sz}
\Big(\int_{0}^{\infty}e^{-\rho y}\bar{\Pi}(y+z)dy\Big)dz.
\]
\end{rmk}

We are now ready to prove Lemma \ref{lem:1} by applying Lemma \ref{lem:6}.
\begin{proof}[Proof of Lemma \ref{lem:1}]
For $y>0$, define the hitting time of $\xi$ by
\[
\tau^{\{y\}}=\inf\{t>0, \xi_{t}=y\}.
\]
Since the process $X$ is absent of negative jumps, then
$\tau^{\{y\}}=\tau_{y}^{+}+ \tau_{y}^{-}\circ\theta_{\tau_{y}^{+}}$, and by \eqref{exit_Lap} and Lemma \ref{lem:6} we have
\begin{equation}\label{eqn:lem1:2}
\bP(\tau^{\{y\}}<\infty)=\bE\big(e^{-p(\xi(\tau_{y}^{+})-y)}\big)
\underset{y\to\infty}{\longrightarrow} \int_{0-}^{\infty}e^{-pz}\rho(dz)
=(\gamma\Phi'(0))^{-1}.
\end{equation}
It is proved in Lemma 3.1 of \cite{LZ:localtime} that
\[
\bP_{x}\big(\tau^{\{y\}}<\tau_{b}^{+}\wedge\tau_{c}^{-}\big)
=\frac{W(b-x)}{W(b-y)}-\frac{W(y-x)}{W(y-c)}\frac{W(b-c)}{W(b-y)}
\]
for $x,y\in(c,b)$. Letting $b\to\infty$, it follows from \eqref{scale_limit} that
\begin{equation}
\bP(\tau^{\{y\}}<\tau_{-x}^{-})=\frac{ e^{-px}W(y+x)-W(y)}{W_{p}(y+x)}
\quad\text{for $x,y>0$}.\label{eqn:lem1:1}
\end{equation}
On the other hand, applying the strong Markov property, we have
\begin{align*}
\bP(\tau^{\{y\}}<\tau_{-x}^{-})
=&\ \bP(\tau^{\{y\}}<\infty)- \bP(\tau_{-x}^{-}<\tau^{\{y\}}<\infty)\\
=&\ \bP(\tau^{\{y\}}<\infty)- \bP(\tau_{-x}^{-}<\tau_{y}^{+})
\bP_{-x}(\tau^{\{y\}}<\infty)\\
=&\ \bE\big(e^{-p(\xi(\tau_{y}^{+})-y)}\big)
- \frac{W(y)}{W(x+y)} \bE\big(e^{-p(\xi(\tau_{x+y}^{+})-(x+y))}\big),
\end{align*}
where we use the fact that $\xi$ is spatially homogenous. Therefore, for $x,y>0$,
\[\frac{e^{-px}W(x+y)-W(y)}{W_{p}(x+y)}
=\bE\big(e^{-p(\xi(\tau_{y}^{+})-y)}\big)
- e^{-px}\frac{W_{p}(y)}{W_{p}(x+y)} \bE\big(e^{-p(\xi(\tau_{x+y}^{+})-(x+y))}\big).\]
Applying \eqref{eqn:lem1:2} the proof is completed.
\end{proof}
\begin{rmk}
From \eqref{eqn:lem1:1} above, letting $x\to\infty$ first and then $y\to\infty$, we have
\[
\bP(\tau^{\{y\}}<\infty)
=\frac{1}{W_{p}(\infty)}e^{py}\big(W_{p}(\infty)-W_{p}(y)\big)
\to \frac{1}{\gamma\Phi'(0)}.
\]
For the case of linear Brownian motion with
$\D \psi(s)=\frac{\sigma^{2}}{2} s^{2}-\mu s$ for some $\sigma,\mu>0$, we have
$\gamma=-\psi'(0)=\mu$, $p=\frac{2\mu}{\sigma^{2}}$
and $\Phi'(0)=(\psi'(p))^{-1}=\mu^{-1}$.
Then $\gamma\Phi'(0)=1$.
On the other hand, due to the absence of jumps and due to the positive drift, we always have $\bP(\tau^{\{y\}}<\infty)=1$ for every $y>0$.
\end{rmk}

\subsection{Proof of Proposition \ref{cor:1}}
Since the processes $X$ and $\xi$ are connected via the Lamperti type time transform, in the proofs of our main results, we focus ourself on the study of $\xi$ and its integral functional.
In the proof, we write $\omega(\cdot):=1/R(\cdot)$. The process $\eta(\cdot)$ in \eqref{eqn:integral} is then written as
\[
\eta(t)=\int_{0}^{t}\frac{1}{R(\xi_{s})}ds=\int_{0}^{t}\omega(\xi_{s})ds
\]
and called the weighted occupation time process in \cite{LP2018}, 
where fluctuation theory of the $\omega$-killed spectrally one-sided L\'evy processes is studied.
In this paper, $\omega$ is always positive and locally bounded on $(0,\infty)$.
Recall that $p$ and $\gamma$ are constants defined in \eqref{eqn:defn:c}, respectively, and the Lamperti type identities between the first passage times for $X$ and $\xi$ in \eqref{eqn:passagetimes}. 

For the proof of Proposition \ref{cor:1}, 
the condition of explosion is an immediate consequence of the following result from \cite{Doring2015:integral}: if $\bE(\xi_{1})\in(0,\infty)$ and $f$ is a positive locally integrable function, then $\D \bP\big(\int_{0}^{\infty}f(\xi_{s})\,ds<\infty\big)\in\{0,1\}$ and
\begin{equation}\label{eqn:ky}
\bP\big(\int_{0}^{\infty}f(\xi_{s})\,ds<\infty\big)=0
\quad\Longleftrightarrow\quad
\int^{\infty}f(x)\,dx=\infty.
\end{equation}
Even for $\gamma=\infty$, one can find from the proof
for sufficiency in \cite{Doring2015:integral} that the identity on the right hand side of \eqref{eqn:ky} is still a sufficient condition for  the left hand side to hold.
Therefore, we only focus on the proof of  extinction condition in the first statement of Proposition \ref{cor:1},
where we need the following result that extends the classical result of \eqref{exit_Lap} and leave the proof to interested readers; also see  Remark 4 in \cite{LP2018} and Lemma 4.2 of \cite{LZ:localtime}.

\begin{prop}\label{prop:6}
 For any $b>x>c>0$, we have
\[\bE_{x}\big(e^{-\eta(\tau_{c}^{-})}; \tau_{c}^{-}<\tau_{b}^{+}\big)=\frac{\wo(b,x)}{\wo(b,c)},\]
where $\wo$ is defined as the unique locally bounded function satisfying
\begin{align}
\wo(x,y)
=&\ W(x-y)+\int_{y}^{x}W(x-z)\omega(z)\wo(z,y)\,dz\label{eqn:defn:wo:1}\\
=&\ W(x-y)+\int_{y}^{x}\wo(x,z)\omega(z)W(z-y)\,dz,\label{eqn:defn:wo:2}
\end{align}
\end{prop}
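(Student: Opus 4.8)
The plan is to establish the two integral equations \eqref{eqn:defn:wo:1}--\eqref{eqn:defn:wo:2} simultaneously with the identity $\bE_{x}(e^{-\eta(\tau_{c}^{-})};\tau_{c}^{-}<\tau_{b}^{+})=\wo(b,x)/\wo(b,c)$, using a perturbation (resolvent/Dynkin-type) expansion in the weight $\omega$ and the known two-sided exit formula \eqref{exit_Lap} as the zeroth order term. First I would fix $b>c$ and work on the strip $(c,b)$, regarding the additive functional $\eta(\tau_c^-)=\int_0^{\tau_c^-}\omega(\xi_s)\,ds$ as a multiplicative killing: by the Feynman--Kac/Dynkin formula, $v(x):=\bE_x(e^{-\eta(\tau_c^-)};\tau_c^-<\tau_b^+)$ should solve the renewal-type equation obtained by conditioning on the first time the $\omega$-potential ``acts.'' Concretely, using the occupation formula for the process killed on exiting $(c,b)$ with potential density $u_{(c,b)}(x,z)=W(x-c)W(b-z)/W(b-c)-W(x-z)$ (the classical two-sided resolvent density for a spectrally positive L\'evy process, cf.\ Section 8.2 of \cite{Kyprianou2014:book:levy}), a first-order decomposition gives
\[
v(x)=\frac{W(b-x)}{W(b-c)}-\int_c^b u_{(c,b)}(x,z)\,\omega(z)\,v(z)\,dz,
\]
i.e.\ $v$ is the unique locally bounded solution of a Fredholm equation whose kernel is $u_{(c,b)}(x,\cdot)\omega(\cdot)$.

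Next I would check that $x\mapsto \wo(x,c)/\wo(b,c)$ solves exactly this Fredholm equation. For that, plug the defining equation \eqref{eqn:defn:wo:1} for $\wo(\cdot,c)$ and the companion equation \eqref{eqn:defn:wo:2} into the integral, and simplify using the convolution identity $\int_z^x W(x-u)\omega(u)W(u-z)\,du$ together with the elementary scale-function relation $W(x-c)W(b-z)-W(x-z)W(b-c)=W(b-c)\,u_{(c,b)}(x,z)$. The two integral equations \eqref{eqn:defn:wo:1} and \eqref{eqn:defn:wo:2} are themselves shown equivalent by a standard resolvent-identity/associativity argument for the convolution with kernel $\omega$: iterating either equation produces the same Neumann series $\sum_{n\ge 0}(W*\omega)^{*n}*W$ (with $*$ denoting the appropriate operator composition), which converges because $\omega$ is locally bounded and $W$ is continuous and bounded on compacts, so the series is dominated termwise by the exponential series of a finite constant. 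Existence, local boundedness, and uniqueness of $\wo$ on $[c,b]^2$ all follow from this Neumann series. Finally, matching the two Fredholm solutions and evaluating at the endpoints yields $v(x)=\wo(b,x)/\wo(b,c)$, and letting $x\downarrow c$ recovers the normalization.

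Alternatively, and perhaps more cleanly, I would verify the identity probabilistically: define $M_t:=e^{-\eta(t)}\wo(b,\xi_t)$ for $t<\tau_c^-\wedge\tau_b^+$ and argue, via the compensation formula applied to the jumps of $\xi$ and the It\^o/Dynkin formula for the continuous part, that $(M_t)$ is a local martingale precisely because of \eqref{eqn:defn:wo:1} (the integral equation is exactly the statement that the drift of $e^{-\eta(t)}\wo(b,\xi_t)$ vanishes, after using that $W$ is the zero-resolvent density). Optional stopping at $\tau_c^-\wedge\tau_b^+$, together with $\wo(b,c)=\lim_{x\downarrow c}\wo(b,x)$, $\wo(b,b)$ finite, and the fact that $\xi$ creeps downward (no negative jumps, so $\xi_{\tau_c^-}=c$ on $\{\tau_c^-<\tau_b^+\}$) while it may overshoot upward — but on $\{\tau_b^+<\tau_c^-\}$ the functional contributes nothing since $W(x-y)=0$ for $x<y$ forces the boundary term there to drop out in the right normalization — gives the claimed ratio. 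The main obstacle I anticipate is the bookkeeping around the upward overshoot at $\tau_b^+$: one must argue that $\wo(b,\cdot)$ extended by the convolution formula behaves correctly (in fact vanishes appropriately) above level $b$, or equivalently set up the two-sided problem so that the stopped martingale has no contribution from the event $\{\tau_b^+<\tau_c^-\}$; this is exactly the place where the ``locally bounded'' qualifier and the choice between \eqref{eqn:defn:wo:1} and \eqref{eqn:defn:wo:2} matter, and it is why the statement says the proof is left to the reader with pointers to \cite{LP2018} and \cite{LZ:localtime}.
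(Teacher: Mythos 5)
The paper itself contains no proof of Proposition \ref{prop:6}: it is explicitly left to the reader, with pointers to \cite{LP2018} and \cite{LZ:localtime}. Your first (renewal-equation) route is indeed the standard way to do it and is the right skeleton, but as written it has two gaps. First, the kernel you quote is the wrong one for this paper's process: $\xi$ is spectrally \emph{positive}, so the potential density of $\xi$ killed on exiting $(c,b)$ is the transpose of what you wrote, namely $u_{(c,b)}(x,z)=\frac{W(b-x)W(z-c)}{W(b-c)}-W(z-x)$. This is not cosmetic: with the correct kernel, the verification that $x\mapsto \wo(b,x)/\wo(b,c)$ solves your equation closes in one line using only \eqref{eqn:defn:wo:2}, because $\int_c^bW(z-x)\omega(z)\wo(b,z)\,dz=\wo(b,x)-W(b-x)$ (the integrand vanishes for $z<x$) and $\int_c^bW(z-c)\omega(z)\wo(b,z)\,dz=\wo(b,c)-W(b-c)$, so that
\begin{equation*}
\frac{W(b-x)}{W(b-c)}-\int_c^b\Bigl(\frac{W(b-x)W(z-c)}{W(b-c)}-W(z-x)\Bigr)\omega(z)\frac{\wo(b,z)}{\wo(b,c)}\,dz=\frac{\wo(b,x)}{\wo(b,c)},
\end{equation*}
whereas with the kernel in your sketch the integrals produce $\int_c^bW(b-z)\omega(z)\wo(b,z)\,dz$, which matches neither \eqref{eqn:defn:wo:1} nor \eqref{eqn:defn:wo:2}, and the computation does not close.

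Second, and more substantively, you assert that $v$ is \emph{the unique} locally bounded solution of the Fredholm equation, but nothing in the sketch proves this. The Neumann-series argument is fine for \eqref{eqn:defn:wo:1}--\eqref{eqn:defn:wo:2} because those kernels are triangular (Volterra type); the kernel $u_{(c,b)}(x,z)\omega(z)$ is not, and its $n$-th iterate applied to the constant function $1$ equals $\bE_x\bigl(\eta(\tau_c^-\wedge\tau_b^+)^n\bigr)/n!$, which need not tend to $0$ when $\omega$ is large on $[c,b]$; so uniqueness is not free. It is true, but needs an argument: for instance, the Feynman--Kac resolvent identity $U_{(c,b)}f=U^{\omega}_{(c,b)}f+U^{\omega}_{(c,b)}(\omega\,U_{(c,b)}f)$, with $U_{(c,b)}$ the killed potential and $U^{\omega}_{(c,b)}f(x):=\bE_x\bigl(\int_0^{\tau_c^-\wedge\tau_b^+}e^{-\eta(t)}f(\xi_t)\,dt\bigr)$, shows that any bounded $d$ with $d=-U_{(c,b)}(\omega d)$ vanishes; alternatively, prove the identity for $q\omega$ with $q>0$ small (where your contraction reasoning does apply) and extend to $q=1$ by analyticity of both sides in $q$, or approximate $\omega$ by step functions as in \cite{LP2018}. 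Your alternative martingale route has the opposite feature: the boundary bookkeeping you worry about is actually harmless ($\wo(b,y)=0$ for $y>b$, and $\xi$ can hit $b$ exactly at $\tau_b^+$ only when $\sigma>0$, in which case $W(0)=0$, so the event $\{\tau_b^+<\tau_c^-\}$ contributes nothing), but establishing that $e^{-\eta(t)}\wo(b,\xi_{t\wedge\tau})$ is a local martingale via It\^o/Dynkin requires smoothness of $W$ that is unavailable in general, so in practice that route reduces back to the integral-equation argument.
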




For the $\wo(x,y)$ defined above, we have the following asymptotic results
\begin{lem}\label{prop:7}
For any $x,y>0$, we have
\begin{equation}
\begin{split}
\wo(x,0+):=\lim_{y\to0+}\wo(x,y)<\infty
&\quad\text{if and only if}\quad
\int_{0+}^{1}\omega(z)W_{p}(z)\,dz<\infty,\\
\text{and}\quad \ho(y):=\lim_{x\to\infty}\frac{\wo(x,y)}{W(x)}<\infty
&\quad\text{if and only if}\quad
\int_{1}^{\infty}\omega(z)W_{p}(z)\,dz<\infty.
\label{eqn:defn:who}
\end{split}
\end{equation}
Moreover, the function $\ho$ defined above satisfies
\begin{equation}\label{equ_H}
\begin{split}
\ho(y)&=e^{-py}+ \int_{y}^{\infty}e^{-pz}\omega(z)\wo(z,y)\,dz\\
&=e^{-py}+ \int_{y}^{\infty}\ho(z)\omega(z)W(z-y)\,dz.
\end{split}
\end{equation}
Therefore, if $\omega$ satisfies \aspone, then
$\D \ho(0+):=\lim_{y\to0+}\ho(y)<\infty$.
\end{lem}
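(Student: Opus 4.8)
The plan is to treat the two equivalences symmetrically, exploiting the two integral representations \eqref{eqn:defn:wo:1} and \eqref{eqn:defn:wo:2} for $\wo$ together with the bound $W(x)=e^{px}W_p(x)$ with $W_p$ increasing to $\Phi'(0)<\infty$ (valid since $p>0$). First I would establish the claim about $\wo(x,0+)$. Fix $x>0$ and consider $y\downarrow 0$. Using \eqref{eqn:defn:wo:2}, namely $\wo(x,y)=W(x-y)+\int_y^x\wo(x,z)\omega(z)W(z-y)\,dz$, one sees that $y\mapsto\wo(x,y)$ is monotone in an appropriate sense, and the question of finiteness of the limit reduces to whether the integral $\int_{0+}\wo(x,z)\omega(z)W(z-0)\,dz$ converges. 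Since $\wo(x,z)$ is locally bounded and bounded away from $0$ on compacts, and $W(z-y)\le W(z)=e^{pz}W_p(z)\asymp W_p(z)$ as $z\to0$, the convergence of this integral near $0$ is equivalent to $\int_{0+}^1\omega(z)W_p(z)\,dz<\infty$. For the rigorous version I would first show that if $\int_{0+}^1\omega W_p<\infty$ then the integral equation \eqref{eqn:defn:wo:2} has a finite locally bounded solution obtained by Picard iteration starting from $W(x-\cdot)$, and that this solution extends continuously to $y=0$; conversely, if the integral diverges, a lower bound on $\wo$ (using $\wo(x,z)\ge W(x-z)\ge$ const on a compact) forces $\wo(x,y)\to\infty$.

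For the second equivalence I would divide \eqref{eqn:defn:wo:1} by $W(x)$ and let $x\to\infty$: $\frac{\wo(x,y)}{W(x)}=\frac{W(x-y)}{W(x)}+\int_y^x\frac{W(x-z)}{W(x)}\omega(z)\wo(z,y)\,dz$. Since $\frac{W(x-z)}{W(x)}=e^{-pz}\frac{W_p(x-z)}{W_p(x)}\to e^{-pz}$ pointwise and is dominated by $e^{-pz}$ (because $W_p$ is increasing), the first term tends to $e^{-py}$ and, provided the monotone/dominated passage is justified, the integral tends to $\int_y^\infty e^{-pz}\omega(z)\wo(z,y)\,dz$; the limit $\ho(y)$ is finite iff this last integral converges, and near $\infty$ one has $e^{-pz}\wo(z,y)=W_p(z)e^{-pz}\cdot\frac{\wo(z,y)}{W(z)}$, so once we know $\frac{\wo(z,y)}{W(z)}$ stays bounded the convergence is governed by $\int_1^\infty\omega(z)W_p(z)\,dz$. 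Making this self-referential argument precise is the delicate point: I would set $g(x):=\wo(x,y)/W(x)$, use \eqref{eqn:defn:wo:1} and the renewal-type estimate to show $g$ satisfies a renewal inequality $g(x)\le C+\int_y^x e^{-pz}\omega(z)W_p(z)g(z)\,dz$ plus control of the kernel ratio, and conclude boundedness of $g$ exactly when $\int_1^\infty\omega W_p<\infty$ (a Grönwall/renewal-theorem argument), and divergence of $g$ otherwise. This circular dependence between the finiteness of $\ho(y)$ and the boundedness of $\wo(x,y)/W(x)$ is the main obstacle; it is resolved by first proving the "only if" direction via a lower bound and the "if" direction via the renewal inequality, never assuming boundedness before it is earned.

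Finally, to get the two displayed identities for $\ho$ in \eqref{equ_H}: the first, $\ho(y)=e^{-py}+\int_y^\infty e^{-pz}\omega(z)\wo(z,y)\,dz$, is precisely the limit of \eqref{eqn:defn:wo:1} divided by $W(x)$, using $W(x-y)/W(x)\to e^{-py}$, $W(x-z)/W(x)\to e^{-pz}$, and monotone convergence (the integrand increases as $x\uparrow\infty$ because $W_p$ does), which is legitimate once the right side is known to be finite. The second identity, $\ho(y)=e^{-py}+\int_y^\infty\ho(z)\omega(z)W(z-y)\,dz$, I would obtain by instead dividing \eqref{eqn:defn:wo:2} by $W(x)$: $\frac{\wo(x,y)}{W(x)}=\frac{W(x-y)}{W(x)}+\int_y^x\frac{\wo(x,z)}{W(x)}\omega(z)W(z-y)\,dz$, noting $\frac{\wo(x,z)}{W(x)}=\frac{\wo(x,z)}{W(x-z)}\cdot\frac{W(x-z)}{W(x)}\to\ho(z)\cdot e^{-pz}$... wait, more directly $\frac{\wo(x,z)}{W(x)}\to\ho(z)$ is false dimensionally, so instead I use $\frac{\wo(x,z)}{W(x)}=\frac{W(x-z)}{W(x)}\cdot\frac{\wo(x,z)}{W(x-z)}$ and observe $\frac{\wo(x,z)}{W(x-z)}\to\ho(z)/e^{-pz}\cdot$—to avoid this bookkeeping I would simply apply the already-proven first identity for $\ho$ together with the integral equation \eqref{eqn:defn:wo:1} for $\wo(z,y)$ and Fubini to rearrange the double integral into the claimed single integral, matching the known renewal structure. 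The concluding sentence, that \aspone\ (i.e. $\int_{0+}^\infty\omega W_p<\infty$) gives $\ho(0+)<\infty$, then follows from the second representation in \eqref{equ_H} by letting $y\downarrow0$, monotone convergence, and the bound $W(z-y)\le W(z)$.
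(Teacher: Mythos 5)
Your main argument is essentially the paper's: the renewal/Gr\"onwall inequality $g(x)\le 1+\int_y^x\omega(z)W_p(z)g(z)\,dz$ for $g(x)=\wo(x,y)/W(x)$, giving $\wo(x,y)\le W(x)\exp\big(\int_y^x\omega(z)W_p(z)\,dz\big)$, is exactly the paper's key estimate (obtained there with the integrating factor $G(x)=\exp(-\int_1^x\omega W_p)$), and your ``only if'' directions via the lower bound $\wo(x,y)\ge\int_y^x W(x-z)\omega(z)W(z-y)\,dz\ge e^{p(x-y)}W_p(x-c)\int_y^c\omega(z)W_p(z-y)\,dz$ are also the paper's. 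Two points, however, need repair. First, your detour on the second identity of \eqref{equ_H} rests on a misreading: the convergence $\wo(x,z)/W(x)\to\ho(z)$ is not ``false dimensionally'' --- it is literally the definition of $\ho(z)$ --- so dividing \eqref{eqn:defn:wo:2} by $W(x)$ and letting $x\to\infty$ gives the identity directly (monotone convergence applies because $W(x-z)/W(x)=\bP_z(\tau_0^-<\tau_x^+)$ is increasing in $x$, hence so is $\wo(x,z)/W(x)$; note that ``the integrand increases because $W_p$ does'' is not by itself a valid justification). Your Fubini alternative can be made to work, but the inner integral collapses via \eqref{eqn:defn:wo:2}, not \eqref{eqn:defn:wo:1}.

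Second, and more substantively, your final step for $\ho(0+)<\infty$ under \aspone\ does not close as stated: bounding $W(z-y)\le W(z)$ in the second identity and letting $y\downarrow0$ leaves you with $\int_0^\infty\ho(z)\omega(z)W(z)\,dz$, and since $W(z)=e^{pz}W_p(z)$ this integral is not controlled by $\int^\infty\omega(z)W_p(z)\,dz<\infty$ unless one also knows that $\ho(z)$ decays like $e^{-pz}$ at infinity, which you have not established at this point. The immediate fix --- and the paper's route --- is to use the uniform bound you already have: $\ho(y)\le\exp\big(\int_y^\infty\omega(z)W_p(z)\,dz\big)\le\exp\big(\int_{0+}^\infty\omega(z)W_p(z)\,dz\big)<\infty$ under \aspone, combined with the monotonicity of $\ho$ in $y$, which yields $\ho(0+)<\infty$ without any appeal to \eqref{equ_H}.
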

\begin{proof}[Proof of Lemma \ref{prop:7}]
We start from the existence of the limits in \eqref{eqn:defn:who}.

For $x>y>0$, we have from \eqref{eqn:defn:wo:1} that
\begin{align}
\frac{\wo(x,y)}{W(x)}
=&\ \frac{W(x-y)}{W(x)}+ \int_{y}^{x} \frac{W(x-z)}{W(x)}\omega(z)\wo(z,y)\,dz\label{eqn:prop7:2}\\
\leq&\ 1+ \int_{y}^{x}e^{-pz} \omega(z)\wo(z,y)\,dz\label{eqn:prop7:1}.
\end{align}
Put $G(x):=\exp(-\int_{1}^{x}\omega(z)W_{p}(z)\,dz)$.
Then $G$ is absolutely continuous with respect to Lesbegue measure with $G'(x)=-\omega(x)W_{p}(x)G(x)$ for a.e.-$x$, and for a.e.-$x$
\begin{align*}
&\ \frac{\partial}{\partial x}\Big(G(x) \big(1+\int_{y}^{x}e^{-pz} \omega(z)\wo(z,y)\,dz\big)\Big)\\
=&\ G(x)\Big(
e^{-px}\omega(x)\wo(x,y)-\omega(x)W_{p}(x)
\big(1+\int_{y}^{x}e^{-pz} \omega(z)\wo(z,y)\,dz\big)\Big)\\
=&\ G(x)\omega(x) W_{p}(x)
\Big(\frac{\wo(x,y)}{W(x)} -
\big(1+\int_{y}^{x}e^{-pz}\omega(z)\wo(z,y)dz\big)\Big)\leq 0
\end{align*}
by \eqref{eqn:prop7:1}.
Thus, for $x>y>0$
\[ \D G(x)\big(1+ \int_{y}^{x}e^{-pz} \omega(z)\wo(z,y)\,dz\big)\leq G(y).\]
Making use of \eqref{eqn:prop7:1} again gives
\[
\frac{\wo(x,y)}{W(x)}\leq \frac{G(y)}{G(x)}
=\exp\big(\int_{y}^{x}\omega(z)W_{p}(z)\,dz\big)
\quad\text{for $x>y>0$}.
\]

From the inequality above,
if $\D\int_{1}^{\infty}\omega(z)W_{p}(z)\,dz<\infty$, for fixed $y>0$,
 $\D x\to \frac{\wo(x,y)}{W(x)}$ is bounded from above, which,
together with the fact of being increasing in $x$ by \eqref{eqn:prop7:2},
 gives the existence and finiteness of $\ho(y)$ on $(0,\infty)$.
The equations \eqref{equ_H} for $\ho$ follow from \eqref{eqn:defn:wo:1} and \eqref{eqn:defn:wo:2} by applying the monotone convergence theorem.
The ``if'' part in the first assertion on $\wo(x,0+)$ also follows from the inequality above.

On the other hand,
since $\wo(x,y)\geq W(x-y)$, we have from \eqref{eqn:defn:wo:2} that,
\begin{align*}
\wo(x,y)\geq&\ \int_{y}^{x}W(x-z)\omega(z)W(z-y)\,dz\\
\geq&\ e^{p(x-y)}W_{p}(x-c) \int_{y}^{c}\omega(z)W_{p}(z-y)\,dz
\quad\text{for every $c\in(y,x)$.}
\end{align*}
It follows that, $\wo(x,0+)=\infty$ if $\D \int_{0+}^{1}\omega(z)W_{p}(z)\,dz=\infty$.
Moreover, for every $c>y$,
\[
\liminf_{x\to\infty}\frac{\wo(x,y)}{W(x)}\geq e^{-py}\int_{y}^{c}\omega(z)W_{p}(z-y)\,dz,
\]
Thus,
\[\D \liminf_{x\to\infty}\frac{\wo(x,y)}{W(x)}=\infty\quad\text{ if}\quad \D\int_{1}^{\infty}\omega(z)W_{p}(z)\,dz=\infty,\]
which proves the ``only if'' part in the assertions.
$\ho(0+)<\infty$ under \aspone\ also follows. This completes the proof.
\end{proof}



\begin{proof}[Proof of Proposition \ref{cor:1}(extinction condition)]
Letting $c\to0+$ in Proposition \ref{prop:6}, we have
\[
\bE_{x}\big(e^{-\eta(\tau_{0}^{-})}; \tau_{0}^{-}<\tau_{b}^{+}\big)=\lim_{c\to0+}\frac{\wo(b,x)}{\wo(b,c)}=\frac{\wo(b,x)}{\wo(b,0+)},
\]
for every $b>x>0$.
By Lemma \ref{prop:7},
\[\bP_{x}\big(\eta(\tau_{0}^{-})<\infty, \tau_{0}^{-}<\infty\big)>0\quad\text{ if and only if}\quad \D\int_{0+}^{1}\omega(z)W_{p}(z)\,dz<\infty.\]

On the other hand, if $\D\int_{0+}^{1}\omega(z)W_{p}(z)\,dz<\infty$, we also have for every $q>0$,
\[
\bE_{x}\big(e^{-q\cdot\eta(\tau_{0}^{-})}; \tau_{0}^{-}<\tau_{b}^{+}\big)=\frac{W^{(q\omega)}(b,x)}{W^{(q\omega)}(b,0)}>0,
\]
where $W^{(q\omega)}$ is the generalized scale function in \eqref{eqn:defn:wo:1} with respect to $q\omega(\cdot)$. By the scale function identity, for every $x,y,q,r>0$,
\[
W^{(q\omega)}(x,y)-W^{(r\omega)}(x,y)=(q-r)\int_{y}^{x}W^{(q\omega)}(x,z)\omega(z)W^{(r\omega)}(z,y)\,dz,
\]
see Lemma 4.3 of \cite{LZ:localtime}, we have that $q\to W^{(q\omega)}(x,y)$ is increasing.
It is not hard to find that $W^{(q\omega)}(x,y)\to W(x-y)$ as $q\to0+$, which shows that
\[\bP_{x}(\eta(\tau_{0}^{-})<\infty, \tau_{0}^{-}<\tau_{b}^{+})=\bP_{x}\big(\tau_{0}^{-}<\tau_{b}^{+}\big)\]
 and the second assertion is proved.
\end{proof}

Applying Lemma \ref{prop:7} to Proposition \ref{prop:6} by letting $b\to\infty$, 
we also have the the following results on the downward passage time of $X$ and $\ho$,
and we leave the proof to interested readers,
where $p$ defined in \eqref{eqn:defn:c} for the underlying $\xi$ can be $0$ and $\D \lim_{z\to\infty}W_{p}(z)=\Phi'(0)\leq \infty$.
\begin{cor}\label{prop:3}
Recall $p=\Phi(0)\geq0$ defined in \eqref{eqn:defn:c}.
If $\D\int_{1}^{\infty}\frac{W_{p}(z)}{R(z)}\,dz<\infty$, let $\ho$ be defined in \eqref{eqn:defn:who}.
Then
\[
\bE_{x}\big(e^{-T_{c}^{-}}; T_{c}^{-}<\infty\big)=
\frac{\ho(x)}{\ho(c)},
\]
for every $x>c>0$.
If $R$ satisfies \aspone, then 
the identity also holds for $c=0$.
\end{cor}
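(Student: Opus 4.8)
\emph{Proof proposal.} The plan is to let $b\to\infty$ in the exit identity of Proposition \ref{prop:6}. Write $\omega:=1/R$ and recall $\eta(t)=\int_0^t\omega(\xi_s)\,ds$ from \eqref{eqn:integral}. Since $X$ drops below level $c$ exactly when $\xi$ does, and then the two passage times agree, the Lamperti identities \eqref{eqn:passagetimes} give $e^{-T_c^-}\mathbf{1}(T_c^-<\infty)=e^{-\eta(\tau_c^-)}\mathbf{1}(\tau_c^-<\infty)$ $\bP_x$-a.s., so it suffices to evaluate $\bE_x\big(e^{-\eta(\tau_c^-)};\tau_c^-<\infty\big)$. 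By Proposition \ref{prop:6}, for $b>x>c>0$,
\[
\bE_x\big(e^{-\eta(\tau_c^-)};\tau_c^-<\tau_b^+\big)=\frac{\wo(b,x)}{\wo(b,c)}.
\]

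First I would handle the left-hand side. As the paths of $\xi$ are right-continuous with left limits, the running supremum is finite on compact time sets, so $\tau_b^+\uparrow\infty$ $\bP_x$-a.s.\ as $b\to\infty$ (this holds also when $p=0$, whether $\xi$ oscillates or drifts to $-\infty$), whence $\{\tau_c^-<\tau_b^+\}\uparrow\{\tau_c^-<\infty\}$; since $0\le e^{-\eta(\tau_c^-)}\le1$, bounded convergence yields
\[
\lim_{b\to\infty}\bE_x\big(e^{-\eta(\tau_c^-)};\tau_c^-<\tau_b^+\big)=\bE_x\big(e^{-\eta(\tau_c^-)};\tau_c^-<\infty\big)=\bE_x\big(e^{-T_c^-};T_c^-<\infty\big).
\]
For the right-hand side, the hypothesis $\int_1^\infty W_p(z)/R(z)\,dz<\infty$ is exactly $\int_1^\infty\omega(z)W_p(z)\,dz<\infty$, so Lemma \ref{prop:7} ensures $\wo(b,y)/W(b)\to\ho(y)$ with $\ho(y)$ finite for every $y>0$, while $\ho(y)\ge e^{-py}>0$ by \eqref{equ_H}. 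Dividing numerator and denominator by $W(b)$ and letting $b\to\infty$ then gives $\bE_x(e^{-T_c^-};T_c^-<\infty)=\ho(x)/\ho(c)$ for all $x>c>0$.

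For the statement at $c=0$ under \aspone, I would let $c\downarrow0$ in the identity just proved. On the right, by Lemma \ref{prop:7} the limit $\ho(0+):=\lim_{y\to0+}\ho(y)$ exists and is finite under \aspone, while $\ho(0+)\ge1$ since $\ho(y)\ge e^{-py}$, so $\ho(x)/\ho(c)\to\ho(x)/\ho(0+)$. On the left, since $\xi$, and hence $X$, has no negative jumps, $X$ creeps downward, so $T_c^-$ increases, as $c\downarrow0$, to $T_0^-=\inf\{t:X_t=0\}$ (both being $\infty$ when $X$ never reaches $0$). Then $e^{-T_c^-}\mathbf{1}(T_c^-<\infty)$ is bounded by $1$ and decreases to $e^{-T_0^-}\mathbf{1}(T_0^-<\infty)$, so bounded convergence gives $\bE_x(e^{-T_c^-};T_c^-<\infty)\to\bE_x(e^{-T_0^-};T_0^-<\infty)$, which is the asserted identity with $\ho(0):=\ho(0+)$.

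The $b\to\infty$ step is routine and the substantive input on the right-hand side is Lemma \ref{prop:7}, which we may take as given. I expect the only genuinely delicate point to be the assertion $\lim_{c\downarrow0}T_c^-=T_0^-$ in the second part: it rests on the absence of negative jumps for $\xi$, hence on the creeping of both $\xi$ and $X$ toward lower levels, together with the continuity and monotonicity of the Lamperti time change; this is where I would take most care, although I expect it to require only a short, standard argument.
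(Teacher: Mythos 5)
Your proposal is correct and follows exactly the route the paper intends (it explicitly leaves the proof to the reader as ``apply Lemma \ref{prop:7} to Proposition \ref{prop:6} by letting $b\to\infty$''): divide numerator and denominator by $W(b)$, use $\tau_b^+\uparrow\infty$ with bounded convergence on the left, and then let $c\downarrow0$ under \aspone. The one point you flag as delicate, $T_c^-\uparrow T_0^-$, does go through as you expect: since $\xi$ has no negative jumps it creeps, so $\xi(\tau_c^-)=c$, quasi-left-continuity gives $\xi(\lim_c\tau_c^-)=0$, and regularity of $0$ for $(-\infty,0)$ (as $\xi$ is not a subordinator) yields $\tau_c^-\uparrow\tau_0^-$, hence $\eta(\tau_c^-)\uparrow\eta(\tau_0^-)=T_0^-$ — the same fact the paper implicitly uses when letting $c\to0+$ in its proof of Proposition \ref{cor:1}.
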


\begin{rmk}\label{rmk:4}
If $\D\int_{1}^{\infty} \frac{W_{p}(z)}{R(z)}dz<\infty$,
we can always express $\ho$ in \eqref{equ_H} in terms of sum of a sequence of integrals.
If however $\D \int_{1}^{\infty}\frac{W_{p}(z)}{R(z)}dz=\infty$, the Laplace transform in Corollary \ref{prop:3} holds for some increasing $\tilde{H}^{(\omega)}$ on $(0,\infty)$ satisfying the singular equation
\[
\tilde{H}^{(\omega)}(x)=\int_{x}^{\infty}\tilde{H}^{(\omega)}(z)\omega(z)W(z-x)dz.
\]

To evaluate $\ho$, if  $\D R(x)=\Big(\int_{0}^{\infty} e^{-xt}\mu(dt)\Big)^{-1}$ 
for some positive measure $\mu$ on $(0,\infty)$ such that the condition \aspone\ is fulfilled, that is, 
\[ \int_{0+}^{\infty}\frac{W_{p}(y)}{R(y)}\,dy
=\int_{0+}^{\infty}\mu(dt)\int_{0}^{\infty}e^{-yt}W_{p}(y)\,dy
=\int_{0}^{\infty}\frac{\mu(dt)}{\psi_{p}(t)}
<\infty,\]
then $\D \ho(y)=\int_{0}^{\infty} e^{-ys}\nu(ds)$ for some positive measure $\nu$ on $[p,\infty)$ such that
\[\nu(ds)=\delta_{\{p\}}(ds)+\frac{\nu*\mu(ds)}{\psi(s)}\quad\text{for $s\geq p$.}\]
If $\mu(ds)=h(s)ds$ for some measurable $h\geq0$ on $(0,\infty)$, 
then $\D \ho(y)=e^{-py}+ \int_{p}^{\infty} e^{-ys}k(s)\,ds$ where $k$ is a locally integrable function on $(p,\infty)$ satisfying a Volterra equation
 \[\D k(s)=\frac{1}{\psi(s)}\big(h(s)+ \int_{p}^{s}h(s-r)k(r)\,dr\big) \text{\quad for\quad} s>p.\]
\end{rmk}

\subsection{Proof of Theorem \ref{thm:4}}
An application of Proposition \ref{cor:1} shows that, under the condition $p,\gamma\in(0,\infty)$ and \aspzero, $\{T_{\infty}^{+}<\infty\}=\{\tau_{0}^{-}=\infty\}$ and has a positive probability, the moment function $m_{n}$ defined in Theorem \ref{thm:4} can now be written in terms of $\xi$ as
\[m_{n}(x)=\bE_{x}\big(\eta^{n}(\infty); \tau_{0}^{-}=\infty\big)\leq \infty\quad\text{for $n\in\mathbb{N}$ and $x\geq0$}.\]
The following proposition \ref{prop:2} on $m_{n}$ is frequently used in our proofs.
Similar result can be found in Lemma 8.11.1 of \cite{Bingham1987:book}, and here we provide a proof for readers' convenience.

\begin{prop}\label{prop:2}
Let $U$ be defined in \eqref{eqn:resolvent}.
We have $m_{0}(x)=1-e^{-px}$ and
\[m_{n}(x)=n \int_{0}^{\infty}U(x,dy)\omega(y)m_{n-1}(y)
\quad\text{for $x\geq0$ and $n\geq1$.}\]
\end{prop}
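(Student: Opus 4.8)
The plan is to establish the recursion by conditioning on the first passage below $0$ when it happens, using the strong Markov property of $\xi$ together with the Lamperti time change. Recall that under $p,\gamma\in(0,\infty)$ and \aspzero\ we have $m_{n}(x)=\bE_{x}(\eta^{n}(\infty); \tau_{0}^{-}=\infty)$, where $\eta(\infty)=\int_{0}^{\infty}\omega(\xi_{s})\,ds$ on $\{\tau_{0}^{-}=\infty\}$. The base case $m_{0}(x)=\bP_{x}(\tau_{0}^{-}=\infty)=1-e^{-px}$ follows directly from the second identity in \eqref{exit_Lap} with $q=0$, since $\bP_{x}(\tau_{0}^{-}<\infty)=e^{-\Phi(0)x}=e^{-px}$.

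For the recursion, first I would write, for $n\geq 1$,
\[
\eta^{n}(\infty)=n\int_{0}^{\infty}\eta^{n-1}(t)\,d\eta(t)
=n\int_{0}^{\infty}\Big(\int_{t}^{\infty}\omega(\xi_{s})\,ds\Big)^{n-1}\omega(\xi_{t})\,dt,
\]
using that $t\mapsto \eta(\infty)-\eta(t)=\int_{t}^{\infty}\omega(\xi_{s})\,ds$ and $d\eta(t)=\omega(\xi_{t})\,dt$. Taking $\bE_{x}(\,\cdot\,;\tau_{0}^{-}=\infty)$ and applying Fubini (justified because the integrand is nonnegative; finiteness of the total will come out of the estimate, or one can argue on $\{\tau_{0}^{-}=\infty\}$ where everything is well-defined) gives
\[
m_{n}(x)=n\int_{0}^{\infty}\bE_{x}\Big(\mathbf{1}(t<\tau_{0}^{-})\,\omega(\xi_{t})\,\big(\eta(\infty)-\eta(t)\big)^{n-1}\mathbf{1}(\tau_{0}^{-}=\infty)\Big)\,dt.
\]
On $\{t<\tau_{0}^{-}\}$, by the Markov property at time $t$, conditionally on $\xi_{t}=y$ the quantity $\eta(\infty)-\eta(t)$ has the law of $\int_{0}^{\infty}\omega(\xi_{s})\,ds$ under $\bP_{y}$ and the event $\{\tau_{0}^{-}=\infty\}$ becomes $\{\tau_{0}^{-}=\infty\}$ for the shifted process; hence the inner conditional expectation equals $\omega(y)\,\bE_{y}\big((\eta(\infty))^{n-1};\tau_{0}^{-}=\infty\big)=\omega(y)m_{n-1}(y)$. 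Integrating against the law of $\xi_{t}$ killed at $\tau_{0}^{-}$ and then over $t\in(0,\infty)$ produces exactly $\int_{0}^{\infty}U(x,dy)\,\omega(y)m_{n-1}(y)$ by the definition \eqref{eqn:resolvent} of the killed potential measure $U$. This yields the claimed formula $m_{n}(x)=n\int_{0}^{\infty}U(x,dy)\,\omega(y)m_{n-1}(y)$.

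The main obstacle is the interchange of expectation and the multiple integral, and more precisely ensuring all quantities are finite so that the Markov-property splitting is legitimate rather than a formal manipulation of possibly infinite expressions. I would handle this by first proving the identity for $n=1$ (where $m_{1}(x)=\int_{0}^{\infty}U(x,dy)\omega(y)m_{0}(y)=\int_{0}^{\infty}u(x,y)\omega(y)(1-e^{-py})\,dy$, whose finiteness under \aspone\ follows from \eqref{scale_limit} and the resolvent formula $u(x,y)=e^{-px}W(y)-W(y-x)\le e^{-px}W_{p}(y)e^{py}$, combined with the renewal bound from Lemma \ref{lem:1}), and then proceed by induction: once $m_{n-1}$ is known to be finite, the nonnegativity of the integrand makes Fubini and the Markov-property step unconditionally valid, and the resulting bound propagates finiteness to $m_{n}$. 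This inductive finiteness is precisely what feeds into the summation $\sum q^{n}m_{n}(x)/n!$ in Theorem \ref{thm:4}, so it is worth isolating cleanly here.
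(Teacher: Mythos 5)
Your argument is essentially the paper's own proof: the base case from \eqref{exit_Lap} with $q=0$, the pathwise identity $\eta^{n}(\infty)\mathbf{1}(\tau_{0}^{-}=\infty)=n\int_{0}^{\infty}\big(\eta(\infty)-\eta(t)\big)^{n-1}\omega(\xi_{t})\mathbf{1}(t<\tau_{0}^{-}=\infty)\,dt$, then the Markov property at time $t$ and Fubini against the killed potential $U(x,dy)$. The closing finiteness discussion is harmless but unnecessary for this statement, since all terms are nonnegative and the identity holds in $[0,\infty]$ by Tonelli (finiteness under \aspone\ is the business of Theorem \ref{thm:4}).
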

\begin{proof}[Proof of Proposition \ref{prop:2}]
The expression for $m_{0}(x)$ follows from \eqref{exit_Lap} by taking $q=0$.

Since $\tau_{0}^{-}=t+ \tau_{0}^{-}\circ\theta_{t}$ on the set $\{t<\tau_{0}^{-}\}$ for the shifting operator $\theta_{t}$,
we have from $d\eta(t)=\omega(\xi_{t})\,dt$ that, on the set $\{\tau_{0}^{-}=\infty\}$,
\begin{align*}
\ \eta^{n}(\infty)\cdot \mathbf{1}(\tau_{0}^{-}=\infty)
&= n \int_{0}^{\infty}\big(\eta(\infty)-\eta(t)\big)^{n-1}\omega(\xi_{t})\cdot \mathbf{1}(t<\tau_{0}^{-}=\infty)\,dt\\
&= n \int_{0}^{\infty} \big(\eta^{n-1}(\infty)\mathbf{1}(\tau_{0}^{-}=\infty)\big)\circ \theta_{t}\cdot \omega(\xi_{t})\cdot \mathbf{1}(t<\tau_{0}^{-})\,dt.
\end{align*}
By the Markov property at time $t>0$ and Fubini's theorem, we complete the proof.
\end{proof}

\begin{rmk}\label{rmk:3}
If $\gamma\in(0,\infty)$, applying Lemma \ref{lem:1} to Proposition \ref{prop:2} we have
\[
\bE_{x}\big(\eta(\infty); \tau_{0}^{-}=\infty\big)<\infty
\quad\text{if and only if}\quad
\int_{0+}^{\infty} \omega(y)
\big(1\wedge ( y W_{p}(y))\big)\,dy<\infty.
\]
Using the idea similar to Proposition \ref{prop:2} in the following, we have for $x>0$,
\[\bE_{x}\big(\eta(\tau_{0}^{-}); \tau_{0}^{-}<\infty\big)=\int_{0}^{\infty}U(x,dy)\omega(y)\bP_{y}(\tau_{0}^{-}<\infty).\]
Then Lemma \ref{lem:1} shows that if $\gamma\in(0,\infty)$,
\[\begin{split}
\bE_{x}\big(\eta(\tau_{0}^{-}); \tau_{0}^{-}<\infty\big)<\infty
&\quad\text{if and only if}\quad
\int_{0+}^{\infty} \omega(y)\big(e^{-py}\wedge W_{p}(y)\big)\,dy<\infty,\\
\text{and}\quad\bE_{x}\big(\eta(\tau_{0}^{-})\big)<\infty
&\quad\text{if and only if}\quad
\int_{0+}^{\infty} \omega(y)W_{p}(y)\,dy<\infty.
\end{split}\]
We refer to \cite{LPZ:integral} for more detailed discussions on the related results.
Notice that the 0-1 law in the first part of Proposition \ref{cor:1} can also be proved by showing that
$$\bE\big(\eta(\tau_{0}^{-}); \tau_{0}^{-}<\tau_{b}^{+}\big)<\infty
\quad\text{if and only if}\quad 
\int_{0+}^{1}\omega(z)W_{p}(z)\,dz<\infty.$$
\end{rmk}

We are now ready to prove Theorem \ref{thm:4}.
Notice that $\omega$ in Theorem \ref{thm:4} is assumed to satisfy \aspone,
which fulfills the condition of Lemma \ref{prop:7}, and under which
\[\eta(\tau_{0}^{-})=\eta(\tau_{0}^{-})\mathbf{1}(\tau_{0}^{-}<\infty)+\eta(\infty)\mathbf{1}(\tau_{0}^{-}=\infty)<\infty\quad\text{$\bP_{x}$-a.s.}\]
for any $x>0$ as shown in Proposition \ref{cor:1}.

\begin{proof}[Proof of Theorem \ref{thm:4}]
The moment generating function of $T_{\infty}^{+}$ is obtained
from Proposition \ref{prop:2}.
From \eqref{eqn:lem1:1}, we know that the density of $U$ is bounded by
\begin{equation}
u(x,y)=e^{-px}W(y)-W(y-x)\leq W_{p}(y)
\quad \text{for all $x,y>0$}.
\end{equation}
Therefore, with $m_{0}(x)=m_{0}(x)=1-e^{-px}\leq 1$, we have
\[
m_{n}(x)\leq n \int_{0}^{\infty} \omega(y)W_{p}(y) m_{n-1}(y)\,dy
\leq n!\times \Big(\int_{0}^{\infty} \omega(y)W_{p}(y) \Big)^{n}
\quad\text{for all $n\geq1$}.
\]
 Since Carleman's condition on the moments is satisfied, the distribution of
$T_{\infty}^{+}=\eta(\infty)$ on the set $\{T_{\infty}^{+}<T_{0}^{-}\}=\{\tau_{0}^{-}=\infty\}=\{T_{0}^{-}=\infty\}=\{T_{\infty}^{+}<\infty\}$ under the condition \aspone\ is uniquely determined by its moments $(m_n)_{n\geq0}$, and the desired conclusion follows. \end{proof}

\subsection{Proofs of Theorems \ref{thm:1} and \ref{thm:2}}
	


To compare the asymptotic behaviors of functions at infinity,
we write as usual
 \[\begin{split}
f(x)\sim g(x) &\quad\text{ as}\quad x\rightarrow\infty
\quad\text{ if}\quad \D\lim_{x\to\infty}{f(x)}/{g(x)}=1,\\
f(x)=o(g(x)) &\quad\text{ as}\quad x\rightarrow\infty
\quad\text{ if}\quad \D\lim_{x\to\infty}{f(x)}/{g(x)}=0,
 \end{split}\]
where $g(x)\neq 0$ for $x$ large enough. 
We always assume that $p,\gamma\in(0,\infty)$ 
and the weight function $\omega$ satisfies \aspzero\ and \asptwo.
We prove in Proposition \ref{prop:1} the asymptotic results  about the tail integrals for functions of this kind, that is, 
\begin{equation}\label{neweqn:1}
\int_{x+y}^{\infty}f(z)\,dz\Big/\int_{y}^{\infty}f(z)\,dz\to \exp(-\lambda x)\quad\text{ as}\,\, y\to\infty,
\end{equation}
for some constant $\lambda\in[0,\infty)$ and every $x>0$. 
Since the condition \eqref{neweqn:1} is closely related to regularly varying functions as shown in Remark \ref{rmk:5}, 
similar results for ``Stieltjes-integral forms'' can be found in Theorem 1.6.4 and 1.6.5 of \cite{Bingham1987:book}. Recall the following Karamata's theorem from Theorem 1.5.11 of \cite{Bingham1987:book}.

\begin{prop}[Karamata's Theorem]
Let $f$ vary regularly with index $\rho$, and be locally bounded in $[c,\infty)$. Then 
\begin{enumerate}[(i)]
\item for any $\sigma\geq -(\rho+1)$, 
\[
x^{\sigma+1}f(x)\Big/ \int_{c}^{x}t^{\sigma}f(t)dt\to \sigma+\rho+1 \,\,\text{as}\,\,x\to\infty;
\]
\item for any $\sigma<-(\rho+1)$ (and for $\sigma=-(\rho+1)$ if $\int^{\infty} t^{-(\rho+1)}f(t)dt<\infty)$
\[
x^{\sigma+1}f(x)\Big/ \int_{x}^{\infty}t^{\sigma}f(t)dt\to -(\sigma+\rho+1) \,\,\text{as}\,\,x\to\infty.
\]
\end{enumerate}
\end{prop}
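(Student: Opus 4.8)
This is the classical Karamata theorem, and the plan is to reproduce the argument behind Theorem 1.5.11 of \cite{Bingham1987:book}. First I would strip off the power index: writing $f(x)=x^{\rho}L(x)$ with $L$ slowly varying and setting $h(t):=t^{\sigma}f(t)=t^{\eta}L(t)$ with $\eta:=\sigma+\rho$, the function $h$ is regularly varying with index $\eta$ and $x^{\sigma+1}f(x)=x\,h(x)$. Thus (i) reduces to showing $x\,h(x)\big/\!\int_{c}^{x}h(t)\,dt\to\eta+1$ whenever $\eta\geq-1$, and (ii) to showing $\int_{x}^{\infty}h(t)\,dt<\infty$ together with $x\,h(x)\big/\!\int_{x}^{\infty}h(t)\,dt\to-(\eta+1)$ whenever $\eta<-1$ (or $\eta=-1$ under the stated integrability hypothesis). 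I would then prove these two normalized statements.

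Step one is the case $\eta\neq-1$. The idea is the substitution $t=xs$, which gives
\[
\int_{c}^{x}h(t)\,dt=x^{\eta+1}L(x)\int_{c/x}^{1}s^{\eta}\,\frac{L(xs)}{L(x)}\,ds,
\qquad
\int_{x}^{\infty}h(t)\,dt=x^{\eta+1}L(x)\int_{1}^{\infty}s^{\eta}\,\frac{L(xs)}{L(x)}\,ds.
\]
On every compact $s$-interval bounded away from the bad endpoint one has $L(xs)/L(x)\to1$ \emph{uniformly}, by the uniform convergence theorem for slowly varying functions (\cite{Bingham1987:book}), so that piece of the integral converges to $\int s^{\eta}\,ds$. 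For the remaining piece --- near $s=0$ in (i) when $\eta>-1$, near $s=\infty$ in (ii) --- I would invoke Potter's bounds: given $\varepsilon>0$ there is $A<\infty$ with $L(xs)/L(x)\leq A\max(s^{\varepsilon},s^{-\varepsilon})$ once $x$ and $xs$ are large, and choosing $\varepsilon$ small enough that $s^{\eta-\varepsilon}$ is integrable at $0$ (respectively $s^{\eta+\varepsilon}$ integrable at $\infty$, which also shows the tail integral is finite when $\eta<-1$) makes that piece uniformly negligible. Letting $x\to\infty$ then yields $\int_{c}^{x}h(t)\,dt\sim x^{\eta+1}L(x)/(\eta+1)=x\,h(x)/(\eta+1)$ when $\eta>-1$, and $\int_{x}^{\infty}h(t)\,dt\sim -x\,h(x)/(\eta+1)$ when $\eta<-1$, which are the asserted limits.

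Step two is the boundary index $\eta=-1$, where $h(t)=t^{-1}L(t)$ and the target limit is $0$. Here I would use the facts (\cite{Bingham1987:book}, cf.\ Prop.\ 1.5.9a) that $\ell(x):=\int_{c}^{x}t^{-1}L(t)\,dt$ is slowly varying with $L(x)/\ell(x)\to0$ --- whether $\ell(\infty)=\infty$, or $\ell(\infty)<\infty$, in which case $\int_{x}^{2x}t^{-1}L(t)\,dt\sim L(x)\log2\to0$ forces $L(x)\to0$ --- so that $x\,h(x)\big/\!\int_{c}^{x}h(t)\,dt=L(x)/\ell(x)\to0$, giving (i) at $\sigma=-(\rho+1)$; and, under the extra hypothesis of (ii), the parallel statement that $\tilde\ell(x):=\int_{x}^{\infty}t^{-1}L(t)\,dt$ is slowly varying with $L(x)/\tilde\ell(x)\to0$, giving (ii) at $\sigma=-(\rho+1)$.

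I expect the only genuine obstacle to be the uniform control of the ``non-main'' part of the transformed integral in Step one, i.e.\ justifying the exchange of limit and integration; this requires the quantitative Potter bounds rather than the bare pointwise definition of regular variation. The boundary case rests on the companion fact that a slowly varying function is $o$ of its logarithmic integral, which is the other place where pointwise convergence alone does not suffice.
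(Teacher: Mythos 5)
Your proposal is correct: it reproduces the standard proof of Karamata's theorem (reduction to $h(t)=t^{\sigma+\rho}L(t)$, the uniform convergence theorem plus Potter's bounds away from the boundary index, and the fact that a slowly varying function is $o$ of its logarithmic integral at the boundary index $\sigma=-(\rho+1)$), which is exactly the argument behind Theorem 1.5.11 of Bingham--Goldie--Teugels that the paper quotes without proof. The only detail to add is that in part (i) the initial segment $\int_{c}^{X}t^{\sigma}f(t)\,dt$, where $X$ is the Potter threshold, is a finite constant by local boundedness and is negligible against $x^{\sigma+\rho+1}L(x)\to\infty$, so the Potter estimate is only needed on the range where both $x$ and $xs$ are large.
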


\begin{prop}\label{prop:1}
Suppose that a positive function $f$ has finite tail integral
and its tail integral satisfies the condition \eqref{neweqn:1} for some $\lambda\in[0,\infty)$,
\begin{enumerate}[(A)]
\item\label{a} If $\lambda=0$, then for any $\alpha>0$ we have
\[\D \int_{1}^{\infty}e^{\alpha y}f(y)\,dy=\infty, \quad
\D \int_{1}^{\infty}e^{\alpha y}\int_{y}^{\infty}f(z)\,dz\,dy=\infty\]
\[ \text{and}\quad
e^{-\alpha x}\int_{1}^{x}e^{\alpha y}f(y)\,dy = o\big(\int_{x}^{\infty}f(y)\,dy\big)
\quad \text{as $x\to\infty$},
\]
\item\label{b} If $\lambda>0$, then for any $\alpha<\lambda$ we have
\[\D \int_{1}^{\infty}e^{\alpha y}f(y)\,dy<\infty,\quad
\D \int_{1}^{\infty}e^{\alpha y}\int_{y}^{\infty}f(z)\,dz\,dy<\infty\]
and as $x\to\infty$,
\[\int_{x}^{\infty} e^{\alpha(y-x)}\int_{y}^{\infty}f(z)\,dzdy
\sim
\lambda^{-1} \int_{x}^{\infty}e^{\alpha(y-x)}f(y)\,dy
\sim \frac{1}{\lambda-\alpha} \int_{x}^{\infty}f(y)\,dy.\]
\item\label{c} If $\lambda>0$, denoting by $k$ the inverse of function $x\to\int_{x}^{\infty}f(y)\,dy$, i.e. $\int_{k(x)}^{\infty}f(y)\,dy=x$ for all small $x>0$, we have
\[k(x)\sim -\lambda^{-1}\log x\quad\text{as $x\to0+$}.\]
\end{enumerate}
\end{prop}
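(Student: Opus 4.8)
The plan is to treat the three parts by reducing everything to the single quantitative fact contained in \eqref{neweqn:1}, namely that $F(x):=\int_x^\infty f(y)\,dy$ satisfies $F(x+t)/F(x)\to e^{-\lambda t}$ as $x\to\infty$ for every $t>0$; equivalently, by Remark \ref{rmk:5}, the function $x\mapsto F(\log x)$ is regularly varying with index $-\lambda$, and $F(x)=e^{-(\lambda+\epsilon(x))x}$ with $\epsilon(x)\to0$. From this representation both the convergence and divergence statements are elementary once we choose the parameters correctly. For part \ref{a} with $\lambda=0$: given $\alpha>0$, the representation gives $F(y)=e^{-\epsilon(y)y}$ with $\epsilon(y)\to0$, so $e^{\alpha y}\int_y^\infty f(z)\,dz = e^{(\alpha-\epsilon(y))y}\to\infty$, which is not integrable, giving the second divergence; the first divergence $\int_1^\infty e^{\alpha y}f(y)\,dy=\infty$ follows because $\int_1^x e^{\alpha y}f(y)\,dy \ge e^{\alpha x/2}\int_{x/2}^x f(y)\,dy$ and $\int_{x/2}^x f = F(x/2)-F(x)\to$ a positive fraction of $F(x/2)$ is not the right bound — instead I would integrate by parts: $\int_1^x e^{\alpha y}f(y)\,dy = [-e^{\alpha y}F(y)]_1^x + \alpha\int_1^x e^{\alpha y}F(y)\,dy$, so the two "first" and "second" integrals differ by a boundary term of order $e^{\alpha x}F(x)$, and since the second diverges so does the first. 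For the little-$o$ claim, the same integration by parts gives $e^{-\alpha x}\int_1^x e^{\alpha y}f(y)\,dy \le \alpha \,e^{-\alpha x}\int_1^x e^{\alpha y}F(y)\,dy + (\text{const})e^{-\alpha x}$, and splitting the integral at a point $x-M$ and using $F(y)\le C F(x)e^{\alpha(x-y)/2}$ — no: more carefully, I would split at $\delta x$ and use that $F$ is eventually decreasing together with $F(y)/F(x) = e^{(\lambda+o(1))(x-y)}$ on $[\delta x,x]$ with $\lambda=0$, giving the tail contribution $\sim \alpha^{-1}F(x)\cdot o(1)$ plus a head contribution of order $e^{-\alpha x}\cdot e^{\alpha\delta x}$ which is $o(F(x))=o(e^{-\epsilon(x)x})$ for $\delta$ small; letting $\delta\to0$ finishes it.

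For part \ref{b} with $\lambda>0$ and $\alpha<\lambda$: the representation $F(y)=e^{-(\lambda+o(1))y}$ makes $e^{\alpha y}F(y)$ integrable, giving the second convergence, and then $\int_1^\infty e^{\alpha y}f(y)\,dy$ is finite by the same integration-by-parts identity as above (the boundary term $e^{\alpha x}F(x)\to0$). For the two asymptotic equivalences I would apply Karamata's theorem after the substitution $y=\log u$. Writing $g(u):=F(\log u)$, which is regularly varying at $\infty$ with index $-\lambda$, one has $\int_x^\infty e^{\alpha y}f(y)\,dy = -\int_x^\infty e^{\alpha y}\,dF(y)$; integrating by parts this equals $e^{\alpha x}F(x)+\alpha\int_x^\infty e^{\alpha y}F(y)\,dy$, and the change of variables turns $\int_x^\infty e^{\alpha y}F(y)\,dy$ into $\int_{e^x}^\infty u^{\alpha-1}g(u)\,du$, to which Karamata's Theorem (ii) with $\sigma=\alpha-1$, $\rho=-\lambda$, $\sigma<-(\rho+1)\Leftrightarrow\alpha<\lambda$, applies to give $\int_{e^x}^\infty u^{\alpha-1}g(u)\,du\sim (\lambda-\alpha)^{-1}e^{\alpha x}g(e^x)=(\lambda-\alpha)^{-1}e^{\alpha x}F(x)$. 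Hence $\int_x^\infty e^{\alpha y}f(y)\,dy\sim \big(1+\tfrac{\alpha}{\lambda-\alpha}\big)e^{\alpha x}F(x)=\tfrac{\lambda}{\lambda-\alpha}e^{\alpha x}F(x)$, which rearranges to $\int_x^\infty e^{\alpha(y-x)}f(y)\,dy\sim\tfrac{\lambda}{\lambda-\alpha}F(x)$; the special case $\alpha=0$ trivially reads $F(x)\sim F(x)$ and, combined with the general case, yields $\lambda^{-1}\int_x^\infty e^{\alpha(y-x)}f(y)\,dy\sim\tfrac1{\lambda-\alpha}F(x)$. The remaining double-integral equivalence is obtained by applying the already-proved single-integral asymptotic to the function $F$ in place of $f$: $F$ also has tail integral satisfying \eqref{neweqn:1} with the same $\lambda$ (indeed $\int_{x}^\infty F = \int_x^\infty e^{\alpha(y-x)}f(y)\,dy$-type computation at $\alpha=0$ shows $\int_x^\infty F(y)\,dy\sim\lambda^{-1}F(x)$, which again is regularly varying after $y=\log u$), so the same Karamata argument gives $\int_x^\infty e^{\alpha(y-x)}\int_y^\infty f(z)\,dz\,dy\sim\tfrac{\lambda}{\lambda-\alpha}\int_x^\infty F(y)\,dy\sim\tfrac{1}{\lambda-\alpha}F(x)$, and chaining the three $\sim$'s gives the displayed chain.

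For part \ref{c}: $k$ is the inverse of $F$, and $F(x)=e^{-(\lambda+\epsilon(x))x}$ with $\epsilon(x)\to0$ forces $-\log F(x)=(\lambda+\epsilon(x))x\sim\lambda x$, i.e. $-\log F(x)/x\to\lambda$; substituting $x=k(t)$ and $t=F(k(t))$ gives $-\log t/k(t)\to\lambda$, hence $k(t)\sim-\lambda^{-1}\log t$ as $t\to0+$. One only has to note $F$ is eventually strictly decreasing and continuous so that $k$ is well defined for small $t$, which follows since $f>0$ is locally integrable with finite tail integral (so $F$ is continuous and strictly decreasing wherever $f>0$ on a set of positive measure, which it is near $\infty$ as $F$ is finite and nonzero). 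I expect the only real obstacle to be the little-$o$ estimate in part \ref{a}: unlike the $\lambda>0$ case, Karamata's theorem gives no clean asymptotic for $e^{-\alpha x}\int_1^x e^{\alpha y}f(y)\,dy$ when $\lambda=0$ (this integral can be comparable to, larger than, or incomparable with $F(x)$ depending on how fast $F$ decays), so one must exploit the uniformity in \eqref{neweqn:1} carefully, splitting the range of integration at a threshold $x-M$ (respectively $\delta x$) and letting the threshold grow after $x\to\infty$; the bound on the "head" of the integral, $e^{-\alpha x}\int_1^{x-M}e^{\alpha y}f(y)\,dy\le e^{-\alpha M}F(1)$, together with $e^{-\alpha x}\int_{x-M}^x e^{\alpha y}f(y)\,dy\le \alpha^{-1}\sup_{[x-M,x]}F(y)\le \alpha^{-1}F(x-M)$ and $F(x-M)/F(x)\to 1$ as $x\to\infty$ (since $\lambda=0$), is what makes it work after sending first $x\to\infty$ then $M\to\infty$, using $e^{-\alpha M}\to0$ against the fact that $F(x-M)=o(1)\cdot$ nothing — rather one normalizes by $F(x)$ throughout: $\big(e^{-\alpha x}\int_1^x e^{\alpha y}f(y)\,dy\big)/F(x)\le e^{-\alpha M}F(1)/F(x) + \alpha^{-1}F(x-M)/F(x)$; the second term tends to $\alpha^{-1}$ which is not $o(1)$, so this crude split is insufficient and instead one needs the sharper split at $\delta x$ with $\delta<1$ small, using $F(\delta x)/F(x)\to 1$ still (as $\lambda=0$, $F$ is slowly varying in $\log$-scale) — this delicate point, and getting the head term $e^{-\alpha x+\alpha\delta x}=e^{-\alpha(1-\delta)x}$ to beat $F(x)=e^{-\epsilon(x)x}$, which requires $\epsilon(x)(1-\delta)^{-1}<\alpha$ eventually (true since $\epsilon\to0$), is the technical heart of the argument.
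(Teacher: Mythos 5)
Parts \eqref{b} and \eqref{c} of your proposal are correct and essentially the paper's own route: pass to $g(u)=F(\log u)$ with $F(x)=\int_x^\infty f(y)\,dy$, use the integration-by-parts/Fubini identity, and invoke Karamata's theorem (respectively the representation of slowly/regularly varying functions), and your chaining of the two equivalences in \eqref{b} is sound. The genuine gap is the little-$o$ estimate in part \eqref{a}, which you yourself flag as the technical heart. Your final argument rests on the claim that $F(\delta x)/F(x)\to 1$ because ``$F$ is slowly varying in log-scale''. That is false: condition \eqref{neweqn:1} with $\lambda=0$ only gives $F(x+a)/F(x)\to1$ for each fixed \emph{additive} shift $a$ (equivalently $u\mapsto F(\log u)$ is slowly varying); it gives no control of $F(\delta x)/F(x)$ under multiplicative rescaling. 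For instance $f(y)=\tfrac{1}{2\sqrt y}e^{-\sqrt y}$, $F(x)=e^{-\sqrt x}$, satisfies the hypotheses with $\lambda=0$, yet $F(\delta x)/F(x)=e^{(1-\sqrt\delta)\sqrt x}\to\infty$ for every $\delta\in(0,1)$, so your bound on the ``tail'' piece does not tend to $0$ and the split at $\delta x$ does not close. Your earlier variant has the same defect in another guise: once you discard the boundary term and try to show $\alpha e^{-\alpha x}\int_1^x e^{\alpha y}F(y)\,dy=o(F(x))$, you are attempting to prove something false, since this quantity is asymptotically \emph{equivalent} to $F(x)$. The little-$o$ statement is exactly the cancellation $\alpha\int_1^x e^{\alpha y}F(y)\,dy-e^{\alpha x}F(x)=o\big(e^{\alpha x}F(x)\big)$, i.e.\ the direct half of Karamata's theorem applied to the slowly varying $g(u)=F(\log u)$ (take $\rho=0$, $\sigma=\alpha-1$). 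This is how the paper argues: with $u=e^x$, $\int_1^x e^{\alpha y}f(y)\,dy=e^{\alpha}g(e)-\big(u^{\alpha}g(u)-\alpha\int_e^u z^{\alpha-1}g(z)\,dz\big)$, Karamata gives $u^{\alpha}g(u)\sim\alpha\int_e^u z^{\alpha-1}g(z)\,dz$, and the claim follows because $e^{-\alpha x}=o(F(x))$. If you insist on a hands-on splitting proof, you must retain the boundary term and show $\alpha\int_0^{x-1}e^{-\alpha t}F(x-t)/F(x)\,dt\to1$, which requires a Potter-type uniform bound $F(x-t)/F(x)\le Ce^{\varepsilon t}$, not merely the pointwise limit and certainly not $F(\delta x)/F(x)\to1$.

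A smaller point in the same part: your justification of $\int_1^\infty e^{\alpha y}f(y)\,dy=\infty$ (``the two integrals differ by a boundary term, so since the second diverges so does the first'') is incomplete as stated, because the boundary term $e^{\alpha x}F(x)$ itself diverges and is asymptotically of the same size as $\alpha\int_1^x e^{\alpha y}F(y)\,dy$, so a difference of divergent quantities could a priori stay bounded. It is easily patched: either note that finiteness of $\int_1^\infty e^{\alpha y}f(y)\,dy$ would force $e^{\alpha x}F(x)\le\int_x^\infty e^{\alpha y}f(y)\,dy\to0$, contradicting $e^{\alpha x}F(x)\to\infty$ from the representation $F(x)=e^{-\epsilon(x)x}$, or use the one-sided Fubini identity $\alpha\int_1^\infty e^{\alpha y}F(y)\,dy=\int_1^\infty(e^{\alpha y}-e^{\alpha})f(y)\,dy\le\int_1^\infty e^{\alpha y}f(y)\,dy$, which is the form the paper uses. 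With these repairs your argument would coincide in substance with the paper's proof.
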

\begin{proof}[Proof of Proposition \ref{prop:1}]
Put $g(u):=\int_{\log u}^{\infty}f(z)\,dz$.
It is true that $u^{\lambda}g(u)$ is slowly varying under the condition \eqref{neweqn:1}.
In the following discussion, we take $u=e^{x}$.

For $\alpha\in\mathbb{R}$ and $x>1$, by change of variable and Fubini's theorem, we have
\begin{align*}
\int_{x}^{\infty}(e^{\alpha y}-&\ e^{\alpha x})f(y)\,dy
= \alpha \int_{x}^{\infty} \int_{x}^{y} e^{\alpha z} f(y)\,dzdy\\
=&\ \alpha \int_{x}^{\infty} e^{\alpha z} \int_{z}^{\infty}f(y)\,dydz
=\alpha \int_{u}^{\infty}y^{\alpha-1}g(y)dy.
\end{align*}
Applying Proposition 1.5.1 of \cite{Bingham1987:book},
the last integral converges if $\alpha-\lambda<0$ and diverges if $\alpha-\lambda>0$, which proves those results on the finiteness of integrals $\int_{1}^{\infty}e^{\alpha y}\int_{y}^{\infty}f(z)\,dzdy$ and $\int_{1}^{\infty}e^{\alpha y}f(y)\,dy$ in \eqref{a} and \eqref{b}, respectively.

If $\lambda>0$ and $\alpha-\lambda<0$, applying Karamata's theorem, 
we further have
\[
\alpha \int_{u}^{\infty} y^{\alpha-1}g(y)\,dy\sim \frac{\alpha}{\lambda-\alpha} \big(u^{\alpha}g(u)\big)=\frac{\alpha}{\lambda-\alpha} e^{\alpha x}\int_{x}^{\infty}f(y)\,dy
\]
which proves the last result of \eqref{b}.

If $\lambda=0$ and $\alpha>0$, then $u^{\alpha}g(u)\to\infty$ as $u\to\infty$. By integration by parts, we have
\begin{align*}
\int_{1}^{x}e^{\alpha y}f(y)\,dy
=&\ e^{\alpha}\int_{1}^{\infty}f(y)\,dy-e^{\alpha x}\int_{x}^{\infty}f(y)\,dy
+ \alpha\int_{1}^{x}e^{\alpha y}dy \int_{y}^{\infty}f(z)\,dz\\
=&\ e^{\alpha}g(e)- \Big(u^{\alpha }g(u)- \alpha \int_{e}^{u}z^{\alpha-1}g(z)\,dz\Big).
\end{align*}
Since $u^{\alpha }g(u)\sim \alpha \int_{e}^{u}z^{\alpha-1}g(z)\,dz$ by Karamata's theorem, the last result of \eqref{a} holds.

If $\lambda>0$, for any $\varepsilon>0$, by Proposition 1.5.1 of \cite{Bingham1987:book},
we have $\D u^{\lambda+\varepsilon}g(u)\to\infty$ and $\D u^{\lambda-\varepsilon}g(u)\to0+$ as $u\to\infty$. Since $g(u)$ is continuous and decreasing, for any fixed $M>0$ we have $g(e^{k(x)})=x$ and
\[e^{(\lambda-\varepsilon)k(x)}x\leq M^{-1}
\quad\text{and}\quad
e^{(\lambda+\varepsilon)k(x)} x\geq M\]
for all small enough $x>0$.
Therefore,
\[\frac{-1}{\lambda+\varepsilon}\big(\log x-\log M\big)\leq k(x)\leq \frac{-1}{\lambda-\varepsilon}\big(\log x+\log M\big),\]
 which leads to the result of \eqref{c}.
\end{proof}

Applying Lemma \ref{lem:1} and Proposition \ref{prop:1} above, we first obtain the following asymptotic result on the integral with respect to potential measure $U$ of function $f$ which satisfies \eqref{neweqn:1} with $\lambda=0$.
\begin{lem}\label{lem:2}
Suppose that $\gamma\in(0,\infty)$
and $f\geq0$ is an integrable function on $(0,\infty)$ with
\[\D \int_{x+a}^{\infty}f(y)\,dy\sim \int_{x}^{\infty}f(y)\,dy\quad\text{ for every}\quad a>0.\] Then
\[ \int_{0}^{\infty}f(y)U(x,dy)\sim \gamma^{-1}\int_{x}^{\infty}f(y)\,dy
\quad\text{as $x\to\infty$}.\]
\end{lem}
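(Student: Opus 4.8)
The plan is to use the explicit resolvent density in \eqref{eqn:resolvent}, namely $u(x,y)=e^{-px}W(y)-W(y-x)$, so that $\int_0^\infty f(y)\,U(x,dy)=\int_0^\infty f(y)u(x,y)\,dy$ (finite, since $u(x,\cdot)$ is bounded and $f\in L^1$), and to split this integral over the three regions $y>x+M$, $x\le y\le x+M$ and $0<y<x$ for a large fixed $M$. Write $F(x):=\int_x^\infty f(y)\,dy$; it is finite, positive and decreasing to $0$, and the hypothesis says exactly that $F$ satisfies \eqref{neweqn:1} with $\lambda=0$. Note first that $\gamma=-\psi'(0)\in(0,\infty)$ forces $p=\Phi(0)>0$, so, by \eqref{scale_limit}, $W_p(y)=e^{-py}W(y)\uparrow\Phi'(0)<\infty$ and Lemma \ref{lem:1} is available.

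For the dominant piece I would fix $\varepsilon>0$ and invoke the \emph{uniform} version of Lemma \ref{lem:1}: since $u(x,x+z)=e^{-px}W(x+z)-W(z)\to(1-e^{-px})/\gamma$ as $z\to\infty$ uniformly in $x\ge0$, and $e^{-px}\to0$, there are $M$ and $x_0$ with $|u(x,x+z)-1/\gamma|\le\varepsilon$ whenever $x\ge x_0$ and $z\ge M$. Then, substituting $y=x+z$,
\[
\Big|\int_{x+M}^\infty f(y)u(x,y)\,dy-\tfrac1\gamma F(x+M)\Big|=\Big|\int_M^\infty f(x+z)\big(u(x,x+z)-\tfrac1\gamma\big)\,dz\Big|\le\varepsilon\,F(x+M),
\]
and using $F(x+M)\sim F(x)$ this gives $\limsup_{x\to\infty}\big|F(x)^{-1}\int_{x+M}^\infty f(y)u(x,y)\,dy-1/\gamma\big|\le\varepsilon$.

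Next I would check that the remaining two regions contribute $o(F(x))$, with this $M$ held fixed. On $(0,x)$ one has $u(x,y)=e^{-p(x-y)}W_p(y)\le\Phi'(0)e^{-p(x-y)}$, and on $[x,x+M]$ one has $u(x,y)\le\Phi'(0)e^{pM}$, so
\[
\int_0^{x+M}f(y)u(x,y)\,dy\le\Phi'(0)e^{-px}\int_0^x e^{py}f(y)\,dy+\Phi'(0)e^{pM}\big(F(x)-F(x+M)\big).
\]
The last term is $o(F(x))$ because $F(x+M)\sim F(x)$; for the first, $e^{-px}\int_1^x e^{py}f(y)\,dy=o(F(x))$ is precisely Proposition \ref{prop:1}\eqref{a} with $\alpha=p>0$, while $e^{-px}\int_0^1 e^{py}f(y)\,dy=O(e^{-px})=o(F(x))$ since $F(x+1)\sim F(x)$ forces $e^{px}F(x)\to\infty$ (equivalently, $u\mapsto F(\log u)$ is slowly varying). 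Adding the three contributions yields $\limsup_{x\to\infty}\big|F(x)^{-1}\int_0^\infty f(y)u(x,y)\,dy-1/\gamma\big|\le\varepsilon$, and letting $\varepsilon\downarrow0$ finishes the proof.

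I expect the dominant-term step to be the real obstacle: inside $\int_{x+M}^\infty$ both $x$ and the displacement $z=y-x$ run off to infinity simultaneously, so the pointwise renewal asymptotics do not suffice and one genuinely needs the uniform-in-$x$ convergence supplied by Lemma \ref{lem:1}. The second sensitive point is confirming that the mass of $f$ at or below $x$ is negligible beside $F(x)$; this is where the $\lambda=0$ regularity of $F$ (through Proposition \ref{prop:1}\eqref{a} and the slow variation of $F(\log\cdot)$) is indispensable, since for a general integrable $f$ the quantity $e^{-px}\int_0^x e^{py}f(y)\,dy$ need not be $o(F(x))$.
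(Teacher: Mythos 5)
Your proof is correct and follows essentially the same route as the paper: write the potential via the density $u(x,y)=e^{-px}W(y)-W(y-x)$, use the uniform/double limit of Lemma \ref{lem:1} on the region where both $x$ and the displacement are large, bound the contribution of $(0,x)$ by $\Phi'(0)e^{-px}\int_0^x e^{py}f(y)\,dy$ and kill it with Proposition \ref{prop:1}\eqref{a}, and absorb the bounded intermediate strip using $\int_x^\infty f\sim\int_{x+M}^\infty f$. The only differences are cosmetic (a three-region split with fixed $M$ instead of the paper's split at $y=x$ followed by a cut at $k$ in the shifted variable, and your slightly more careful treatment of $\int_0^1 e^{py}f(y)\,dy$).
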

\begin{proof}[Proof of Lemma \ref{lem:2}]
Notice that
$W(y-x)=0$ for $y<x$, we have from \eqref{eqn:resolvent} that for $x>0$,
\begin{align*}
&\ \int_{0}^{\infty}f(y)u(x,y)\,dy\\
=&\int_{0}^{\infty}\big(e^{-px}W(y)-W(y-x)\big)f(y) dy\\
=&\ e^{-px}\int_{0}^{x}W(y)f(y) dy+
\int_{0}^{\infty}\big(e^{-px}W(x+y)-W(y)\big)f(x+y) dy\\
=&:I_{1}+I_{2}.
\end{align*}

Since $W(y)e^{-py}=W_{p}(y)\uparrow \Phi'(0)<\infty$ as $y\rightarrow\infty$, we have from Proposition \ref{prop:1}\eqref{a} that
\[
I_{1}\leq \Phi'(0) e^{-px}\int_{0}^{x}e^{py}f(y)\,dy = o\big(\int_{x}^{\infty}f(y)\,dy\big).
\]
On the other hand, for every $\varepsilon>0$, applying Lemma \ref{lem:1},
for some $k>0$,
\[
\Big|\big(e^{-px}W(x+y)-W(y)\big)-\gamma^{-1}\Big|\leq \varepsilon \gamma^{-1},
\quad\text{for $x,y>k$}.
\]
Since $e^{-px}W(x+y)-W(y)\leq \Phi'(0)$ by \eqref{eqn:lem1:1}, then for $x>k$
\begin{align*}
 \Big|I_{2}- \gamma^{-1}& \int_{x}^{\infty}f(y)\,dy \Big|\\
\leq& (\Phi'(0)+\gamma^{-1}) \int_{0}^{k}f(x+y)\,dy+
\varepsilon \gamma^{-1} \int_{k}^{\infty}f(x+y)\,dy\\
=&\ (\Phi'(0)+\gamma^{-1}-\varepsilon \gamma^{-1}) \big(\int_{x}^{\infty}f(y)\,dy- \int_{x+k}^{\infty}f(y)\,dy\big)+ \varepsilon \gamma^{-1} \int_{x}^{\infty}f(y)\,dy\\
\sim&\ \varepsilon \gamma^{-1}\cdot \int_{x}^{\infty}f(y)\,dy
\quad\text{as $x\to\infty$,}
\end{align*}
where we used the assumption that the function of tail integral $x\to \int_{\log x}^{\infty}f(y)\,dy$ is slowly varying.
This finishes the proof.
\end{proof}

We are now ready to prove part \eqref{case:thm1:1} of Theorem \ref{thm:1}. 
Denote
\[J(\tau_{x}^{+}):=\int_{\tau_{x}^{+}}^{\infty}\omega(\xi_{t})\,dt=T_{\infty}-T^{+}_{x}
\quad\text{on the set $\{\tau_{0}^{-}=\infty\}$}.\]
We start with investigating the asymptotic behaviors of the first two moments of $\eta(\infty)$ under $\bP_{x}$, and then estimate 
the fist two moments of $J(\tau_{x}^{+})$ under $\bP_{1}$  using the Markov property of $\xi$.
Recall that $\varphi$ is the tail integral defined in \eqref{def_phi} and $p,\gamma\in(0,\infty)$ in \eqref{eqn:defn:c}.

\begin{proof}[Proof of Theorem \ref{thm:1}\,\eqref{case:thm1:1}]
In this case, $\omega$ has a finite tail integral and satisfies condition \eqref{neweqn:1} with $\lambda=0$. 
In the following moment argument we further assume that
\[\D \varphi(0)=\frac{1}{\gamma}\int_{0}^{\infty}\omega(y)\,dy<\infty,\]
 under which we have $m_{2}(x)<\infty$ by Theorem \ref{thm:4}.
In case the above assumption does not hold, we can first prove the convergence result under
$\bq_{x}\big( \cdot\big|T_{\infty}^{+}<T_{c}^{-}=\infty\big)=\bP_{x}\big(\cdot\big|\tau_{c}^{-}=\infty\big)$ for $c>0$, and then let $c\rightarrow 0+$ to obtain the desired result.

Recall the moments $m_{1}$ and $m_{2}$ in Proposition \ref{prop:2},
\begin{align}
m_{1}(x)=&\ \int_{0}^{\infty}
u(x,y)\omega(y)(1-e^{-py})\,dy,
\label{eqn:prop2:1}\\
m_{2}(x)=&\ 2\int_{0}^{\infty}
u(x,y)\omega(y)m_{1}(y)\,dy.
\label{eqn:prop2:2}
\end{align}
We first claim that, as $x\to\infty$,
\begin{equation}\label{eqn:casea:1}
m_{1}(x)\sim \varphi(x),\quad
m_{2}(x)\sim \varphi^{2}(x),
\end{equation}
and in addition,
\begin{equation}\label{eqn:casea:2}
\bE_{1}\big(h(\xi(\tau_{x}^{+})); \tau_{x}^{+}<\tau_{0}^{-}\big)\sim
\bP_{1}(\tau_{0}^{-}=\infty)g(x)
\end{equation}
where $h\sim g$ and $g$ is a decreasing function $g$ such that $x\to g(\log x)$ varies slowly at $\infty$.

Given \eqref{eqn:casea:1} and \eqref{eqn:casea:2},
since $J(\tau_{x}^{+})=\eta(\infty)\circ\theta_{\tau_{x}^{+}}$,
we have for $x>1$,
\begin{align*}
\bE_{1}^{\uparrow}\big(J(\tau_{x}^{+})\big)
=&\ \frac{1}{\bP_{1}(\tau_{0}^{-}=\infty)}
\bE_{1}\big(m_{1}(\xi(\tau_{x}^{+})); \tau_{x}^{+}<\tau_{0}^{-}\big)
\sim \varphi(x),\\
\bE_{1}^{\uparrow}\big(J^{2}(\tau_{x}^{+})\big)
=&\ \frac{1}{\bP_{1}(\tau_{0}^{-}=\infty)}
\bE_{1}\big(m_{2}(\xi(\tau_{x}^{+})); \tau_{x}^{+}<\tau_{0}^{-}\big)
\sim \varphi^{2}(x),
\end{align*}
which implies that
\[\D \bE_{1}^{\uparrow}\Big(\big(\frac{J(\tau_{x}^{+}\big)}{\varphi(x)}-1\big)^{2}\Big)
\underset{x\to\infty}{\longrightarrow} 0\]
and the desired weak convergence follows.

To prove \eqref{eqn:casea:1},
we apply Lemma \ref{lem:2} to the function $f_{1}(y)=\omega(y)(1-e^{-py})$.
It is not hard to see that
\[\D\int_{x}^{\infty}f_{1}(y)\,dy\sim \int_{x}^{\infty}\omega(y)\,dy
\sim \int_{x+a}^{\infty}\omega(y)dy \sim \int_{x+a}^{\infty}f_{1}(y)dy.\]
Thus, 
$f_{1}$ fulfills the condition in Lemma \ref{lem:2}.
It follows from \eqref{eqn:prop2:1} that
\[m_{1}(x)=\int_{0}^{\infty}u(x,y) f_{1}(y)\,dy\sim \gamma^{-1}\int_{x}^{\infty}f_{1}(y)\,dy
\sim \varphi(x)\quad\text{as $x\to\infty$}.\]
Then, we take $f_{2}(y)=\omega(y)m_{1}(y)$.
 From the result above, for any $\varepsilon\in(0,1)$ let $k_{1}>0$ satisfy
\[(1-\varepsilon)\varphi(x)\leq
m_{1}(x)\leq
(1+\varepsilon)\varphi(x)
\quad\text{for $x>k_{1}$}.\]
It follows that for $x>k_{1}$,
\[\begin{gathered}
\int_{x}^{\infty}f_{2}(y)\,dy
\leq (1+\varepsilon) \int_{y>x}\omega(y)\varphi(y)\,dy
=\frac{1+\varepsilon}{2} \gamma\varphi^{2}(x),\\
\int_{x}^{\infty}f_{2}(y)\,dy
\geq (1-\varepsilon) \int_{y>x}\omega(y)\varphi(y)\,dy
=\frac{1-\varepsilon}{2} \gamma\varphi^{2}(x),
\end{gathered}\]
which gives that
\[\D\int_{x}^{\infty}f_{2}(y)\,dy\sim \frac{1}{2}\gamma\varphi^{2}(x),\]
 and $f_2$ satisfies the condition of Lemma \ref{lem:2}.
Applying \eqref{eqn:prop2:2} and Lemma \ref{lem:2} we have
\[m_{2}(x)= 2\int_{0}^{\infty}f_{2}(y)u(x,y)\,dy\sim 2\gamma^{-1}\int_{x}^{\infty}f_{2}(y)\,dy
\sim \varphi^{2}(x).\]

To prove \eqref{eqn:casea:2}, let $k_{2}>0$ satisfy
\[
\rho([0,k_{2}])\geq 1-\varepsilon\quad\text{and}\quad
(1-\varepsilon)g(x)\leq h(x)\leq (1+\varepsilon)g(x)
\quad\text{for $x>k_{2}$},
\]
where $\rho$ is the stationary overshoot distribution in \eqref{eqn:defn:rho}.
Then for $x>k_{2}$,
\begin{align*}
(1+\varepsilon) \geq&\ \frac{\bE\big(h(\xi_{\tau_{x}^{+}})\big)}{g(x)}
\geq (1-\varepsilon)
\frac{\bE\big(g(\xi_{\tau_{x}^{+}})\big)}{g(x)}\\
\geq&\ (1-\varepsilon)
\frac{\bE\big(g(\xi_{\tau_{x}^{+}}); \xi_{\tau_{x}^{+}}<x+k_{2})\big)}{g(x)}
\geq (1-\varepsilon)^{2} \frac{g(x+k_{2})}{g(x)}
\end{align*}
where the monotonicity of $g$ is applied to the first and the last inequality. Thus,
\[
\bE\big(h(\xi_{\tau_{x}^{+}})\big)\sim g(x)\quad \text{as $x\to\infty$}.
\]
Lastly, applying the strong Markov property for $\xi$ we further have
\begin{align*}
&\ \bE_{1}\big(h(\xi_{\tau_{x}^{+}}); \tau_{x}^{+}<\tau_{0}^{-}\big)\\
=&\ \bE_{1}\big(h(\xi_{\tau_{x}^{+}})\big)- \bP_{1}\big(\tau_{0}^{-}<\tau_{x}^{+}\big)\cdot \bE\big(h(\xi_{\tau_{x}^{+}})\big)\\
=&\ \bE\big(h(\xi_{\tau_{x-1}^{+}}+1)\big)- \bE\big(h(\xi_{\tau_{x}^{+}})\big)+
\bP_{1}(\tau_{x}^{+}<\tau_{0}^{-})\cdot \bE\big(h(\xi_{\tau_{x}^{+}})\big)\\
\sim&\ \bP_{1}(\tau_{x}^{+}<\tau_{0}^{-})\cdot g(x) \sim \bP_{1}(\tau_{0}^{-}=\infty) g(x) \quad\text{as}\quad x\rightarrow\infty.
\end{align*}
This finishes the proof.
\end{proof}
\begin{rmk}
In the proof of statement \eqref{case:thm1:1}, we have
\[
\bE^{\uparrow}_{1}\big(J(\tau_{x}^{+})\big)
\sim \bE^{\uparrow}_{x}\big(\eta(\infty)\big)
\sim \varphi(x)
\quad\text{as $x\to\infty$}.
\]
However, with the presence of positive jumps, $m_{1}(x)$ and $\bE^{\uparrow}_{x}\big(\eta(\infty)\big)$ may fail to be monotone in $x$ in general.
\end{rmk}

For the proof of statement \eqref{case:thm1:2} of Theorem \ref{thm:1}, we make use of the local time for the process $\xi$, see
c.f. Chapter V of \cite{Bertoin96:book} for more detailed discussion.
Given a SPLP $\xi$, its local time is well-defined
and defined as the density of occupation measure by, $\bP$-a.s.,
\[L(y,t):=\lim_{\varepsilon\to0+}\frac{1}{2\varepsilon}\int_{0}^{t}\mathbf{1}(|\xi_{s}-y|<\varepsilon)\,ds,\quad\text{for $y\in\mathbb{R}, t>0$}.\]
and the following \textit{occupation density formula} holds for all measurable bounded function $f\geq0$,
\[
\int_{0}^{t}f(\xi_{s})\,ds=\int_{\mathbb{R}}f(y) L(y,t)\,dy
\quad\text{$\bP$-a.s}.
\]
We also need the following lemmas on the regularly varying functions and the local time.

\begin{lem}\label{lem:3}
Let $\omega$ be the function in statement \eqref{case:thm1:2} of Theorem \ref{thm:1}.
Let $f\geq0$ be a measurable function locally integrable such that the set $\{x\in\mathbb{R}, f(x)>0\}$ is bounded from below,
 \[\D \int_{\mathbb{R}} e^{-\lambda y}f(y)\,dy<\infty\quad
\text{and}\quad \D \int_{\mathbb{R}} e^{-2\alpha y}f^{2}(y)\,dy<\infty\quad
\text{for some}\quad 2\alpha\in(0,\lambda).\] Then
\begin{equation}\label{eqn:lem3:1}
\frac{1}{\varphi(x)}\int_{\mathbb{R}}\omega(x+y)f(y)\,dy
\underset{x\to\infty}{\longrightarrow}
\lambda\gamma \int_{\mathbb{R}}e^{-\lambda y}f(y)\,dy.
\end{equation}
\end{lem}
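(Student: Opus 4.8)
The plan is to prove \eqref{eqn:lem3:1} by combining the occupation density formula with the asymptotics for tail integrals from Proposition \ref{prop:1}, together with the renewal-type result of Lemma \ref{lem:1}. The starting observation is that the condition \asptwo\ with $\lambda>0$ places $\varphi$ (and hence $\omega$, up to tail integrals) in the setting of Proposition \ref{prop:1}\eqref{b}: for any $\alpha<\lambda$ the integrals $\int_1^\infty e^{\alpha y}\omega(y)\,dy$ and $\int_1^\infty e^{\alpha y}\varphi(y)\,dy$ are finite, and $\int_x^\infty e^{\alpha(y-x)}\omega(y)\,dy\sim(\lambda-\alpha)\gamma\varphi(x)$ as $x\to\infty$. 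In particular, taking $\alpha=0$ gives $\int_x^\infty\omega(y)\,dy=\gamma\varphi(x)$ by definition, and the point is that the whole exponential family of tail integrals is asymptotically proportional to $\varphi(x)$.

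First I would reduce to the case where $f=e^{-\lambda y}$ on a half-line, or more precisely handle the integral $\int_{\mathbb R}\omega(x+y)f(y)\,dy=\int_{\mathbb R}\omega(z)f(z-x)\,dz$. Writing $f(y)=e^{-\lambda y}\tilde f(y)$ we have $\int \omega(z)e^{-\lambda(z-x)}\tilde f(z-x)\,dz = e^{\lambda x}\int\omega(z)e^{-\lambda z}\tilde f(z-x)\,dz$, and the heuristic is that as $x\to\infty$ the mass of $\omega(z)e^{-\lambda z}$ near $z=x$ is what matters because $\tilde f(z-x)$ slides to the right; a dominated-convergence argument on dyadic-type blocks, using that $\int e^{-\lambda y}f(y)\,dy<\infty$ controls the contribution from $z-x$ large and the $L^2$ bound $\int e^{-2\alpha y}f^2(y)\,dy<\infty$ (with $2\alpha<\lambda$) handles the overlap region, should yield $\int_{\mathbb R}\omega(x+y)f(y)\,dy\sim c(x)\int_{\mathbb R}e^{-\lambda y}f(y)\,dy$ where $c(x)$ is the normalizing quantity $\int_x^\infty e^{-\lambda(y-x)}\omega(y)\,dy$ appearing when $f=\mathbf 1_{[0,\infty)}$, up to boundary corrections. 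Then by Proposition \ref{prop:1}\eqref{b} with $\alpha=0$ replaced by the appropriate shift, $\int_x^\infty e^{-\lambda(y-x)}\omega(y)\,dy\to \lambda\gamma\varphi(x)\cdot\lambda^{-1}\cdot\lambda=\lambda\gamma\varphi(x)$; more carefully, $\int_x^\infty e^{\alpha(y-x)}\omega(y)\,dy\sim\frac{(\lambda-\alpha)\gamma}{\lambda-\alpha}\varphi(x)$ — I would recompute the exact constant directly from part \eqref{b} to confirm it is $\lambda\gamma\varphi(x)$ when $\alpha\to\lambda$ is replaced by the genuine value, since the normalization is the crux of getting the factor $\lambda\gamma$ right.

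A cleaner route, which I would actually prefer, is to avoid the local-time machinery in the statement of the lemma name notwithstanding, and argue purely by the substitution $u=e^x$ so that $u^\lambda\varphi(\log u)$ is slowly varying, then invoke the Karamata-type computation already carried out in the proof of Proposition \ref{prop:1}. Concretely: split $\int_{\mathbb R}\omega(x+y)f(y)\,dy$ at $y=\varepsilon x$ and $y=-\varepsilon x$; on $y\in(\varepsilon x,\infty)$ use $\int_{\varepsilon x}^\infty\omega(x+y)f(y)\,dy\le \|f e^{-\lambda\cdot}\|_{L^1}\cdot\sup_{y>\varepsilon x}\omega(x+y)e^{\lambda y}$, and since $\omega(x+y)e^{\lambda y}=e^{-\lambda x}\cdot\omega(x+y)e^{\lambda(x+y)}$ with the tail integral of $\omega(z)e^{\lambda z}$ being slowly-varying-controlled, this is $o(\varphi(x))$; on $y<-\varepsilon x$ the set where $f>0$ being bounded below forces $x+y$ bounded, a negligible contribution once $x$ is large; and on the central block $|y|\le\varepsilon x$ use Lemma \ref{lem:1}-style uniformity of $\omega(x+y)/(\text{local average})$ together with the slow variation of $u\mapsto u^\lambda\varphi(\log u)$ to replace $\omega(x+y)$ by its ``expected'' value and integrate against $f$, producing $\lambda\gamma\varphi(x)\int e^{-\lambda y}f(y)\,dy\,(1+o(1))$.

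The main obstacle I anticipate is the central-block estimate: $\omega$ itself need not be monotone or regularly varying — only its tail integral $\varphi$ is controlled — so one cannot directly say $\omega(x+y)\sim\omega(x)e^{-\lambda y}$ pointwise. The honest fix is to integrate $f$ against $\omega(x+\cdot)$ by parts, transferring the regularity onto the tail integral $\Omega(z):=\int_z^\infty\omega$, for which \asptwo\ and Proposition \ref{prop:1} give $\Omega(x+y)\sim\gamma\varphi(x)e^{-\lambda y}$ uniformly for $y$ in compacts (and with the $L^1$/$L^2$ hypotheses on $f$ providing the domination needed to push this uniformity across the growing window). This is exactly the kind of ``Stieltjes-integral'' manipulation flagged in the paragraph before the lemma, and executing the integration by parts carefully — keeping track of the boundary term at the left edge of $\{f>0\}$ and showing the remaining integral against $dF$ (where $F$ is an antiderivative of $f$) converges by dominated convergence — is where the real work lies; the $L^2$ condition with $2\alpha<\lambda$ is what guarantees $\int e^{-\alpha y}|f(y)|\,dy<\infty$ by Cauchy–Schwarz, hence the needed integrability of $F$ against the exponentially decaying kernel.
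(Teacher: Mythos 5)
Your plan has a genuine gap: it never uses the hypothesis that $\omega$ is ``the function in statement \eqref{case:thm1:2} of Theorem \ref{thm:1}'', i.e.\ the standing condition $\limsup_{x\to\infty}\varphi(x)^{-2}\int_x^\infty\omega^2(y)\,dy<\infty$, and without that condition the argument cannot succeed. Since \asptwo\ only governs the tail integral of $\omega$, pointwise bounds such as $\sup_{y>\varepsilon x}\omega(x+y)e^{\lambda y}$ in your outer-region estimate are simply unavailable: $\omega$ may carry arbitrarily tall, thin spikes that leave $\varphi$ (hence \asptwo) untouched, and such spikes, aligned with a suitable $f$ satisfying both of your integrability hypotheses, destroy the limit \eqref{eqn:lem3:1}. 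So any correct proof must exploit the extra $L^2$ condition on $\omega$; this is precisely what the paper does: by Fubini and Karamata's theorem applied to the regularly varying function $s\mapsto\varphi^2(\log s)$, that condition yields $\int_0^\infty e^{2\alpha y}\omega^2(x+y)\,dy=O(\varphi^2(x))$, and Cauchy--Schwarz then pairs this with $\int e^{-2\alpha y}f^2(y)\,dy<\infty$ to control $\varphi(x)^{-1}\int\omega(x+y)g(y)\,dy$ uniformly in $x$ whenever $\int e^{-2\alpha y}g^2(y)\,dy$ is small. Your only stated use of the $L^2$ hypothesis on $f$ (``it guarantees $\int e^{-\alpha y}f(y)\,dy<\infty$ by Cauchy--Schwarz'') is both incorrect as a deduction (the complementary factor diverges) and misses this actual role.

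The remaining steps are also shaky, and the paper's route is considerably simpler than what you envisage. Your normalizing constant is wrong: by the very Proposition \ref{prop:1}\eqref{b} you invoke (with $\alpha=-\lambda$), $\int_x^\infty e^{-\lambda(y-x)}\omega(y)\,dy\sim\tfrac{1}{2}\gamma\varphi(x)$, not $\lambda\gamma\varphi(x)$. The proposed integration-by-parts repair is not available either: $f$ is merely measurable and nonnegative, so ``integrating against $dF$'' has no meaning, and even in a Stieltjes formulation the error terms still need the missing $L^2$ control on $\omega$. The paper instead verifies \eqref{eqn:lem3:1} directly for half-line indicators $f=\mathbf{1}(\cdot>c)$, where the left-hand side equals $\gamma\varphi(x+c)/\varphi(x)\to\gamma e^{-\lambda c}=\lambda\gamma\int_c^\infty e^{-\lambda y}\,dy$ by \asptwo; it then extends to bounded measurable $f$ by monotone approximation with such simple functions, and finally passes to general $f$ by truncating $f_n=f\wedge n$ and using the uniform-in-$x$ Cauchy--Schwarz bound above to make the truncation error vanish as $n\to\infty$. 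If you want to salvage your outline, replace the splitting and integration-by-parts machinery by this indicator-plus-truncation scheme built on the $\omega$-$L^2$ hypothesis.
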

\begin{lem}\label{lem:4}
Suppose that $p>0$. For any $2\alpha\in(0,p)$, we have
\[
\bE\big(\int_{\mathbb{R}} e^{-2\alpha y}L^{2}(y,\infty)\,dy\big)
<\infty.
\]
\end{lem}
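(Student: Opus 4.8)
The plan is to reduce the claim to a pointwise estimate on $\bE\big((L(y,\infty))^{2}\big)$ in terms of the $0$-potential density of $\xi$, and then to integrate that estimate against $e^{-2\alpha y}\,dy$; the admissible range $2\alpha\in(0,p)$ will turn out to be exactly the range for which the resulting elementary integral converges. The first step is to write down the $0$-potential measure $U(x,dy)=\bE_{x}\big(\int_{0}^{\infty}\mathbf{1}(\xi_{t}\in dy)\,dt\big)$ of $\xi$ on the whole line, which is Radon because $p>0$ forces transience. Starting from the potential of $\xi$ killed on leaving $[0,\infty)$ in \eqref{eqn:resolvent}, spatial homogeneity gives the potential density of $\xi$ killed on leaving $[-n,\infty)$ as $e^{-p(x+n)}W(y+n)-W(y-x)$, and by \eqref{scale_limit} this increases, as $n\to\infty$, to
\[
u(x,y)=\Phi'(0)\,e^{-p(x-y)}-W(y-x),\qquad x,y\in\mathbb{R}.
\]
Monotone convergence then yields $U(x,dy)=u(x,y)\,dy$, and matching with the occupation density formula gives $\bE_{x}\big(L(y,\infty)\big)=u(x,y)$. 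Two features of $u$ will be used: $u(y,y)=\Phi'(0)-W(0)$ is a finite nonnegative constant, and, since $W\ge0$, $u(0,y)\le\Phi'(0)e^{py}$, while for $y\ge0$ the remark following the proof of Lemma \ref{lem:1} gives $u(0,y)=\Phi'(0)\,\bP(\tau^{\{y\}}<\infty)\le\Phi'(0)$; hence $u(0,y)\le\Phi'(0)\,(e^{py}\wedge1)$ for all $y$.

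Next I would establish the second moment identity
\[
\bE\big((L(y,\infty))^{2}\big)=2\,u(0,y)\,u(y,y)\ \le\ 2\,\Phi'(0)^{2}\,(e^{py}\wedge1).
\]
For this, recall that under $\bP_{y}$ the inverse local time at $y$ is a subordinator killed at an independent exponential rate, so $L(y,\infty)$ is exponentially distributed with mean $\bE_{y}\big(L(y,\infty)\big)=u(y,y)$ (c.f.\ Chapter V of \cite{Bertoin96:book}); consequently $\bE_{y}\big((L(y,\infty))^{2}\big)=2u(y,y)^{2}$. Applying the strong Markov property at the hitting time $\tau^{\{y\}}$ of $y$, together with the relation $u(0,y)=\bP(\tau^{\{y\}}<\infty)\,u(y,y)$ (obtained by taking expectations in $L(y,\infty)=(L(y,\infty))\circ\theta_{\tau^{\{y\}}}\mathbf{1}(\tau^{\{y\}}<\infty)$ and using $\xi_{\tau^{\{y\}}}=y$), gives the displayed identity; if $u(y,y)=0$ then $L(y,\infty)=0$ a.s.\ and everything is trivial. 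Alternatively, one may avoid the fine structure of $L$ by writing $(L(y,\infty))^{2}=2\int_{0}^{\infty}\big(L(y,\infty)-L(y,t)\big)\,d_{t}L(y,t)$ and applying the Markov property on the support of $d_{t}L(y,\cdot)$, where $\xi_{t}=y$, to the same effect.

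Finally, by Tonelli's theorem and the bound above,
\[
\bE\Big(\int_{\mathbb{R}}e^{-2\alpha y}L^{2}(y,\infty)\,dy\Big)=\int_{\mathbb{R}}e^{-2\alpha y}\,\bE\big(L^{2}(y,\infty)\big)\,dy\ \le\ 2\,\Phi'(0)^{2}\int_{\mathbb{R}}e^{-2\alpha y}\,(e^{py}\wedge1)\,dy,
\]
and the last integral equals $\int_{-\infty}^{0}e^{(p-2\alpha)y}\,dy+\int_{0}^{\infty}e^{-2\alpha y}\,dy=\frac{1}{p-2\alpha}+\frac{1}{2\alpha}<\infty$ precisely because $0<2\alpha<p$. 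The only genuinely delicate point is the second moment identity: it rests on Markov local time theory and on the fact that the occupation-density normalization of $L$ used here is the one for which $\bE_{x}\big(L(y,\infty)\big)=u(x,y)$; keeping the process transient ($p>0$) throughout the first step is also essential. The remaining steps are routine computations.
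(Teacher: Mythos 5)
Your overall strategy (reduce to a pointwise bound on $\bE\big(L^{2}(y,\infty)\big)$, then integrate against $e^{-2\alpha y}\,dy$) is a genuinely different route from the paper, which instead follows Theorem V.1 of \cite{Bertoin96:book}: it computes $\bE\big(|\mathcal{F}g(u)|^{2}\big)$ for $g(y)=e^{-\alpha y}L(y,\infty)$ via the Markov property and a change of measure, integrates in $u$ using Theorem II.16 of \cite{Bertoin96:book}, and concludes by Plancherel, never needing the law of $L(y,\infty)$ at a fixed $y$. The gap in your argument sits exactly at the step you call delicate. The claim that under $\bP_{y}$ the inverse local time at $y$ is a killed subordinator, so that $L(y,\infty)$ is exponential with mean $u(y,y)$ and $\bE_{y}\big(L^{2}(y,\infty)\big)=2u(y,y)^{2}$, is valid only when $y$ is regular for $\{y\}$, i.e.\ when $\xi$ has unbounded variation. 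Lemma \ref{lem:4} only assumes $p>0$, which admits bounded variation spectrally positive processes (negative drift $-d$ plus a subordinator, e.g.\ the compound Poisson risk-type model). For these, points are non-polar (downward creeping) but irregular for themselves: the visiting set $\{t:\xi_{t}=y\}$ is discrete, and the occupation-density local time used in the paper equals $d^{-1}$ times the number of continuous downcrossings of $y$ (plus a boundary half-term under $\bP_{y}$), a lattice-valued variable that is not exponential; so the identity $\bE\big(L^{2}(y,\infty)\big)=2u(0,y)u(y,y)$ is unjustified there. Your alternative derivation has the same defect: when $L(y,\cdot)$ is purely discontinuous the correct formula is $L^{2}(y,\infty)=2\int_{0}^{\infty}\big(L(y,\infty)-L(y,t)\big)\,d_{t}L(y,t)+\sum_{t}\big(\Delta_{t}L(y,\cdot)\big)^{2}$, and the jump term you drop is nonzero precisely in the problematic case.

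The defect is repairable, which is worth noting: in the bounded variation case each downcrossing of $y$ is followed by an up-jump over $y$ with strictly positive overshoot (no upward creeping), after which the process never returns below $y$ with probability $1-e^{-p\,(\text{overshoot})}$; hence the number of downcrossings is geometric with a parameter $1-\bE\big(e^{-p\,\xi(\tau_{0}^{+})}\big)<1$ not depending on $y$, so $\bE_{y}\big(L^{2}(y,\infty)\big)$ is bounded uniformly in $y$, and your strong Markov step at $\tau^{\{y\}}$ together with $u(0,y)\le\Phi'(0)\,(e^{py}\wedge 1)$ (your computation of the whole-line potential density and its bound via Lemma \ref{lem:1} and its remark is correct) then yields the lemma. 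Alternatively, keep the jump correction in the integration-by-parts identity; its expectation is controlled by the first moment $u(0,y)$. As written, however, the key second-moment identity does not cover all processes allowed by the statement, whereas the paper's Fourier--Plancherel argument does.
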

\begin{rmk}
 Lemma \ref{lem:3} appears similar to the
Abelian theorem, c.f. Theorem 4.1.3 of \cite{Bingham1987:book}
where $\omega(\log\cdot)$ is assumed to be regularly varying,
and also similar to Theorem 1.7.5 in \cite{Bingham1987:book}, where conditions related to \textit{slowly decreasing} is imposed. The condition here can be replaced by other, possibly weaker, conditions. For example, if $f$ has bounded variation and is bounded, right-continuous, and $\{x\in\mathbb{R}, f(x)>0\}$ is bounded from below, an application of the uniform converge theorem could give the same result.
\end{rmk}
\begin{proof}[Proof of Lemma \ref{lem:3}]
Since the set $\{x\in\mathbb{R}, f(x)>0\}$ is bounded from below,
it is sufficient to prove \eqref{eqn:lem3:1} for $f$ vanishing on $(-\infty,0)$,
and we only focus on integrals on $(0,\infty)$.

By the assumption in statement \eqref{case:thm1:2}, for some $K,M>0$, we have
\[
\int_{x}^{\infty}\omega^{2}(y)\,dy\leq M\cdot \varphi^{2}(x)
\quad \text{for all $x>K$}.
\]
Applying Fubini's theorem, for $x>K$ we have
\begin{align*}
&\ \int_{0}^{\infty} \big(e^{2\alpha y}- 1\big) \omega^{2}(x+y)\,dy
=\int_{0}^{\infty}\omega^{2}(x+y)\,dy \int_{0}^{y} 2\alpha e^{2\alpha t}\,dt\\
=&\ 2\alpha \int_{0}^{\infty} e^{2\alpha t}dt \int_{x+t}^{\infty} \omega^{2}(y)\,dy
\leq 2 \alpha M \int_{0}^{\infty} e^{2\alpha t} \varphi^{2}(x+t)\,dt\\
=&\ 2\alpha M \int_{x}^{\infty} e^{2\alpha(t-x)}\varphi^{2}(t)\,dt
=2\alpha M \Big(u^{-2\alpha }\int_{u}^{\infty} s^{2\alpha-1}\varphi^{2}(\log s)ds\Big)\Big|_{u=e^{x}}\\
\sim&\ \frac{\alpha\cdot M}{\lambda-\alpha} \varphi^{2}(\log u)\Big|_{u=e^{x}}=
\frac{\alpha\cdot M}{\lambda-\alpha}\cdot \varphi^{2}(x) \quad\text{as $x\to\infty$},
\end{align*}
by applying Karamata's theorem to the last line since 
$s\to \varphi^{2}(\log s)$ is regularly varying with index $-2\lambda$.
Thus, under the assumption for Theorem \ref{thm:1} \eqref{case:thm1:2},
\[\limsup_{x\rightarrow \infty}\int_{0}^{\infty} e^{2\alpha y}\frac{\omega^{2}(x+y)}{\varphi^{2}(x)}\,dy<\infty. \]
The Cauchy-Schwarz inequality then yields
\begin{equation}\label{holder}
\begin{split}
\big(\int_{0}^{\infty}\frac{\omega(x+y)}{\varphi(x)}f(y)\,dy\big)^{2}
&\leq
\Big(\int_{0}^{\infty} e^{-2\alpha y}f^{2}(y)\,dy\Big)
\cdot
\Big(\int_{0}^{\infty} e^{2\alpha y}\frac{\omega^{2}(x+y)}{\varphi^{2}(x)}\,dy\Big),
\end{split}
\end{equation}
where the second term on the right hand side is dominated by some constant.

Firstly, the limit \eqref{eqn:lem3:1} holds for simple functions $f(x)=\mathbf{1}(x>c), \forall c\geq0$ as well as their linear combinations.
Moreover, it holds for any bounded measurable function $f$ on $(0,\infty)$ which can be uniformly and non-decreasingly approximated by simple functions $f_{n}$ satisfying \eqref{eqn:lem3:1}.
Therefore, it holds for all nonnegative bounded Borel functions on $(0,\infty)$,
by applying the functional monotone converge theorem, c.f. Theorem 2.12.9. \cite{Bogachev:book}.
Finally, for any function $f$ satisfying the assumption of Lemma \ref{lem:3}, taking $f_{n}=f\wedge n$ 
and applying \eqref{holder} gives 
\[\lim_{n\to\infty}\limsup_{x\to\infty}\Big(\int_{0}^{\infty}\frac{\omega(x+y)}{\varphi(x)}\big(f(y)-f_{n}(y)\big)dy\Big)^{2}
=0.\]
It follows that 
\begin{align*}
 \lim_{x\to\infty}\int_{0}^{\infty}&\frac{\omega(x+y)}{\varphi(x)}f(y)dy
=\lim_{n\to\infty}\lim_{x\to\infty}\int_{0}^{\infty}\frac{\omega(x+y)}{\varphi(x)}f_{n}(y)dy\\
=&\lim_{n\to\infty}\lambda\gamma\int_{0}^{\infty}e^{-\lambda y}f_{n}(y)dy
=\lambda\gamma \int_{0}^{\infty} e^{-\lambda y}f(y)dy.
\end{align*}
This finishes the proof.
\end{proof}

Lemma \ref{lem:4} is proved following the argument used in Theorem V.1 of \cite{Bertoin96:book}, where Plancherel's theorem is applied.
\begin{proof}[Proof of Lemma \ref{lem:4}]
Let $2\alpha\in(0,p)$ and $g(y)=e^{-\alpha y}L(y,\infty)$ for $y\in\mathbb{R}$.
Then $\psi(\alpha),\psi(2\alpha)<0$ by definition.
Applying Fubini's theorem and the occupation density formula, we have
\[\bE\big(\int_{0}^{\infty} e^{-2\alpha \xi_{t}}\,dt\big)
= \int_{0}^{\infty}\bE\big(e^{-2\alpha \xi_{t}}\big)\,dt=\frac{-1}{\psi(2\alpha)}\]
and
\[\bE\big(\int_{\mathbb{R}} e^{-\alpha y} L(y,\infty)\,dy\big)
=\bE\Big(\int_{0}^{\infty}e^{-\alpha \xi_{t}}\,dt\Big)=\frac{-1}{\psi(\alpha)}.\]
Thus, $e^{-\alpha\xi_{t}}$, $e^{-2\alpha\xi_{t}}$ and $g(y)$ are all integrable.
The Fourier transform of $g$ gives for every $u\in\mathbb{R}$,
\[\mathcal{F}g(u)=\int_{\mathbb{R}} e^{iuy}e^{-\alpha y} L(y,\infty)\,dy=\int_{0}^{\infty} e^{(iu-\alpha)\xi_{t}}\,dt,\quad \bP\text{-a.s.}\]
 In addition, we have
\begin{align*}
 \bE\big(|\mathcal{F}g(u)|^{2}\big)=&\ \bE\big(\mathcal{F}g(u)\mathcal{F}g(-u)\big)
= \bE\Big(\int_{0}^{\infty}\int_{0}^{\infty} e^{(iu-\alpha)\xi_{t}+(-iu-\alpha)\xi_{s}}\,dtds\Big)\\
=&\ \bE\Big(\int_{0}^{\infty}\int_{0}^{\infty} e^{(iu-\alpha)(\xi_{t}-\xi_{s})-2\alpha\xi_{s}}\,dtds\Big).
\end{align*}
Given the integrability of $e^{-\alpha\xi_{t}}$ and $e^{-2\alpha \xi_{t}}$, we can re-express the last term as
\[2\Re\Big(\int_{0}^{\infty}\,ds\int_{s}^{\infty} \,dt\bE\big(e^{(iu-\alpha)(\xi_{t}-\xi_{s})}\big) \bE\big(e^{-2\alpha\xi_{s}}\big)\Big).\]
Under the new measure $\bP^{(\alpha)}$ the above quantity equals to
\[2\Re\Big(\int_{0}^{\infty}e^{\psi(\alpha)t} \bE\big(e^{iu\xi_{t}} e^{-\alpha\xi_{t}-\psi(\alpha)t} \big) \,dt
\int_{0}^{\infty} \bE\big(e^{-2\alpha\xi_{s}}\big)\,ds\Big)
= \frac{-2}{\psi(2\alpha)}\Re\Big(\frac{1}{\Psi_{\alpha}(u)-\psi(\alpha)}\Big),\]
where
$$\D\Psi_{\alpha}(s)=t^{-1}\log\bE^{(\alpha)}\big(e^{is\xi_{t}}\big)=-\psi_{\alpha}(-i s)$$
 is the characteristic exponent of $\xi$ under $\bP^{(\alpha)}$.
Noticing that $-\psi(\alpha)>0$, we have
\[\int_{\mathbb{R}} \bE\big(|\mathcal{F}g(u)|^{2}\big)\,du=\frac{-2}{\psi(2\alpha)}
\int_{\mathbb{R}} \Re\Big(\frac{1}{\Psi_{\alpha}(u)-\psi(\alpha)}\Big)\,du<\infty,\]
where Theorem II.16 in \cite{Bertoin96:book} is applied. The proof is finished by applying Plancherel's theorem.
\end{proof}

Now, we are ready to prove the result of part \eqref{case:thm1:2}.
\begin{proof}[Proof of Theorem \ref{thm:1}\,\eqref{case:thm1:2}]
Recall that $p,\gamma\in(0,\infty)$.
Let $\lambda>0$ be the constant  in  condition \asptwo, 
and $f,g$ be bounded continuous and nonnegative functions.


Applying the strong Markov property of $\xi$ at $\tau_{x}^{+}$, we have
\begin{equation}\label{eqn:caseb:1}
\bE_{1}^{\uparrow}\Big(
f\big(\frac{\varphi(\xi(\tau_{x}^{+}))}{\varphi(x)}\big)
\cdot
g\big(\frac{J(\tau_{x}^{+})}{\varphi(\xi(\tau_{x}^{+}))}\big)\Big)
= \bE^{\uparrow}_{1}\Big(
f\big(\frac{\varphi(\xi(\tau_{x}^{+}))}{\varphi(x)}\big)\cdot G(\xi(\tau_{x}^{+}))\Big).
\end{equation}
Denote by $\mathring{\xi}$ an independent copy of $\xi$ with probability law of $\mathring{\bP}$ and define for $z>0$
\[G(z):=\mathring{\bE}\Big(
g\Big(\int_{0}^{\infty}
\frac{\omega(\mathring{\xi}_{t}+z)}{\varphi(z)}\,dt\Big)\Big|
\mathring{\tau}_{-z}^{-}=\infty\Big).\]

Let $\mathring{L}(y,t)$ be the local time of $\mathring{\xi}$ at level $y$ and time $t$.
Since $\mathring{\xi}(t)\to\infty$,
we have from \eqref{eqn:ky} that $\D \int_{0}^{\infty}e^{-\lambda \mathring{\xi}_{t}}dt<\infty$ 
$\mathring{\bP}$-a.s.
Applying Theorem I.20 of \cite{Bertoin96:book}, Lemma \ref{lem:4}
and the fact that $\D \big|\inf_{t>0}\mathring{\xi}_{t}\big|<\infty$,
one can check that, for $2\alpha<\lambda \wedge p$,
$\mathring{L}(y,\infty)$ fulfills the conditions of Lemma \ref{lem:3} $\mathring{\bP}$-a.s..
 Therefore,
\begin{align*}
&\ \int_{0}^{\infty}
\frac{\omega(\mathring{\xi}_{t}+z)}{\varphi(z)}\,dt
=\int_{\mathbb{R}}\frac{\omega(z+y)\mathring{L}(y,\infty)}{\varphi(z)}\,dy\\
\underset{z\to\infty}{\longrightarrow}&\ \lambda\gamma \int_{\mathbb{R}}e^{-\lambda y}\mathring{L}(y,\infty)\,dy
=\lambda\gamma \int_{0}^{\infty} e^{-\lambda \mathring{\xi}_{t}}\,dt,
\quad \text{$\mathring{\bP}$-a.s}.
\end{align*}
Moreover, since $\mathbf{1}(\mathring{\tau}_{-z}^{-}=\infty)\to 1$ $\mathring{\bP}$-a.s. as $z\to\infty$,
by the dominated convergence theorem,
\[
G(z)=\frac{1}{1-e^{-pz}}
\mathring{\bE}\Big(g\big(\int_{0}^{\infty}
\frac{\omega(\mathring{\xi}_{t}+z)}{\varphi(z)}\,dt\big)\cdot \mathbf{1}(\mathring{\tau}_{-z}^{-}=\infty)\Big)\\
\underset{z\to\infty}{\longrightarrow}
\mathring{\bE}\Big( g\big(\lambda\gamma \int_{0}^{\infty} e^{-\lambda \mathring{\xi}_{t}}\,dt\big)\Big).
\]

On the other hand, by the uniform converge theorem for $\varphi$, see
Theorem 1.5.2 of \cite{Bingham1987:book}, we have
$\D {\varphi(x+y)}/{\varphi(x)}\to e^{-\lambda y}$ as $x\to\infty$,
uniformly for $y\in[0,\infty)$.

Applying \eqref{eqn:defn:rho} and the facts above to \eqref{eqn:caseb:1}, we complete the proof.
\end{proof}


For the proof of Theorem \ref{thm:2}, we follow the same idea from \cite{Foucart2019} and \cite{Bansaye16}.
\begin{proof}[Proof of Theorem \ref{thm:2}]
The theorem is proved by first claiming that under $\bP_{1}^{\uparrow}$,
\begin{equation}\label{eqn:thm2:2}
\frac{\bar{X}(T^{+}_{\infty}-t)}{\varphi^{-1}(t)}
\stackrel{D}{\Longrightarrow} 1
\quad\text{as $t\to0+$},
\end{equation}
recalling that $\bar{X}(t):=\sup_{s\in[0,t]}$ represents the running maximum of $X$.
The desired conclusion then follows from Lemma \ref{lem:5}.

We first prove statement \eqref{case:thm2:1}. 
For any $h>1$, we take a constant $\D c_{h}\in \Big(1,\liminf_{y\to\infty}\frac{\varphi(y)}{\varphi(hy)}\Big)$.
By the result of statement \ref{case:thm1:1} of Theorem \ref{thm:1}, for any $\varepsilon>0$
there is $k_{3}>0$ such that for $x>k_{3}$,
\begin{equation}\label{eqn:thm2:3}
\bP_{1}^{\uparrow}
\Big(\frac{J(\tau_{x}^{+})}{\varphi(x)}
\notin \big(c_{h}^{-1}, c_{h}\big)\Big)
\leq \varepsilon
\quad\text{and}\quad
1<c_{h}\leq \frac{\varphi(x)}{\varphi(hx)}.
\end{equation}
Set $t_{0}:=\varphi(h k_{3})$.
For $t<t_{0}$, define $\alpha(t):=h\cdot \varphi^{-1}(t)$ and $\beta(t):=\varphi^{-1}(t)/h$, then $\alpha(t)>\varphi^{-1}(t)>\beta(t)>k_{3}$. By the second inequality in \eqref{eqn:thm2:3},
\[
\frac{\varphi(\varphi^{-1}(t))}{\varphi(\beta(t))}\leq c_{h}^{-1}<
1<c_{h}\leq \frac{\varphi(\varphi^{-1}(t))}{\varphi(\alpha(t))}.
\]
Then we further have from $\beta(t)>k_{3}$ that the following inequalities hold,
\begin{align*}
\bP_{1}^{\uparrow}\big(J(\tau_{\alpha(t)}^{+})\geq t\big)
=&\ \bP_{1}^{\uparrow}\Big(\frac{J(\tau_{\alpha(t)}^{+})}{\varphi(\alpha(t))}
\geq \frac{\varphi(\varphi^{-1}(t))}{\varphi(\alpha(t))}\Big)
\leq \bP_{1}^{\uparrow}\Big(\frac{J(\tau_{\alpha(t)}^{+})}{\varphi(\alpha(t))}\geq c_{h}\big)
\leq \varepsilon,\\
 \bP_{1}^{\uparrow}\big(J(\tau_{\beta(t)}^{+})\leq t\big)
=&\ \bP_{1}^{\uparrow}\Big(\frac{J(\tau_{\beta(t)}^{+})}{\varphi(\beta(t))}
\leq \frac{\varphi(\varphi^{-1}(t))}{\varphi(\beta(t))}\Big)
\leq \bP_{1}^{\uparrow}\Big(\frac{J(\tau_{\beta(t)}^{+})}{\varphi(\beta(t))}\leq c_{h}^{-1}\big)
\leq \varepsilon,
\end{align*}
 which, since $\varphi(\varphi^{-1}(t))=t$, gives for $t<t_{0}$
\[
\bP_{1}^{\uparrow}\big(h^{-1}\leq \frac{\bar{X}(T^{+}_{\infty}-t)}{\varphi^{-1}(t)}\leq h\big) \\
= \bP_{1}^{\uparrow}\big(J(\tau_{\alpha(t)}^{+})\leq t\leq J(\tau_{\beta(t)}^{+})\big)\geq 1-2\varepsilon.
\]
We can prove the weak limit \eqref{eqn:thm2:2} by first letting $\varepsilon\rightarrow 0+$ and then letting $h\rightarrow 1+$.

For statement \eqref{case:thm2:2},
notice that for every $k_{4}>0$ and large $x$
\[
\frac{\varphi(x)}{\varphi(hx)}\geq \frac{\varphi(x)}{\varphi(x+k_{4})}\to e^{\lambda k_{4}}
\quad\text{as $x\to\infty$}.
\]
Thus, we always have $\D \liminf_{y\to\infty} \frac{\varphi(y)}{\varphi(hy)}=\infty$ for all $h>1$ in this case. Since $\D {J(\tau_{x}^{+})}/{\varphi(x)}$ converges in law to a random variable on $(0,\infty)$,
there exist $M>1$ and $k_{5}>0$ such that
\[
\bP^{\uparrow}_{1}\Big(\frac{J(\tau_{x}^{+})}{\varphi(x)}
\notin (\frac{1}{M}, M)\Big)\leq \varepsilon
\quad\text{and}\quad
\quad 1< M \leq \frac{\varphi(x)}{\varphi(hx)}
\quad\text{for all $x>k_{5}$},
\]
which can be compared with \eqref{eqn:thm2:3}.
The same argument as in the previous case can be applied to prove \eqref{eqn:thm2:2}.
Applying the result of \eqref{c} in Proposition \ref{prop:1}, we finish the proof.
\end{proof}


%

\end{document}